\DeclareMathAlphabet{\mathpzc}{OT1}{pzc}{m}{it}
\newtheorem{theorem}{Theorem}[section]
\newtheorem{lemma}[theorem]{Lemma}
\newtheorem{corollary}[theorem]{Corollary}
\newtheorem{proposition}[theorem]{Proposition}
\theoremstyle{remark}
\newtheorem{remark}[theorem]{Remark}
\numberwithin{equation}{section}
\DeclareMathOperator{\R}{{\mathbf R}}
\DeclareMathOperator{\Ric}{Ric}
\DeclareMathOperator{\dist}{dist}
\let\vol\Vol
\newcommand{\dvg}{\, d\mu_{g_0}}
\newcommand{\dvSn}{\, d\mu_{\mathbb S^n} }
\newcommand{\Po}{\mathbf{P}_0}
\newcommand{\PSn}{\mathbf{P}_{\mathbb S^n}}
\renewcommand{\P}{\mathbf{P}}
\let\epsilon\varepsilon
\newcommand{\Cscr}{\mathscr C}
\newcommand{\Escr}{\mathscr E}
\newcommand{\Lscr}{\mathscr L}
\newcommand{\Uscr}{\mathscr U}
\def\@cite#1#2{[\textbf{#1}\if@tempswa, #2\fi]}
\def\XXint#1#2#3{{\setbox0=\hbox{$#1{#2#3}{\int}$}
 \vcenter{\hbox{$#2#3$}}\kern-.5\wd0}}
\begin{document}

\title[Prescribed $Q$-curvature flow ]{Prescribed $Q$-curvature flow on closed manifolds of even dimension}

\def\cfac#1{\ifmmode\setbox7\hbox{$\accent"5E#1$}\else\setbox7\hbox{\accent"5E#1}\penalty 10000\relax\fi\raise 1\ht7\hbox{\lower1.0ex\hbox to 1\wd7{\hss\accent"13\hss}}\penalty 10000\hskip-1\wd7\penalty 10000\box7 }

\author[Q.A. Ng\^o]{Qu\cfac oc Anh Ng\^o}

\address[Q.A. Ng\^o]{Department of Mathematics, College of Science, Vi\^{e}t Nam National University, H\`{a} N\^{o}i, Vi\^{e}t Nam and Graduate School of Mathematical Sciences, The University of Tokyo, 3-8-1 Komaba, Meguro-ku, Tokyo 153-8914, Japan.}
\email{\href{mailto: Q.A. Ng\^o <nqanh@vnu.edu.vn>}{nqanh@vnu.edu.vn}}
\email{\href{mailto: Q.A. Ng\^o <ngo@ms.u-tokyo.ac.jp>}{ngo@ms.u-tokyo.ac.jp}}

\author[H. Zhang]{Hong Zhang}

\address[H. Zhang]{Department of Mathematics, University of Science and Technology of China, No. 96 Jin Zhai Road, Hefei, Anhui, China 230026 and School of Mathematics, South China Normal University, Guangzhou, China 510631.}
\email{\href{mailto: H. Zhang <matzhang@ustc.edu.cn>}{matzhang@ustc.edu.cn}}
\email{\href{mailto: H. Zhang <zhanghong@scnu.edu.cn>}{zhanghong@scnu.edu.cn}}

\thanks{}

\subjclass[2010]{Primary 53C44; Secondary 35J30 }

\keywords{$Q$-curvature, negative gradient flow, closed manifolds, even dimension}

\date{\bf \today \ at \currenttime}

\begin{abstract}
On a closed Riemannian manifold $(M,g_0)$ of even dimension $n \geq 4$, the well-known prescribed $Q$-curvature problem asks whether or not there is a metric $g$ comformal to $g_0$ such that its $Q$-curvature, associated with the GJMS operator $\P_g$, is equal to a given function $f$. Letting $g = e^{2u}g_0$, this problem is equivalent to solving
\[
\P_{g_0} u+Q_{g_0} = f e^{nu},
\]
where $Q_{g_0}$ denotes the $Q$-curvature of $g_0$. The primary objective of the paper is to introduce the following negative gradient flow of the time dependent metric $g(t)$ conformal to $g_0$,
\[
\frac{\partial g (t)}{\partial t}= -2\Big(Q_{g (t)} - \frac{\int_M f Q_{g(t)} d\mu_{g(t)} }{\int_M f^2 d\mu_{g(t)} }f \Big)g(t) \quad \text{ for } t >0,\\
\]
to study the problem of prescribing $Q$-curvature. Since $\int_M Q_g d\mu_g$ is conformally invariant, our analysis depends on the size of $\int_M Q_{g_0} \dvg$, which is assumed to satisfy
\[
\int_M Q_0 \dvg \ne k (n-1)! \, {\rm vol}(\mathbb S^n) \quad \text{ for all } \; k = 2,3,...
\]
The paper is twofold. First, we identify suitable conditions on $f$ such that the gradient flow defined as above is defined to all time and convergent, as time goes to infinity, sequentially or uniformly. Second, we show that various existence theorems for prescribed $Q$-curvature problem can be derived from the convergence of the flow. 
\end{abstract}

\maketitle


\section{Introduction}

On a closed manifold $(M, g )$ of dimension $n \geqslant 3$, a formally self-adjoint geometric differential operator $A_g$ of the metric $g$ is called \textit{conformally covariant of bidegree $(a,b)$} if
\begin{equation}\label{eqCCB}
A_{g_w} (\varphi) = e^{-b w} A_g (e^{aw} \varphi)
\end{equation}
for all $\varphi \in C^\infty (M)$, where $g_w := e^{2w} g $ is a conformal metric to $g$. A typical geometric differential operator in conformal geometry is the \textit{second-order conformal Laplacian} which is defined by 
\[
L_g ( \varphi ):= -\Delta_g\varphi + \frac{n-2}{4(n-1)} R_g \varphi ,
\]
where $R_g$ is the scalar curvature of $g$. Clearly, $L_g$ is conformally covariant of bidegree $((n-2)/2,(n+2)/2)$ in the sense of \eqref{eqCCB} since
\begin{equation}\label{eqConformalLaplacian}
L_{g_w} (\varphi)=e^{-\frac{(n+2)w}2} L_g (e^{\frac{(n-2)w}2} \varphi)
\end{equation}
for all $\varphi \in C^\infty (M)$. If we write $u^{4/(n-2)} = e^{2w}$, then Eq. \eqref{eqConformalLaplacian} reads
\[
L_{g_w} (\varphi)=u^{-\frac{n+2}{n-2}} L_g (u \varphi)
\]
for all $\varphi \in C^\infty (M )$. By setting $\varphi \equiv 1$, we get, under the conformal change of metric $g_u = u^{4/(n-2)} g$, the transformation law of the scalar curvature $R_g$ 
\begin{equation}\label{eqConformalChangeOfScalarCurvature}
-\frac{4(n-1)}{n-2} \Delta_g u + R_g u = R_{g_u} u^\frac{n+2}{n-2}.
\end{equation}
 In the literature, Eq. \eqref{eqConformalChangeOfScalarCurvature} is closely related to the prescribed scalar curvature problem with the prescribed function $R_{g_u}$. This challenging problem has already captured much attention by many mathematicians during the last few decades. Notice that when $R_{g_u}$ is constant, the prescribed scalar curvature problem becomes the famous Yamabe problem. 

The first higher order example of conformal operators, discovered by Paneitz \cite{Paneitz}, is the \textit{fourth-order Paneitz operator} $\P_g^4$ on a $4$-manifold $(M,g)$. It is given by
\begin{equation*}
\P^4_g (\varphi) =(-\Delta_g)^2\varphi -\mbox{div}_g\Big( \big(\frac23R_g g -2\Ric_g \big)d\varphi \Big),
\end{equation*}
where $d$ is the differential and $\Ric_g$ denotes the Ricci tensor. (Note that the superscript $4$ does not denote the dimension of the underlying manifold but the order of the operator. However, the subscript $g$ does denote the metric $g$ being used.) The Paneitz operator $\P_g^4$ is conformally covariant of bidegree $(0,4)$ in the sense of \eqref{eqCCB} because under the conformal change $g_u=e^{2u}g$, there holds
\begin{equation}\label{eqConformalPaneitz}
\P^4_{g_u} (\varphi)=e^{-4u} \P^4_g (\varphi)
\end{equation}
for all $\varphi \in C^\infty (M)$. Upon letting $\varphi \equiv 1$, we deduce from \eqref{eqConformalPaneitz} that
\[
Q_{g_u}^4=e^{-4u}(\P^4_g (u)+Q_g^4),
\]
where the quantity $Q_g^4$ is defined by
\[
Q_g^4=-\frac16(\Delta R_g-R_g^2+3|\Ric_g|^2).
\]
The quantity $Q_g^4$, discovered by Branson \cite{Branson85}, is called the $Q$-curvature associated with the Paneitz operator $\P^4$ on $(M,g)$. One of interesting features of the $Q$-curvature is that it obeys, by the Gauss--Bonnet--Chern theorem, the following identity
\[
\int_M Q_g^4+\frac14 |W_g|_g^2~d\mu_g=8\pi^2\chi(M),
\]
where $\chi(M)$ is the Euler characteristic of $M$ and $W_g$ is the Weyl tensor. Since the Weyl tensor $W_g$ is conformally invariant, the identity above implies that the total $Q$-curvature $\int_M Q_g^4 d\mu_g$ is also conformally invariant; that is the value $\int_M Q_g^4 d\mu_g$ does not depend on the metric $g$ but the conformal class $[g]$ represented by $g$. Therefore, the problem of seeking solutions to \eqref{eqConformalPaneitz} is split into several cases depending on the value of $\int_M Q_g^4 d\mu_g$. Serving as Aubin's threshold in the Yamabe problem, there is a threshold $16\pi^2$ for $\int_M Q_g^4 d\mu_g$ in the $Q$-curvature problem on the 4-manifold $M$. (The number $16\pi^2$, which is $3! \vol (\mathbb S^4)$, is exactly equal to $\int_{\mathbb S^4} Q_{g_{\mathbb S^4}}^4 d\mu_{g_{\mathbb S^4}}$ being calculated with respect to the standard metric $g_{\mathbb S^4}$ on $\mathbb S^4$.) The technical condition $\int_M Q_g^4 d\mu_g < 16\pi^2$ is often used since the standard variational argument can be applied. Notice that such a condition is not so restrictive because there is a large class of satisfied manifolds; see \cite{Gursky}. 

For general even dimension $n\geqslant 4$, Graham, Jenne, Mason and Sparling \cite{GJMS} discovered a similar operator $\P_g^n$ which is also a conformally invariant, elliptic, self-adjoint operator of order $n$ with leading term $(-\Delta)^{n/2}$. This operator is commonly known as the GJMS operator. The work \cite{GJMS} was based on an earlier one by Fefferman and Graham \cite{fg1985}, in which existence of scalar conformal invariants were derived via a new tool called ambient metric construction. Similar to \eqref{eqConformalPaneitz}, the GJMS operator is conformally covariant of bidegree $(0,n)$ in the sense that under the conformal change $g_u=e^{2u}g$, there holds
\begin{equation*}
\P^n_{g_u} (\varphi)=e^{-nu} \P^n_g (\varphi)
\end{equation*}
for all $\varphi \in C^\infty (M )$. As in the fourth order case, the higher order operator $\P^n_g$ has an associated curvature quantity $Q^n_g$; see \cite[page 11]{Branson93}. The quantity $Q^n_g$ obeys a similar transformation law as $Q^4_g$ does, namely
\begin{equation}\label{qcurvartureequation}
Q_{g_u}^n=e^{-nu}(\P^n_g u+Q_g^n).
\end{equation}
An immediate consquence of \eqref{qcurvartureequation} is that $\int_M Q_{g_u}^n d\mu_{g_u}$ is conformal invariant.

Up to this point, we can describe \textit{the prescribed $Q$-curvature problem} on a generic manifold of even dimension. Assume that $(M ,g_0)$ is a closed Riemannian manifold of even dimension $n$ with background metric $g_0$ and that $f$ is a smooth function on $M$. Then one may ask if there exists a pointwise conformal metric $g$, that is $g=e^{2u}g_0 \in [g_0]$ for some smooth function $u$, such that $f$ can be realized as the $Q$-curvature of $g$. Now, thanks to the rule \eqref{qcurvartureequation}, this geometric problem is equivalent to solving the higher order nonlinear equation
\begin{equation}\label{prescribeqcurvatureequation}
 \Po u+Q_0=fe^{nu},
\end{equation}
where we simply set $\Po := \P_{g_0}^n$ and $Q_0:= Q_{g_0}^n$. In the sequel, we also set $\P_g:=\P_g^n$ and $Q_g:=Q_g^n$ for brevity. Back to \eqref{prescribeqcurvatureequation}, if the equation has a solution $u$, then integrating both sides of the equation gives
\[
 \int_M Q_0 \dvg = \int_M f e^{nu} \dvg.
\]
Consequently, the following conditions, corresponding to the value of $\int_MQ_0 \dvg$, on the smooth function $f$ are necessary for the existence of solutions to \eqref{prescribeqcurvatureequation}
\begin{equation}\label{NecessaryCondition}
\left.
\begin{aligned}
&\sup_{x \in M} f(x)>0 & \mathrm{ if } \displaystyle \int_M Q_0 \dvg>0,\\
&\sup_{x \in M} f(x)>0 \quad \mathrm{ and } \quad \inf_{x\in M}f(x)<0 &\mathrm{ if } \displaystyle \int_M Q_0 \dvg=0,\\
& \inf_{x \in M} f(x)<0 & \mathrm{ if } \displaystyle \int_M Q_0 \dvg<0.\\
\end{aligned}
\right\}
\end{equation}
In the higher dimensional cases, the number $(n-1)!\vol (\mathbb S^n)$ is the threshold, which plays a similar role as that of $16\pi^2$ on 4-manifolds. As a result, we call Eq. \eqref{prescribeqcurvatureequation} is in the \textit{subcritical case}, \textit{critical case}, or the \textit{supercritical case} accordingly if $\int_M Q_0 \dvg <, =,~\mbox{or}~> (n-1)!\vol(\mathbb S^n)$. For convenience, we also call Eq. \eqref{prescribeqcurvatureequation} is in the \textit{negative case} or the \textit{null case} if $\int_M Q_0 \dvg<~\mbox{or}~=0$, respectively.

Within the last few decades, prescribed curvature problems hold a central place in conformal geometry staring from the famous Yamabe problem. Because the class of lower-order problems has already studied and a general picture is now quite clear, people have already turned their attention to higher-order problems especially during the last two decades. There are many research works on the Eq. \eqref{prescribeqcurvatureequation}, some of them are listed as follows: {\itshape subcritical case}: \cite{br, bfr, bfr09}; {\itshape critical case}: \cite{wx1998, br03, br06, ms2006, ChenXu-2011, ho2012, lll2012}; and {\itshape supercritical case}: \cite{n2007, dm2008, bfr09, n2015}. The majority of these works make use of an assumption on the operator $\Po$, that is,
\begin{equation}\label{P0}
\text{the operator } \Po~~\text{is positive with kernel consisting of the constant functions}.
\tag{P}
\end{equation}
It is possible to work with a weaker assumption on $\Po$, for instance, one may relax the positivity of $\Po$ as considered in \cite{bfr09}. In this sense, the operator $\Po$ as finite many negative eigenvalue and the analysis is rather involved. For example, the Adams inequality \eqref{eqAdamsTypeInequality} is only valid for any function $u$ with zero projection onto the eigenspace spanned by negative eigenvalues of $\Po$. In the present work, we do not pursue this direction and leave it for future research. 

Among those works above, in particular, Brendle \cite{br} considered the evolution of metrics
\begin{equation}\label{flowbrendle}
\left\{
\begin{aligned}
\frac{\partial g (t)}{\partial t} & = -\Big (Q_{g (t)} - \frac{\int_MQ_0 \dvg }{\int_Mf \dvg }f\Big)g(t) & & \text{ for } t >0,\\
g(0) & = e^{2u_0} g_0,& &
\end{aligned}
\right.
\end{equation}
where $f$ is assumed to be positive everywhere and $u_0$ is a smooth initial data. Then he proved, in the subcritical case, that if the condition \eqref{P0} holds, the flow \eqref{flowbrendle} has a solution which is defined to all time and converges at infinity to a metric $g_\infty$ with
\[
Q_{g_\infty} = \frac{ \int_MQ_0 \dvg }{ \int_Mf \dvg } f.
\]
A direct conclusion of the above is that after shifting by a constant, 
Eq. \eqref{prescribeqcurvatureequation} has a solution in the negative case (with $f$ replaced by $-f$ in Eq. \eqref{prescribeqcurvatureequation} in this case) or if $0<\int_M Q_0 \dvg < (n-1)!\vol(\mathbb S^n)$. Notice that by the positivity of $\Po $ mentioned in the hypothesis \eqref{P0} we mean
\begin{equation}\label{eqPositivityOfP_0}
\int_M u \cdot \Po u \dvg \geqslant \lambda_1 \int_M (u-\overline u)^2 \dvg \geqslant 0
\end{equation}
for all function $u \in C^\infty (M)$. Here $\lambda_1 > 0$ is the first (non-trivial) eigenvalue of the operator $\Po $ and
\[
\overline u = \frac 1{\vol(M,g_0)} \int_M u \dvg
\] 
denotes the average of $u$ calculated with respect to $g_0$. (In the rest of our paper, by $\vol(K)$ we mean the volume of a subset $K$ of $M$ being calculated with respect to $g_0$. If we want to emphasize that the metric $g$ is being used to calculate the volume of $K$, we shall write $\vol(K,g)$.) Furthermore, we also drop the symbol $g_0$ in any functional space if the backgroup metric $g_0$ is being used, for examples, $L^2(M) : = L^2(M, g_0)$, $H^{n/2}(M) := H^{n/2} (M, g_0)$, etc. 

Later on, Baird et al. \cite{bfr} extended Brendle's result to the case in which the candidate curvature $f$ is allowed to change sign when the underlying manifold has dimension 4. They adopt an abstract negative gradient flow which is different from \eqref{flowbrendle}. To be precise, they considered the functional
\begin{equation*}
 J[u]=\int_M u\cdot \Po u \dvg +2\int_MQ_0u \dvg ,
\end{equation*}
on the Sobolev space $H^2(M)$ under the constraint
\[
u\in X=: \Big\{u\in H^2(M): \int_Mfe^{4u}\dvg =\int_MQ_0 \dvg \Big\}.
\]
Then, they studied the following negative gradient flow of functional $J$ with respective to hypersurface $X$ of Sobolev space $H$
\begin{equation}\label{flowpaulbaird}
 \left\{
\begin{aligned}
 \partial_tu& =-\nabla^XJ(u)\\
 u(0) & =u_0\in X,
 \end{aligned}
\right.
\end{equation}
where $\nabla^XJ$ is the projection of $\nabla J$ onto $X$. If the flow \eqref{flowpaulbaird} exists for all time and converges at infinity, then the limit function $u_\infty$ yields a solution to \eqref{prescribeqcurvatureequation} with $n=4$.

The primary objective of the paper is to generalize the results in \cite{bfr} to all even dimensional manifolds and to further extend the work \cite{br} to the supercritical case. In an earlier version of this paper \cite{NZ-arXiv}, we only consider the case $\int_M Q_0 \dvg \leqslant (n-1)!\vol(\mathbb S^n)$. The method used in \cite{NZ-arXiv} is again the negative gradient flow which is analogous to \eqref{flowbrendle}; see also \cite{NX2015}. In the present version, we relax the restriction of $\int_M Q_0 \dvg$; instead, we only require that $\int_M Q_0 \dvg \ne k (n-1)!\vol(\mathbb S^n)$ for all $k \in \{2,3,...\}$. We keep following the strategy used in \cite{NZ-arXiv}. However, in order to deal with arbitrarily large $\int_M Q_0 \dvg$, we shall exploit the connection between $L^2$-bound and the finite energy of the flow, which was already observed in \cite{fr18} for the case of constant $Q$-curvature; see Section \ref{sec-BoundInH^(n/2)}. For examples of manifolds having $\int_M Q_0 \dvg > (n-1)!\vol(\mathbb S^n)$, we refer the reader to the work of Djadli and Malchiodi \cite{dm2004}.

Now, let us describe our flow method in detail and state the main results of the paper. As in \cite{bfr}, we associate any function $u\in H^{n/2} (M )$ the following energy
\begin{equation}\label{functional}
\Escr [u]=\frac n2\int_Mu\cdot \Po u \dvg + n\int_M Q_0u \dvg.
\end{equation}
We assume the condition \eqref{P0}. Since $\Po$ has the leading term $(-\Delta)^{n/2}$, by adopting the convention that $(-\Delta_0 )^{m+1/2} = \nabla_0 (-\Delta_0 )^m $ for all integers $m$, it is not hard to see that for any $u\in H^{n/2} (M )$ we have
\begin{equation}
 \label{eqBoundForIntegralOfuP_ou}
 \int_M |(-\Delta_0)^{n/4} u|^2 \dvg \lesssim \int_Mu\cdot \Po u \dvg \lesssim \int_M |(-\Delta_0)^{n/4} u|^2 \dvg .
\end{equation}
 Here and thereafter, we sometimes use $\lesssim$ to denote an inequality up to a uniform constant which only depends on the geometry of $(M,g_0)$. Notice that by \cite[Proposition 2]{FRobert} we can define an equivalent norm for $H^{p} (M )$ 
\begin{equation}\label{eqNormH^p}
\|u\|_{H^p(M )}^2 = \int_M |(-\Delta_0)^{p/2} u|^2 \dvg + \int_M u^2 \dvg.
\end{equation}
However, in view of \eqref{eqBoundForIntegralOfuP_ou}, we have another convenient and equivalent norm on $H^{n/2} (M )$ 
\begin{equation}\label{eqNormH^n/2}
\| u\| _{H^{n/2}(M )}^2=\int_Mu\cdot \Po u \dvg +\int_Mu^2 \dvg .
\end{equation}
Now, we define our negative gradient flow for the functional \eqref{functional} as follows
\begin{equation}\label{metricflow}
\partial_t g(t)=-2(Q_{g(t)}-\lambda (t) f)g(t),
\end{equation}
where the parameter $\lambda$ is chosen to fix the quantity $\int_Mfd\mu_{g(t)}$ for all time. A direct calculation for $\partial_t \int_Mfd\mu_{g(t)} = 0$ shows that
\begin{eqnarray}\label{derivativeofintegraloff}
\int_Mf(\lambda (t) f-Q_{g(t)})d\mu_{g(t)} = 0.
\end{eqnarray}
Hence, $\lambda$ is given by
\begin{equation}\label{lambda}
 \lambda (t) =\frac{\int_MfQ_{g(t)}d\mu_{g(t)}}{\int_Mf^2d\mu_{g(t)}}.
\end{equation}
One may easily find that the fundamental difference between our flow \eqref{metricflow} and Brendle's flow \eqref{flowbrendle} is the choice of parameter $\lambda(t)$. Brendle uses $\lambda (t) =\int_M Q_0 \dvg / \int_M f \dvg$ to keep the volume $\int_M d\mu_{g(t)}$ unchanged, while we use ours to keep the quantity $\int_M f d\mu_{g(t)}$ unchanged. As a result, given an initial metric $g(0)$ one has
\begin{equation}\label{FixIntoff}
 \int_Mfd\mu_{g(t)}=\int_Mfd\mu_{g(0)} \quad \text{for all } t>0.
\end{equation}
Since the evolution equation \eqref{metricflow} preserves the conformal structure, we may write $g(t) = e^{2u(t)} g_0$ for some real-valued function $u(t)$. Then, the evolution equation for metrics $g(t)$ can be transformed into one for $u(t)$, namely
\begin{equation}\label{eqFlow}
\left\{
\begin{aligned}
\frac{\partial u}{\partial t} &=\lambda (t) f - Q_{g(t)} & & \text{ for } t > 0,\\
u(0) &= u_0, & &
\end{aligned}
\right.
\tag{F}
\end{equation} 
where and throughout this paper, by the initial data $u_0$ we mean a smooth function chosen in such a way that 
\begin{equation} \label{Initialdata}
\left.
\begin{aligned}
& u_0\in Y &\mbox{if}~\int_M Q_0 \dvg \leqslant (n-1)!\vol(\mathbb S^n), \\
& u_0\in C^\infty(M) &\mathrm{ if }~\int_M Q_0 \dvg>(n-1)!\vol(\mathbb S^n),
\end{aligned}
\right\}
\end{equation}
where
\[
Y=:\Big\{u\in H^2(M): \int_Mfe^{nu} \dvg =\int_MQ_0 \dvg \Big\}.
\]

We are ready to state our first result concerning the long-time existence and convergence of the flow \eqref{eqFlow} in the \textit{non-critical} case. In what follows, we set $h^+ = \max \{ h, 0\}$ and $h^- = \max\{-h, 0\}$ for any function $h$ on $M$. 

\begin{theorem}\label{Noncriticalcase}
Let $(M, g_0)$ be a compact, oriented $n$-dimensional Riemannian manifold with $n$ even. Assume that the GJMS operator $\Po $ is positive with kernel consisting of constant functions and the metric $g_0$ satisfies
\[
\int_M Q_0 \dvg \ne k(n-1)!\vol(\mathbb S^n) \quad \text{ for all } k \in \{ 1,2,...\}.
\]
Let the initial data $u_0$ be satisfied \eqref{Initialdata}. We also let $f$ be a non-trivial smooth function $f$ satisfying \eqref{NecessaryCondition}. In addition, we further assume that, in the negative case, there are positive constants $C_0$ depending on $f^-$ and $(M,g_0)$ and $\tau$ depending only on $(M, g_0)$ such that
\[
\sup_M f^+ \leqslant C_0 \exp \big( -\tau \|u_0\|_{H^{n/2}(M)}^2 \big)
\] 
and that $f > 0$ in the supercritical case. Then, for some $u_0$, the flow \eqref{eqFlow} has a smooth solution on $[0,+\infty)$ and converges sequentially in the sense that there exists a smooth function $u_\infty$ and a real number $\lambda_\infty$ such that for a suitable time sequence $(t_j)_j$ with $t_j\rightarrow+\infty$ as $j\rightarrow+\infty$, there hold
\begin{enumerate}[label=\rm (\roman*)]
 \item $ \| u(t_j)-u_\infty \| _{C^\infty(M )}\rightarrow0$, 
 \item $|\lambda(t_j)-\lambda_\infty|\rightarrow0$, and
 \item $ \| Q_{g(t_j)}-\lambda_\infty f \| _{C^\infty(M )}\rightarrow0$
\end{enumerate} 
 as $j\rightarrow+\infty$. Furthermore, there holds $\lambda_\infty=1$ in the subcritical but non-null case. 
\end{theorem}
\begin{remark}
We will actually prove that the initial data $u_0$ can be freely chosen in $Y$ when the subcritical case is considered. While only suitable $u_0$ can guarantee the convergence of the flow in the supercritical case. 
\end{remark}

Now, let us turn to the critical case, namely $\int_M Q_0 \dvg = (n-1)!\vol(\mathbb S^n)$. In this scenario, we only consider the underlying manifold to be the standard $n$-sphere $\mathbb S^n$, equipped with the standard metric $g_{\mathbb S^n}$. Also, we let $G$ be a group of isometries of $\mathbb S^n$. Then a function $f$ is said to be $G$-invariant if it satisfies
\[
f(\sigma x)=f(x) \quad\mbox{for all }~\sigma\in G~\mbox{and}~x\in \mathbb S^n.
\]
Furthermore, we say that a conformal metric $g$ to $g_{\mathbb S^n}$ is $G$-invariant if $u$ is a $G$-invariant function when $g$ is written as $g=e^{2u}g_{\mathbb S^n}$. Let $\Sigma$ be the set of fixed points of $G$, that is,
\[
\Sigma=\{x\in \mathbb S^n: \sigma x=x~~\mbox{for all}~~\sigma\in G\}.
\]
Inspired by the result of \cite[Theorem 3]{bfr-04} concerning the prescribing scalar curvature on the 2-sphere, our result for the prescribing $Q$-curvature on the $n$-sphere is as follows.

\begin{theorem}\label{Criticalcase}
Let $f$ be a smooth and $G$-invariant function on $ \mathbb S^n$ with $\sup_{x\in \mathbb S^n}f>0$. Also, let $u_0\in Y$ be a $G$-invariant initial data. If 
\begin{enumerate}[label=\rm (\alph*)]
 \item either $\Sigma=\emptyset$ 
 \item or
 \begin{equation} \label{Initialdata1}
 \sup_{x\in\Sigma}f(x)\leqslant(n-1)!\exp\Big(-\frac{\Escr[u_0]}{(n-1)!\vol(\mathbb S^n)}\Big),
 \end{equation}
\end{enumerate} 
then the flow \eqref{eqFlow} has a solution which is defined to all time. In addition, there exists a smooth function $u_\infty$ such that, for a suitable time sequence $(t_j)_j$ with $t_j\rightarrow+\infty$ as $j\rightarrow+\infty$, there hold 
\begin{enumerate}[label=\rm (\roman*)]
 \item $ \| u(t_j)-u_\infty \| _{C^\infty(M )}\rightarrow0$, 
 \item $|\lambda(t_j)-1|\rightarrow0$, and
 \item $ \| Q_{g(t_j)}-f \| _{C^\infty(M )}\rightarrow0$
\end{enumerate} 
 as $j\rightarrow+\infty$. 
\end{theorem}
 
Note that when $M = \mathbb S^n$, in addition to the first condition in \eqref{NecessaryCondition} there is a further necessary condition which states that if \eqref{prescribeqcurvatureequation} has a solution $u$, then $f$ must satisfy
\begin{equation}\label{KW-obstruction}
\int_{\mathbb S^n} \langle \nabla f , \nabla x_j \rangle e^{nu}\dvSn =0
\end{equation}
for all $j=1,2,...,n+1$. The above condition is essentially due to Chang and Yang \cite[page 205]{ChangYang1992}, which is a higher dimensional analogue of the well-known obstruction due to Kazdan and Warner.
 
\begin{remark}
In view of Theorems \ref{Noncriticalcase} and \ref{Criticalcase} above, we do not have the uniform but sequential convergence of the flow \eqref{eqFlow}. This is supported by the fact that ``bubbling'' phenomena of the flow \eqref{eqFlow} may appear, resulting in many different sub-sequential limits of the flow as $t\rightarrow+\infty$. For interested readers, one may refer to the recent work \cite{NZ} on such a situation on $4$-manifolds in the null case, which is a higher-order analogue of a similar situation studied by Struwe \cite{Struwe} for the Gaussian curvature flow on the two dimensional torus.
\end{remark}
 
Although the uniform convergence in time $t$ is not guaranteed in general, it indeed holds true in some special cases. 
 
\begin{theorem}\label{Uniformconvergence}
Assume all hypotheses in Theorems \ref{Noncriticalcase} and \ref{Criticalcase}. If 
\begin{enumerate}[label=\rm (\alph*)]
 \item either $\int_MQ_0 \dvg\ne 0$ and the problem \eqref{prescribeqcurvatureequation} has unique solution
 \item or $\int_MQ_0\dvg <0$ and $f\leqslant0$ everywhere,
\end{enumerate}
then all the convergences in Theorems \ref{Noncriticalcase} and \ref{Criticalcase} are uniform in time, namely
\begin{enumerate}[label=\rm (\roman*)]
 \item $ \| u(t)-u_\infty \| _{C^\infty(M )}\rightarrow0$, 
 \item $|\lambda(t)-\lambda_\infty|\rightarrow0$, and
 \item $ \| Q_{g(t)}-\lambda_\infty f \| _{C^\infty(M )}\rightarrow0$
\end{enumerate} 
as $t\rightarrow+\infty$, and the convergence corresponding to the case (b) is exponentially fast. Finally, there holds $\lambda_\infty=1$ in the subcritical but non-null case or when $M=\mathbb S^n$. 
\end{theorem}

A direct consequence of the convergence of the flow \eqref{eqFlow} is the existence of solutions to the prescribed $Q$-curvature equation \eqref{prescribeqcurvatureequation}.

\begin{corollary}\label{maincorollary}
Let $(M, g_0)$ be a compact, oriented $n$-dimensional Riemannian manifold with $n$ even. Assume that the GJMS operator $\Po $ is positive with kernel consisting of constant functions. Let $f$ be a non-constant, smooth function on $M$. Then, we have the following claims:
\begin{enumerate}[label=\rm (\roman*)]
\item Assume that $$\int_MQ_0 \dvg<0.$$ 
Then there is a positive constant $C_0$ depending on $f^-$ and $(M,g_0)$ such that there exists a conformal metric to $g_0$ with $Q$-curvature $f$ whenever 
\[
\sup_M f^+ \leqslant C_0.
\]

\item Assume that $$\int_MQ_0 \dvg=0.$$ 
Then there exists a conformal metric to $g_0$ with $Q$-curvature $\alpha f$ for $\alpha\in\{-1,1\}$ provided $f$ is sign-changing with $\int_Mfd\mu_{g_0}\neq 0$. Moreover, if 
\[
\int_M f \dvg < 0,
\]
then there exists a conformal metric to $g_0$ with $Q$-curvature $f$, namely $\alpha =1$.

\item Assume that
\[
0<\int_MQ_0 \dvg < (n-1)!\vol(\mathbb S^n).
\]
Then there exists a conformal metric to $g_0$ with $Q$-curvature $f$ if and only if 
$$\sup_M f >0.$$

\item Let $M=\mathbb S^n$ and $f$ be a $G$-invariant function on $\mathbb S^n$ with $\sup_{\mathbb S^n}f>0$. If 
\begin{enumerate}[label=\rm (\alph*)]
 \item either $\Sigma=\emptyset$ 
 \item or there exist $y_0\in\Sigma$ and $r_0>0$ such that 
\end{enumerate}
\begin{equation}\label{SupfonSigma}
\sup_{x\in\Sigma}f(x)\leqslant \frac 1{\vol (\mathbb S^n)}\max\Big\{ \int_{\mathbb S^n}f\circ\phi_{y_0,r_0}\dvSn ,0\Big\},
\end{equation}
then there exists a $G$-invariant conformal metric to $g_0$ with $Q$-curvature $f$. Here $\phi_{y,r}: \mathbb S^n \to \mathbb S^n$ is a conformal diffeomorphism given by
\[
\phi_{y, r} (x) = \pi_y^{-1} ( r \pi_y(x) ),
\] 
where $r>0$, $y \in \mathbb S^n$, and $\pi_y : \mathbb S^n \to \mathbb R^n$ is the stereographic projection from the north pole $y$ to the equatorial plane of $\mathbb S^n$. In particular, upon choosing $r=1$ we conclude that if
$$
\sup_{x\in\Sigma}f(x)\leqslant \frac 1{\vol (\mathbb S^n)}\max\Big\{ \int_{\mathbb S^n}f\dvSn ,0\Big\},
$$
then there exists a $G$-invariant conformal metric to $g_0$ with $Q$-curvature $f$.

\item Assume that $f>0$ and that 
\[
(n-1)!\vol(\mathbb S^n) <\int_MQ_0 \dvg \neq k(n-1)!\vol(\mathbb S^n),~~k=\{2, 3, \dots\}.
\]
Then there always exists a conformal metric to $g_0$ with $Q$-curvature $f$.
\end{enumerate}
\end{corollary}

\begin{remark}
We have the following remarks:
\begin{enumerate}
 \item Our results in Corollary \ref{maincorollary} shares some similarities with other existing results in the literature such as those in \cite{bfr09, fr12} for manifolds of even dimension. Although the configurations used in these two works are similar to us with a weaker condition on the operator $\Po$, the prescribed function $f$ has a fixed sign. The work \cite{bfr09} also considered the supercritical case. However, some sort of symmetric assumption on $f$ has been made in \cite{bfr09} while our $f$ is a generic positive smooth function. Therefore, more or less, Corollary \ref{maincorollary} generalizes certain results in \cite{bfr09, fr12} in several directions.
 
 \item In view of the result in Part (ii), there is a multiple constant $\alpha$ which cannot be removed in general as the sign of $\lambda_\infty$ cannot be determined. However, if limiting ourselves to $4$-mainfolds, then Baird et al. were able to determine the sign of $\lambda_\infty$ under an extra condition; see \cite[Corollary 2.4]{bfr}. Unfortunately, their proof seems to be effective only for $4$-dimension. Later on, Ge and Xu \cite{gx2008} used a variational approach to replace the extra condition in \cite{bfr} by a natural condition $\int_M f \dvg < 0$. In Appendix \ref{apd-GeXu}, we modify the argument of Ge and Xu to give a slightly different proof of existence for this case. Interestingly, this new proof allows us to recover the result of Ge and Xu by using our flow.
 
 \item In Part (iv), the conformal diffeomorphism $\phi_{y,r}$ is well-studied; see \cite[Section 5]{YYLi}. Also, the two inequalities in Part (iv) are sharp. Indeed, let us show the sharpness of 
\[
\sup_{x\in\Sigma}f(x)\leqslant \frac 1{\vol (\mathbb S^n)}\max\Big\{ \int_{\mathbb S^n}f\dvSn ,0\Big\}.
\]
Let 
$$f(x)= \varepsilon x_{n+1}$$ and consider $G$ the group of isometries which fixes the north and south poles of $\mathbb S^n$, namely the two points $(0,...,0, \pm 1) \in \R^{n+1}$. Then the function $f$ is $G$-invariant and
\[
\sup_{x\in\Sigma}f(x) = \varepsilon.
\]
On one hand, because the function $f$ is odd with respect to the origin, we must have
\[
\int_{\mathbb S^n}f\dvSn = 0.
\] 
On the other hand, the function $f$ is not the $Q$-curvature of any metric conformal to $g_{\mathbb S^n}$ by the obstruction \eqref{KW-obstruction} of Kazdan--Warner.
 
 \item The result in Part (v) can be regarded as a generalization of the existence result obtained in \cite{fr18}. We note that in the critical and supercritical cases, the analysis is delicate since blow-up phenomena may occur; see \cite{fr18}. 
\end{enumerate}
\end{remark}

To close this section, we would like to mention the organization of the paper. Section \ref{sec-FlowEquationAndEnergyFunctional} is devoted to a flow of conformal factor $u$ and the energy functional associated with the flow. Sections \ref{sec-BoundInH^(n/2)} and \ref{sec-BoundInH^n} are devoted to $H^{n/2}$- and $H^n$-bounds of $u(t)$ in the maximal interval of existence $[0, T)$. Putting these bounds together and by performing integral estimates, we obtain $H^{2k}$-bounds for $u(t)$ in Section \ref{sec-BoundInH^2k} for any $k >2$. This helps us to conclude that the flow is defined to all time. To study the convergence of the flow, uniform boundedness of the flow is required. We spend Section \ref{sec-GlobalBoundedness} to prove certain global boundedness of the flow. Putting all these preparation together, we study convergence of the flow in Section \ref{sec-Convergence}. Proofs of Theorems \ref{Noncriticalcase} and \ref{Criticalcase} and Corollary \ref{maincorollary} are put in the final section; see Section \ref{sec-ProofsOfMain}

\tableofcontents


\section{The flow equation and its energy functional}
\label{sec-FlowEquationAndEnergyFunctional}

The purpose of this section is to derive a flow equation for the conformal factor $u$. Using this flow equation, we shall show that the energy functional $\Escr$ is non-increasing along the flow. Other properties of the energy functional are also presented. Thanks to \eqref{qcurvartureequation}, we also obtain
\[
\partial_ t u(t) = - \P_{g(t)} u-e^{-nu}Q_0+\lambda (t) f
\]
for any $t > 0$ in the interval of existence, which is non-empty. By an easy calculation, we also obtain the formulas for $\partial_t Q_{g(t)}$ and $\partial_t \lambda (t)$ along the flow as shown below.

\begin{lemma} \label{flowequationsofuandQ}
Suppose that $u(t)$ is a solution to the flow \eqref{eqFlow}. Then, we have
\[
\partial_ t Q_{g(t)}=-\P_{g(t)}(Q_{g(t)}-\lambda (t) f)+nQ_{g(t)}(Q_{g(t)}-\lambda (t) f)
\]
and
\[
\lambda' (t)=\Big (\int_Mf^2d\mu_{g(t)} \Big)^{-1} \int_M \big(\P_{g(t)} f-n\lambda (t) f^2 \big)\big(\lambda (t) f-Q_{g(t)}\big)d\mu_{g(t)},
\]
where, as always, $\P_{g(t)} =e^{-nu}\Po $ is the GJMS operator with respective to the conformal metric $g(t)$.
\end{lemma}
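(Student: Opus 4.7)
The plan is to derive both evolution equations by direct differentiation, using only the definition of $\lambda(t)$, the flow equation \eqref{eqFlow}, the conformal change rule \eqref{qcurvartureequation}, and the self-adjointness of the GJMS operator.

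For the formula for $\partial_t Q_{g(t)}$, I would start from the identity $Q_{g(t)}=e^{-nu(t)}(\Po u(t)+Q_0)$ coming from \eqref{qcurvartureequation}. Differentiating in $t$ and using $\partial_t u=\lambda(t)f-Q_{g(t)}$ gives
\[
\partial_t Q_{g(t)} = -n(\partial_t u)\,e^{-nu}(\Po u+Q_0) + e^{-nu}\Po(\partial_t u) = -n Q_{g(t)}(\partial_t u) + \P_{g(t)}(\partial_t u),
\]
where I have used the conformal covariance $\P_{g(t)}=e^{-nu}\Po$ as operators on functions. Substituting $\partial_t u=\lambda(t)f-Q_{g(t)}$ yields the claimed formula after sign rearrangement.

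For $\lambda'(t)$, I would rewrite \eqref{lambda} in the symmetric form $\lambda(t)\int_Mf^2\,d\mu_{g(t)}=\int_MfQ_{g(t)}\,d\mu_{g(t)}$ and differentiate. Two ingredients are needed: first, the volume form evolves as $\partial_t\,d\mu_{g(t)}=n(\partial_t u)\,d\mu_{g(t)}=n(\lambda f-Q_{g(t)})\,d\mu_{g(t)}$, since $d\mu_{g(t)}=e^{nu}\dvg$; second, the just-derived expression for $\partial_t Q_{g(t)}$. Plugging these in gives
\[
\begin{aligned}
\lambda'\!\int_M f^2\,d\mu_{g(t)} &= \int_M f\,\partial_t Q_{g(t)}\,d\mu_{g(t)} + n\!\int_M fQ_{g(t)}(\lambda f-Q_{g(t)})\,d\mu_{g(t)}\\
&\quad - n\lambda\!\int_M f^2(\lambda f-Q_{g(t)})\,d\mu_{g(t)}.
\end{aligned}
\]
The crucial simplification is the self-adjointness of $\P_{g(t)}$ with respect to $d\mu_{g(t)}$: it lets me move $\P_{g(t)}$ off of $Q_{g(t)}-\lambda f$ onto $f$ in the term $\int_M f\,\P_{g(t)}(Q_{g(t)}-\lambda f)\,d\mu_{g(t)}$. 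After this transposition the $nfQ_{g(t)}^2$ contributions from $\partial_t Q_{g(t)}$ cancel exactly against the $nfQ_{g(t)}(\lambda f - Q_{g(t)})$ term coming from the volume-form variation, and the remaining pieces organize themselves into $\int_M(\P_{g(t)}f-n\lambda f^2)(\lambda f-Q_{g(t)})\,d\mu_{g(t)}$, which upon division by $\int_M f^2\,d\mu_{g(t)}$ is the advertised identity.

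The steps are essentially all routine; the only place where one has to be careful is bookkeeping the sign of each term and invoking self-adjointness at the right moment so that the cubic-in-$Q_{g(t)}$ contributions cancel. A small sanity check is that one can also consistency-check the computation against \eqref{derivativeofintegraloff}: differentiating $\int_M f\,d\mu_{g(t)}$ must vanish by the choice of $\lambda$, and indeed this drops out of the same computation.
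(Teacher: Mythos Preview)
Your proposal is correct and is precisely the routine computation the paper has in mind; the paper itself does not write out the proof but simply states that it is a routine calculation left to the reader. Your outline---differentiating $Q_{g(t)}=e^{-nu}(\Po u+Q_0)$ for the first identity, and differentiating $\lambda(t)\int_M f^2\,d\mu_{g(t)}=\int_M fQ_{g(t)}\,d\mu_{g(t)}$ together with self-adjointness of $\P_{g(t)}$ for the second---is exactly the expected argument, and your bookkeeping checks out (in fact the entire term $n\int_M fQ_{g(t)}(Q_{g(t)}-\lambda f)\,d\mu_{g(t)}$ from $\partial_tQ_{g(t)}$ cancels against $n\int_M fQ_{g(t)}(\lambda f-Q_{g(t)})\,d\mu_{g(t)}$ from the volume-form variation, not just the $nfQ_{g(t)}^2$ part).
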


\begin{proof}
 Since this is just a routine calculation, we leave the proof to the reader.
\end{proof}

\begin{lemma}\label{energydecay}
 Suppose that $u(t)$ is a solution to the flow \eqref{eqFlow}. Then, one has 
\[
\frac{d\Escr [u]}{dt}=-n\int_M(Q_{g(t)}-\lambda (t) f)^2d\mu_{g(t)}.
\]
In particular, the energy functional $\Escr [u]$ is non-increasing along the flow.
\end{lemma}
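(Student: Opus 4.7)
The plan is to differentiate $\Escr[u(t)]$ in $t$ directly and then use three ingredients: the self-adjointness of $\Po$, the conformal change rule \eqref{qcurvartureequation}, and the defining property of $\lambda(t)$ from \eqref{lambda}.

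First I would differentiate term by term. Since $\Po$ is formally self-adjoint and $g_0$ is fixed, the chain rule together with symmetry gives
\[
\frac{d}{dt}\Big(\frac n2 \int_M u\cdot \Po u \dvg\Big)= n\int_M u_t\cdot \Po u \dvg,
\qquad
\frac{d}{dt}\Big(n\int_M Q_0 u \dvg\Big)= n\int_M Q_0 u_t \dvg,
\]
so combining yields
\[
\frac{d\Escr[u]}{dt}= n\int_M u_t\,(\Po u+Q_0) \dvg .
\]

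Next I would convert this into an integral against $d\mu_{g(t)}$. By the conformal change formula \eqref{qcurvartureequation} applied with $g(t)=e^{2u}g_0$, we have $\Po u+Q_0 = e^{nu}Q_{g(t)}$, and $d\mu_{g(t)}=e^{nu}\dvg$. Therefore
\[
\frac{d\Escr[u]}{dt}= n\int_M u_t\, Q_{g(t)} d\mu_{g(t)}.
\]
Now I plug in the flow equation \eqref{eqFlow}, $u_t=\lambda(t) f-Q_{g(t)}$, to obtain
\[
\frac{d\Escr[u]}{dt}= n\int_M \big(\lambda(t) f-Q_{g(t)}\big) Q_{g(t)} d\mu_{g(t)}.
\]

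Finally, I would use the key orthogonality relation coming from the choice \eqref{lambda}, namely \eqref{derivativeofintegraloff}, which reads
\[
\int_M f\,\big(\lambda(t) f-Q_{g(t)}\big) d\mu_{g(t)} = 0.
\]
Multiplying this identity by $n\lambda(t)$ and subtracting it from the expression above produces
\[
\frac{d\Escr[u]}{dt}= n\int_M \big(\lambda(t) f-Q_{g(t)}\big)\big(Q_{g(t)}-\lambda(t) f\big) d\mu_{g(t)} = -n\int_M (Q_{g(t)}-\lambda(t) f)^2 d\mu_{g(t)},
\]
which is the desired identity; non-increasing monotonicity of $\Escr$ follows immediately.

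There is essentially no obstacle in this argument; the only subtle point is remembering that the cross-term with $\lambda(t) f$ automatically vanishes because the flow is designed so that $\lambda(t) f-Q_{g(t)}$ is $L^2(d\mu_{g(t)})$-orthogonal to $f$. That orthogonality is precisely what makes $\lambda(t)$ the correct Lagrange multiplier enforcing the constraint \eqref{constraint}, and it is the conceptual reason the dissipation is a clean negative square.
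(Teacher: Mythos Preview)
Your proof is correct and follows essentially the same route as the paper's: differentiate $\Escr$, use self-adjointness of $\Po$ and the conformal change rule \eqref{qcurvartureequation} to rewrite the derivative as $-n\int_M (Q_{g(t)}-\lambda(t)f)Q_{g(t)}\,d\mu_{g(t)}$, and then invoke the orthogonality \eqref{derivativeofintegraloff} to complete the square. The only cosmetic difference is that the paper adds and subtracts the $\lambda(t)f$ term after writing the integrand, whereas you phrase it as multiplying \eqref{derivativeofintegraloff} by $n\lambda(t)$ and subtracting; these are the same manipulation.
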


\begin{proof}
 It follows from the flow equation \eqref{eqFlow}, \eqref{qcurvartureequation}, and \eqref{derivativeofintegraloff} that
 \begin{align*}
\frac{d\Escr [u]}{dt} =&\frac{n}{2}\int_Mu_t \cdot \Po u+u\cdot \Po u_t \dvg +n\int_MQ_0u_t \dvg \\
 =&-n\int_M(Q_{g(t)}-\lambda (t)f)Q_{g(t)}d\mu_{g(t)}\\
 =&-n\int_M(Q_{g(t)}-\lambda (t)f)^2d\mu_{g(t)}+n\lambda(t) \int_Mf(\lambda (t)f-Q_{g(t)})d\mu_{g(t)}\\
 =&-n\int_M(Q_{g(t)}-\lambda (t) f)^2d\mu_{g(t)}.
 \end{align*}
The proof is complete.
\end{proof}

Before going further, let us recall some important inequalities that will be used in the paper. First, we recall the Gagliardo--Nirenberg interpolation inequality
\begin{equation}\label{eqGagliardoNirenbergInequality}
\int_M |\nabla^j \varphi|^p \dvg \leqslant \Cscr^p \Big( \int_M |\nabla^m \varphi|^r \dvg \Big) ^{ps/r} \Big( \int_M | \varphi |^q \dvg\Big) ^{p(1-s)/q}
\end{equation}
for some $\Cscr>0$ with $0<s< 1 \leqslant q, r \leqslant +\infty$ satisfying $1/p = j/n + (1/r - m/n) \alpha + (1-s)/q$.

Since the operator $\Po $ is self-adjoint and positive with kernel consisting of constant functions, we can apply Adams' inequality \cite[Theorem 2]{adams} to get
\begin{equation}\label{eqAdamsTypeInequality}
\int_M \exp\Big( \frac{n!\vol(\mathbb S^n)}{2}\frac{(u-{\overline u})^2}{\int_Mu\cdot \Po u \dvg } \Big) \dvg \leqslant \Cscr_A
\end{equation}
for some constant $\Cscr_A>0$. A detailed explanation for the validity of \eqref{eqAdamsTypeInequality} can be found, for example, in \cite[page 330]{br}. As a consequence of \eqref{eqAdamsTypeInequality}, we obtain the following Trudinger-type inequality
\[
\begin{split}
\int_M e^{\alpha(u-{\overline u}) } \dvg \leqslant & \Cscr_A \exp \Big( \frac{\alpha^2}{2n!\vol(\mathbb S^n)}\int_Mu\cdot \Po u \dvg \Big)
\end{split}
\]
for all real number $\alpha$. An equivalent form of the Trudinger-type inequality is the following
\begin{equation}\label{TrudingerInequality}
\begin{split}
\int_M e^{\alpha u} \dvg \leqslant & \Cscr_A \exp \Big( \frac{\alpha^2}{2n!\vol(\mathbb S^n)}\int_Mu\cdot \Po u \dvg + \frac{\alpha}{\vol (M)} \int_M u \dvg \Big).
\end{split}
\end{equation}
Since
\[
\frac{\alpha}{\vol (M)} \int_M u \dvg
\leqslant \frac{\alpha^2}{2n!\vol(\mathbb S^n)} \int_M u^2 \dvg +\frac { n! \vol(\mathbb S^n)}{2 \vol (M)},
\]
we obtain from \eqref{TrudingerInequality} the following
\begin{equation}\label{TrudingerInequalityWithHnorm}
\int_M e^{\alpha u} \dvg \leqslant \Cscr_A \exp \Big(\frac { n! \vol(\mathbb S^n)}{2 \vol (M)} \Big) 
\exp \Big( \frac{\alpha^2}{2n!\vol(\mathbb S^n)} \|u\|_{H^{n/2}(M)}^2\Big).
\end{equation}
Finally, when working on $ \mathbb S^n$, instead of using Adams' inequality \eqref{eqAdamsTypeInequality}, we shall use the following sharp Beckner's inequality
\begin{equation}\label{BecknerInequality}
\begin{split}
\frac 1{\vol(\mathbb S^n)} \int_{ \mathbb S^n} e^{nu} \dvg \leqslant \exp \Big( \frac n{2(n-1)!\vol(\mathbb S^n)} \int_{ \mathbb S^n} u\cdot \Po u\dvg + \frac n{\vol(\mathbb S^n)} \int_{ \mathbb S^n} u \dvg \Big);
\end{split}
\end{equation}
see \cite[Eq. (4.1'')]{ChangYang1992}. 
Similar to Trudinger's inequality \eqref{TrudingerInequalityWithHnorm}, we obtain the following Beckner inequality in terms of norm
\begin{equation}\label{BecknerInequalityWithHnorm}
\frac 1{\vol(\mathbb S^n)} \int_{ \mathbb S^n}  e^{n u} \dvg \leqslant \exp \Big(\frac { n! }{2 } \Big) 
\exp \Big( \frac{n}{2(n-1)!\vol(\mathbb S^n)} \|u\|_{H^{n/2}(\mathbb S^n)}^2\Big).
\end{equation}
By writing $\alpha u =n (\alpha u/n)$, we obtain the following Beckner-type inequality
\[
\begin{split}
\frac 1{\vol(\mathbb S^n)} \int_{ \mathbb S^n} e^{\alpha u} \dvg \leqslant \exp \Big( \frac {\alpha^2}{2 n!\vol(\mathbb S^n)} \int_{ \mathbb S^n} u\cdot \Po u\dvg + \frac \alpha{\vol(\mathbb S^n)} \int_{ \mathbb S^n} u \dvg \Big),
\end{split}
\]
for any $\alpha>0$, which takes a similar form as that of \eqref{TrudingerInequality}. However, we do not use the Beckner-type inequality in this paper.


\section{Boundedness of \texorpdfstring{$u(t)$}{Lg} in \texorpdfstring{$H^{n/2}(M)$}{Lg} for $0 \leqslant t < T$}
\label{sec-BoundInH^(n/2)}
 
In this section, we aim to show that $u(t)$ is bounded in $H^{n/2}(M,g_0)$ for all $t$ in the maximal interval of existence. We will split our argument into four cases depending on the sign of $\int_M Q_0 \dvg$. As we shall see later, such a $H^{n/2}$-bound for $u(t)$ is uniform in the non-supercritical case while the bound depends on the maximal time of existence in the supercritical case; see Proposition \ref{Hn/2bd-supercritical}.

\subsection{The negative case}
\label{subsec-BoundInH^(n/2)TheCaseIntegralQ<0}

To achieve our goal, we first need an analogue of \cite[Lemma 4.1]{bfr} whose proof is provided in Appendix \ref{apd-nonconcentration} for completeness.

\begin{lemma}\label{nonconcentration}
 Let $K$ be a measurable subset of $M$ with $\vol (K)>0$. Then there exist two constants $\alpha>1$ depending on $M$ and $g_0$ and $C_K>1$ depending on $M$, $g_0$, and $\vol (K)$ such that
\[
\int_Me^{nu}\dvg \leqslant C_K \exp\Big( \alpha\| u_0\| ^2_{H^{n/2}(M)} \Big) \max\Big\{\Big(\int_Ke^{nu}\dvg \Big)^\alpha,1\Big\}.
\]
\end{lemma}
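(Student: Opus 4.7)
The plan is to reduce the global integral $\int_M e^{nu}\,\dvg$ to the local one $\int_K e^{nu}\,\dvg$ by combining the Trudinger-type estimate \eqref{trudingerinequality} with Jensen's inequality on $K$, the key point being that the mean of $u$ on $M$ is nearly the mean of $u$ on $K$ up to an error controlled by the $H^{n/2}$ seminorm.

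First I would apply \eqref{trudingerinequality} with exponent $n$, which gives
\[
\int_M e^{nu}\,\dvg \leqslant \mathscr{C}_A\, e^{n\overline u}\exp\!\Big(\tfrac{n}{2(n-1)!\omega_n}\int_M u\cdot\Po u\,\dvg\Big),
\]
so everything reduces to bounding $e^{n\overline u}$ by $\int_K e^{nu}\,\dvg$ up to an acceptable exponential factor.

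Second, introduce the local average $\overline u_K := \vol(K)^{-1}\int_K u\,\dvg$ and decompose $\overline u = \overline u_K + (\overline u - \overline u_K)$. Jensen's inequality applied on $K$ yields
\[
e^{n\overline u_K} \leqslant \vol(K)^{-1}\int_K e^{nu}\,\dvg,
\]
while the identity $\overline u - \overline u_K = -\vol(K)^{-1}\int_K (u-\overline u)\,\dvg$, Cauchy--Schwarz on $K$, and the Poincaré-type inequality \eqref{eqPositivityOfP_0} give
\[
n\,|\overline u - \overline u_K| \leqslant \frac{n}{\sqrt{\vol(K)\,\lambda_1(\Po)}}\bigg(\int_M u\cdot\Po u\,\dvg\bigg)^{\!1/2}.
\]

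Third, I would plug the two bounds back into the Trudinger estimate and use Young's inequality $ab\leqslant \tfrac12(a^2+b^2)$ to absorb the square-root term by $\tfrac12\int_M u\cdot\Po u\,\dvg$ plus an additive constant depending only on $\vol(K)$ and $\lambda_1(\Po)$. Replacing $\int_M u\cdot\Po u\,\dvg$ by $\|u\|^2_{H^{n/2}(M,g_0)}$ via the equivalence of norms \eqref{eqBoundForIntegralOfuP_ou}--\eqref{eqNormH^n/2}, this produces an inequality of the shape
\[
\int_M e^{nu}\,\dvg \;\leqslant\; C_K\, e^{\alpha\|u\|^2_{H^{n/2}(M,g_0)}} \int_K e^{nu}\,\dvg.
\]
To arrive at the stated form, I would distinguish the two regimes: when $\int_K e^{nu}\,\dvg \geqslant 1$ the single power is dominated by the $\alpha$-th power since $\alpha>1$, and when $\int_K e^{nu}\,\dvg < 1$ the factor in front is crudely bounded by $1$, so that in either case the right-hand side is controlled by $\max\{(\int_K e^{nu}\,\dvg)^\alpha, 1\}$.

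The main bookkeeping difficulty will be Step~3: the application of Young's inequality must be arranged so that all $\vol(K)$-dependent constants land in \emph{additive} error terms (which then get absorbed into $C_K$), without polluting the coefficient in front of $\int_M u\cdot\Po u\,\dvg$ in the exponential---otherwise the claimed dependence of $\alpha$ only on $(M,g_0)$ would fail. A minor additional point is that the statement involves $\|u_0\|_{H^{n/2}}$ while the natural argument produces $\|u\|_{H^{n/2}}$; as written, the lemma is a general functional inequality and the subscript should just be read as the argument.
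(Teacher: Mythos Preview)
Your argument has a genuine gap, and it is precisely the point you dismiss as ``minor'' at the end. The appearance of $\|u_0\|_{H^{n/2}}$ rather than $\|u(t)\|_{H^{n/2}}$ in the exponential is \emph{not} a typo: the lemma is stated for the flow solution $u=u(t)$, and its sole purpose (see Lemma~\ref{upperboundofvolume} immediately afterward) is to bound $\int_M e^{nu(t)}\,\dvg$ uniformly in $t$ \emph{before} any $H^{n/2}$-bound on $u(t)$ is available. With $\|u(t)\|_{H^{n/2}}$ in the exponent the statement is circular and the continuity argument in Lemma~\ref{upperboundofvolume} collapses.

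What your proof is missing is the energy-decay input, Lemma~\ref{energydecay}, which gives
\[
\tfrac{n}{2}\int_M u\cdot\Po u\,\dvg + n\int_M Q_0 u\,\dvg \leqslant \Escr[u_0],
\]
and is the only mechanism by which the initial data can enter. The paper's proof (Appendix~\ref{apd-nonconcentration}) uses this in two places. In Step~2 it rewrites the bound as $\tfrac12\int_M u\cdot\Po u\,\dvg \leqslant \tfrac{2}{n}\Escr[u_0] - 2\big(\int_M Q_0\big)\overline u + C$, so that after Trudinger the exponent becomes \emph{linear} in $\overline u$ with a coefficient depending only on $(M,g_0)$. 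Step~1 then bounds $\int_M u\,\dvg$ (hence $\overline u$) by $|\Escr[u_0]|$ plus a multiple of $\max\{\int_K u\,\dvg,0\}$, via a case split on the sign of $\int_K u\,\dvg$ combined with Poincar\'e; Jensen on $K$ finally converts $\int_K u\,\dvg$ into $\log\int_K e^{nu}\,\dvg$, producing the power $\alpha$. Your Trudinger-plus-Jensen skeleton resembles the paper's Step~2, but without the energy monotonicity you cannot trade $\int_M u\cdot\Po u\,\dvg$ for quantities depending only on $u_0$; note in particular that when $\int_M Q_0\,\dvg<0$ the energy bound couples $\int_M u\cdot\Po u$ to $\overline u$ with the wrong sign, so one genuinely needs the two-step structure rather than a direct substitution.
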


With help of this lemma, we are able to obtain a uniform bound for $\vol(M,g(t))$ along the flow.

\begin{lemma}\label{lem-upperboundofvolume}
 Suppose the flow \eqref{eqFlow} with initial data $u_0$ is defined on the maximal interval of existence $[0,T)$ for some $T>0$. Also, assume that there exists a constant $C_0$ depending on $f^-$ and $(M,g_0)$, and a constant $\tau$ depending only on $(M,g_0)$ such that 
\[
\sup_Mf^+\leqslant C_0 \exp \big( -\tau \|u_0\|_{H^{n/2}(M)}^2 \big).
\] 
Then, there exists a uniform constant $\gamma>0$ such that
\[
\int_Me^{nu (t)}\dvg \leqslant \gamma
\]
 for all $t\in[0,T)$.
\end{lemma}

\begin{proof}
Define
\[
K=\Big\{x\in M : f(x)\leqslant \frac12\inf_Mf\Big\}.
\]
Notice that $K\neq\emptyset$ due to the fact that $\inf_Mf<0$. Hence, by the continuity of $f$, we conclude $\vol(K)>0$. Since $u_0\in Y$ and $f = f^+ - f^-$, we have
\begin{align*}
 -\int_MQ_0\dvg & =-\int_Mfe^{nu_0}\dvg 
\leqslant (\sup_M f^-)\int_Me^{nu_0}\dvg ,
\end{align*}
which implies that
\[
\frac 1 {-\sup_Mf^-} \int_MQ_0\dvg \leqslant \int_Me^{nu_0}\dvg .
\]
From \eqref{TrudingerInequalityWithHnorm} we have
\begin{equation}\label{initialvolume}
\begin{split}
 \int_Me^{nu_0}\dvg
 \leqslant &C \exp \big( C \|u_0 \|^2_{H^{n/2}(M)} \big),
\end{split}
 \end{equation}
 where $C> 1$ is a constant depending on $M$ and $g_0$. (Here, we can freely assume that $C>1$.) Therefore, we get
 \begin{equation}\label{hn2normofu0}
 \frac 1{-\sup_Mf^-}\int_MQ_0\dvg \leqslant C \exp \big( C\| u_0\| ^2_{H^{n/2}(M)} \big).
 \end{equation}
To conclude the proof, we shall show that the inequality \eqref{initialvolume} is stable along the flow in the sense that $\int_M e^{n u} \dvg$ is bounded from above by some multiple of the right hand side of \eqref{initialvolume}. 
With the set $K$ chosen as above, let $C_K$ and $\alpha$ be the constants appearing in Proposition \ref{nonconcentration}. Set
\[
\gamma=2C_K(8C )^\alpha \exp \big( (C +1)\alpha\| u_0\| ^2_{H^{n/2}(M)} \big).
\]
As we shall see later, this constant is exactly the upper bound for $\int_M e^{n u} \dvg$ . Now we choose the two constants $\tau$ and $C_0$ in the lemma in such a way that
\[
C_0 \exp \big( -\tau\| u_0\| ^2_{H^{n/2}(M)} \big)= 2 C \gamma^{-1} \exp \big(C \| u_0\|_{H^{n/2}(M)}^2 \big)\sup_Mf^-,
\]
which can be computed precisely to get
\[
C_0=8^{-\alpha}C_K^{-1}C^{1-\alpha}\sup_Mf^- .
\]
and
\[
\tau= \alpha(C+1)-C.
\]
Our choice of $C_0$ and $\tau$ is to ensure that
\begin{equation}\label{bdforf^+overf^-}
\gamma \frac{\sup_M f^+}{ \sup_M f^- } \leqslant 2 C \exp (C \| u_0\|_{H^{n/2}(M)}^2).
\end{equation}
Now, in the rest of the proof, we claim that
 \begin{equation}\label{upperboundofvolume1}
 \int_Me^{nu(t)}\dvg \leqslant \gamma,
 \end{equation}
 for all $t\in[0,T)$. To see this, we let
\[
I=\Big\{t\in[0,T): \int_Me^{nu(s)}\dvg \leqslant \gamma ~~\mbox{ for all }~~s\in[0,t]\Big\}.
\]
From \eqref{initialvolume}, we have that $0\in I$. Set $\beta=\sup I$ and suppose that $\beta<T$. Then by continuity, we have
 \begin{equation}
 \label{volumeattimebeta}
 \int_Me^{nu(\beta)}\dvg =\gamma.
 \end{equation}
 Notice that $u(t)\in Y$ for all $t\in[0,T)$. Hence, in particular, $u(\beta)\in Y$. In the following, we split our argument into two cases.

\noindent{\bf Case 1}. Suppose that
\[
\int_Mf^+e^{nu(\beta)}\dvg \leqslant \frac12\int_Mf^-e^{nu(\beta)}\dvg.
\]
Then the fact that $u(\beta)\in Y$ gives
 \begin{align*}
 -\int_MQ_0\dvg 
 & =\int_Mf^-e^{nu(\beta)}\dvg -\int_Mf^+e^{nu(\beta)}\dvg \geqslant \frac12\int_Mf^-e^{nu(\beta)}\dvg .
 \end{align*}
 This together with the fact that $2 f^-\geqslant \sup_Mf^-$ for all $x\in K$ yields
\begin{equation}
 \label{volumeattimebeta1}
 \int_Ke^{nu(\beta)}\dvg \leqslant \frac{4}{-\sup_Mf^-}\int_MQ_0\dvg .
\end{equation}
Combining \eqref{hn2normofu0} and \eqref{volumeattimebeta1} gives
\[
\int_Ke^{nu(\beta)}\dvg \leqslant 8C \exp \big( C\| u_0\| ^2_{H^{n/2}(M)} \big).
\]
From Lemma \ref{nonconcentration}, it follows that
\begin{align*}
 \int_Me^{nu(\beta)}\dvg \leqslant &C_K \exp \big( \alpha\| u_0\| ^2_{H^{n/2}(M)} \big) \max\Big\{\Big(\int_Ke^{nu(\beta)}\dvg \Big)^\alpha,1\Big\}\\
\leqslant &C_K \exp \big( \alpha\| u_0\| ^2_{H^{n/2}(M)} \big) \max \Big\{ (8C)^\alpha \exp \big( \alpha C\| u_0\| ^2_{H^{n/2}(M)} \big) ,1 \Big\}\\
\leqslant &C_K(8C)^\alpha \exp \big( (C+1)\alpha\| u_0\| ^2_{H^{n/2}(M)} \big) \\
= &\gamma/2,
\end{align*}
which contradicts \eqref{volumeattimebeta}.

\noindent{\bf Case 2}: Suppose that
\[
\int_Mf^+e^{nu(\beta)}\dvg >\frac12\int_Mf^-e^{nu(\beta)}\dvg.
\]
This situation corresponds to the case when $f^+$ plays some role and therefore we need certain smallness of $\sup_M f$. Again by using the fact that $2f^-(x)\geqslant- \inf_Mf$ for all $x\in K$ and \eqref{volumeattimebeta}, we can estimate
\begin{align*}
 \int_Ke^{nu(\beta)}\dvg \leqslant &\frac{2}{\sup_Mf^-}\int_Mf^-e^{nu(\beta)}\dvg \\
 \leqslant &\frac{4}{\sup_Mf^-}\int_Mf^+e^{nu(\beta)}\dvg \leqslant \frac{4\gamma\sup_Mf^+}{\sup_Mf^-}.
\end{align*}
By \eqref{bdforf^+overf^-}, we then have
\[
\int_Me^{nu(\beta)}\dvg \leqslant 8C \exp \big( C\| u_0\| ^2_{H^{n/2}(M)} \big).
\]
Thus, by arguing as in Case 1, we obtain a contradiction once more. Hence, the claim \eqref{upperboundofvolume1} holds and the lemma is proved.
\end{proof} 

\begin{lemma}
Suppose that the flow \eqref{eqFlow} is defined on $[0,T)$. Then, there exists a uniform constant $C>0$ such that
\[
\| u(t)\| _{H^{n/2}(M)}\leqslant C
\]
for all $t\in[0,T)$.
\end{lemma}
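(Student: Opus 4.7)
The argument parallels those in Subsections~\ref{subsec-BoundInH^(n/2)TheCaseIntegralQ=0} and~\ref{subsec-BoundInH^(n/2)TheCaseIntegralQ>0}. The goal is a uniform two-sided bound on $\overline u$ together with a uniform upper bound on $A:=\int_Mu\cdot\Po u\,\dvg$; via the Poincar\'e-type inequality \eqref{eqPositivityOfP_0} this is enough to control $\|u\|_{H^{n/2}(M,g_0)}$ through \eqref{eqNormH^n/2}. The new ingredients compared to the previous subsections are Lemma~\ref{upperboundofvolume}, which gives the uniform upper bound $\int_Me^{nu}\,\dvg\leqslant\gamma$ on $[0,T)$, and the sign condition $\int_MQ_0\,\dvg<0$, which will make the mean-value contribution to the energy cooperate with a one-sided Jensen estimate.

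The first step is to bound $\overline u$ from above: by Jensen's inequality applied to Lemma~\ref{upperboundofvolume}, $\overline u\leqslant n^{-1}\log(\gamma/\vol(M))$. The second step bounds $A$ from above. Splitting $n\int_MQ_0u\,\dvg=n\int_MQ_0(u-\overline u)\,\dvg+n\overline u\int_MQ_0\,\dvg$ and estimating the oscillation term via Young's inequality and \eqref{eqPositivityOfP_0} exactly as in \eqref{upperboundofintegralofQ_0(u-baru)1}, then inserting into Lemma~\ref{energydecay}, yields
\[
\frac{n}{4}A+n\overline u\int_MQ_0\,\dvg\leqslant\Escr[u_0]+\frac{n}{\lambda_1(\Po)}\int_MQ_0^2\,\dvg.
\]
Since $\int_MQ_0\,\dvg<0$, the upper bound on $\overline u$ from the first step turns this into a uniform upper bound on $A$.

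For the lower bound on $\overline u$, the constraint $u(t)\in Y$ reads $\int_Mfe^{nu}\,\dvg=\int_MQ_0\,\dvg<0$. Splitting $f=f^+-f^-$ and using $f^-\leqslant-\inf_Mf$, one extracts the uniform positive lower bound
\[
\int_Me^{nu}\,\dvg\geqslant\frac{-\int_MQ_0\,\dvg}{-\inf_Mf}>0.
\]
The Trudinger-type inequality \eqref{trudingerinequality} with $\alpha=n$, rewritten as
\[
\int_Me^{nu}\,\dvg\leqslant\Cscr_A\,e^{n\overline u}\exp\Big(\tfrac{n/2}{(n-1)!\omega_n}\,A\Big),
\]
then combines with the upper bound on $A$ from the previous step to yield a uniform lower bound on $\overline u$. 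Two-sided control of $\overline u$ together with the upper bound on $A$ and \eqref{eqPositivityOfP_0} controls $\int_Mu^2\,\dvg$, completing the estimate in $H^{n/2}(M,g_0)$.

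The main obstacle has already been overcome in Lemma~\ref{upperboundofvolume}, whose proof required a delicate interplay between the assumptions $\sup_Mf\leqslant C_0$ and $\inf_Mf<0$ to prevent the volume from blowing up along the flow; after that, the negativity of $\int_MQ_0\,\dvg$ turns the usually bad mean-value term $n\overline u\int_MQ_0\,\dvg$ in the energy into a quantity that can be absorbed by a simple Jensen estimate, and the rest of the argument is routine energy-plus-Trudinger manipulation analogous to the earlier subsections.
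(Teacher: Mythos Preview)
Your proof is correct and follows essentially the same route as the paper: Jensen plus Lemma~\ref{upperboundofvolume} for the upper bound on $\overline u$, then the energy inequality combined with \eqref{upperboundofintegralofQ_0(u-baru)1} to bound $A=\int_M u\cdot\Po u\,\dvg$, and finally a lower bound on $\overline u$ to close via \eqref{eqPositivityOfP_0}.

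The only noteworthy difference is in how the lower bound on $\overline u$ is obtained. You extract a uniform positive lower bound on the volume from the constraint $u(t)\in Y$ and then feed it through the Trudinger inequality \eqref{trudingerinequality} together with the bound on $A$. The paper instead reads the lower bound directly off the inequality $\tfrac{n}{4}A+n\overline u\int_MQ_0\,\dvg\leqslant \Escr[u_0]+\tfrac{n}{\lambda_1(\Po)}\int_MQ_0^2\,\dvg$: dropping the nonnegative term $\tfrac{n}{4}A$ and dividing by the negative number $n\int_MQ_0\,\dvg$ immediately gives $\overline u\geqslant\big(\Escr[u_0]+\tfrac{n}{\lambda_1(\Po)}\int_MQ_0^2\,\dvg\big)/\big(n\int_MQ_0\,\dvg\big)$. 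The paper's argument is a bit more direct and avoids invoking Trudinger a second time, but your route is equally valid and mirrors the strategy used in the null case.
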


\begin{proof}
From Lemma \ref{lem-upperboundofvolume} and Jensen's inequality, it follows that
 \begin{equation}
 \label{upperboundofbaru1}
 {\overline u}\leqslant \frac1n\log \Big( \frac{1}{\vol(M)} \int_Me^{nu}\dvg \Big) \leqslant \frac1n\log\frac{\gamma}{\vol(M)}.
 \end{equation}
 By Lemma \ref{energydecay}, we get
 \begin{equation}
 \label{energyupperbound}
\frac n2 \int_Mu\cdot \Po u\dvg +n\int_MQ_0(u-\overline{u})\dvg +n{\overline u}\int_MQ_0\dvg \leqslant \Escr [u_0].
 \end{equation}
Substituting \eqref{upperboundofintegralofQ_0(u-baru)1} into \eqref{energyupperbound} yields
\begin{equation}
 \label{upperboundofintegralofuP_0u1}
\frac n4 \int_Mu\cdot \Po u\dvg+n{\overline u}\int_MQ_0\dvg \leqslant \Escr [u_0]+\frac{n}{\lambda_1}\int_MQ_0^2\dvg .
 \end{equation}
 By \eqref{upperboundofbaru1} and \eqref{upperboundofintegralofuP_0u1}, we have
 \begin{equation}
 \label{upperboundofintegralofup0u1}
 \int_Mu\cdot \Po u\dvg \leqslant \underbrace {-\frac 4 n\log\frac{\gamma}{\vol(M)}\int_MQ_0\dvg +\frac{4\Escr [u_0]}{n}+\frac{4}{\lambda_1}\int_MQ_0^2\dvg }_{=:C_1}.
 \end{equation}
By the Poincar\'e-type inequality \eqref{eqPositivityOfP_0}, we have
\[
\int_M(u-{\overline u})^2\dvg \leqslant \frac{1}{\lambda_1}\int_Mu\cdot \Po u\dvg \leqslant \frac{C_1}{\lambda_1}.
\]
To finish the proof of the lemma, it remains to get a lower bound for ${\overline u}$. Indeed, from the positivity of $\Po $, \eqref{upperboundofintegralofuP_0u1}, and $\int_MQ_0\dvg <0$, we know that
\[
{\overline u}\geqslant\frac{(n/\lambda_1) \int_MQ_0^2\dvg +\Escr [u_0]}{n\int_MQ_0\dvg }.
\]
We thus conclude that there exists a uniform constant $C_2>0$ such that
 \begin{equation}
 \label{upperboundofl2normofu1}
 \int_Mu^2\dvg \leqslant C_2 .
 \end{equation}
Combining \eqref{upperboundofintegralofup0u1} and \eqref{upperboundofl2normofu1} yields the assertion.
\end{proof}

\subsection{The null case}
\label{subsec-BoundInH^(n/2)TheCaseIntegralQ=0}

In this case, because $u_0\in Y$ and $\partial_t \int_Mfd\mu_{g(t)}=0$, we conclude that $\int_Mfd\mu_{g(t)}=0 $ for all $t\geqslant 0$. Furthermore, in the null case, we always have $\int_M Q_{g(t)} d\mu_{g(t)} = 0$ and $\int_M Q_0 \dvg = 0$. Using these properties, we obtain the following result.

\begin{lemma}\label{volumepreserve}
Along the flow \eqref{eqFlow}, the volume of $g(t)$ denoted by $\vol (M,g(t))$ is preserved. Hence, if we set $V^0=\int_Me^{nu_0}\dvg $, then $\int_Me^{nu(t)}\dvg =V^0$ for all $t$.
\end{lemma}

\begin{proof}
Clearly, we have
\begin{align*}
 \frac{d}{dt}\int_Me^{nu(t)}\dvg =&n\int_Mu_td\mu_{g(t)}=n\int_M \big( \lambda (t) f-Q_{g(t)} \big) d\mu_{g(t)} 
 =0.
\end{align*}
Hence $\int_Me^{nu(t)}\dvg$ is constant along the flow; this proves the assertion.
\end{proof}

Using the preceding lemma, we can control $u(t)$ in $H^{n/2}$-norm as shown below.

\begin{lemma}\label{lem-BoundInH^{n/2}-zero}
There exists a uniform constant $C>0$ such that
\[
\| u(t) \|_{H^{n/2}(M)}\leqslant C
\]
for all time $t$ in the maximal interval of existence.
\end{lemma}

\begin{proof}
From Lemma \ref{energydecay}, it follows that
\begin{equation}\label{nullenergyupperbound}
\frac n2\int_Mu\cdot \Po u\dvg+n\int_MQ_0(u-{\overline u})\dvg \leqslant \Escr [u_0].
\end{equation}
Using the Poincar\'e-type inequality \eqref{eqPositivityOfP_0}, we get
\[
\int_M(u-{\overline u})^2\dvg \leqslant \frac{1}{\lambda_1}\int_Mu\cdot \Po u\dvg,
\]
which together with Young's inequality implies that
\begin{equation}\label{upperboundofintegralofQ_0(u-baru)1}
\Big|n\int_MQ_0(u-{\overline u})\dvg \Big|\leqslant\frac n4\int_Mu\cdot\Po u\dvg +\frac{n}{\lambda_1}\int_MQ_0^2\dvg .
\end{equation}
By substituting \eqref{upperboundofintegralofQ_0(u-baru)1} into \eqref{nullenergyupperbound}, we have
\begin{equation}\label{nullupperboundofupu}
\int_Mu\cdot\Po u\dvg \leqslant\frac4n\Escr[u_0]+\frac{4}{\lambda_1}\int_MQ_0^2\dvg ,
\end{equation}
Hence, in view of \eqref{eqNormH^n/2}, to bound $\| u(t) \|_{H^{n/2}(M)}$, it suffices to bound $\int_M u(t) \dvg$. By Jensen's inequality and Lemma \ref{volumepreserve}, we have that
 $$e^{n{\overline u}}\leqslant \frac 1{\vol(M)} \int_Me^{nu}\dvg =\frac {V^0}{\vol(M)}.$$
Therefore, we can bound $\overline u$ from above as follows
\[
\overline u \leqslant \frac1n\log \Big(\frac{V^0}{\vol(M)}\Big).
\]
To bound $\overline u$ from below, we apply the Trudinger-type inequality \eqref{TrudingerInequality} and \eqref{nullupperboundofupu} to get
\[
\int_M \exp\big(n(u-{\overline u})\big) \dvg \leqslant \Cscr_A \exp\Big( \frac{2\Escr [u_0]}{(n-1)!\vol(\mathbb S^n)}+\frac{2n}{\lambda_1(n-1)!\vol(\mathbb S^n)}\int_MQ_0^2\dvg \Big).
\]
Since $\int_Me^{nu}\dvg =V^0$, we conclude that
\[
\exp \big(-n \overline u \big) \leqslant \frac{\Cscr_A}{V^0} \exp \Big( \frac{2\Escr [u_0]}{(n-1)!\vol(\mathbb S^n)}+\frac{2n}{\lambda_1(n-1)!\vol(\mathbb S^n)}\int_MQ_0^2\dvg \Big).
\]
Hence, we get a lower bound for $\overline u$ as follows
\[
\overline u \geqslant-\Big(\frac1n\log\Big(\frac{\Cscr_A}{V^0}\Big)+\frac{2}{\lambda_1(n-1)!\vol(\mathbb S^n)}\int_MQ_0^2\dvg +\frac{2\Escr [u_0]}{n!\vol(\mathbb S^n)}\Big).
\]
Our proof is thus complete by combining all estimates above.
\end{proof}

\subsection{The case \texorpdfstring{$0<\int_MQ_0\dvg <(n-1)!\vol(\mathbb S^n)$}{Lg}}
\label{subsec-BoundInH^(n/2)TheCaseIntegral0<Q<S^n}

Notice that $\int_Mfe^{nu}\dvg =\int_MQ_0\dvg $ for all $t\geqslant 0$. Therefore,
$$\int_MQ_0\dvg =\int_Mfe^{nu}\dvg \leqslant (\sup_Mf) \int_Me^{nu}\dvg .$$
Hence,
\begin{equation}\label{volumelowerbound}
 \int_Me^{nu}\dvg \geqslant \frac 1{\sup_Mf} \int_MQ_0\dvg .
\end{equation}
Combining \eqref{TrudingerInequality} and \eqref{volumelowerbound} gives
\begin{align*}
 \frac{n/2}{(n-1)!\vol(\mathbb S^n)} \int_M u\cdot \Po u\dvg \geqslant&\log\int_M \exp \big( n(u-{\overline u}) \big) \dvg -\log \Cscr_A\\
\geqslant&\log\Big(\frac 1{\Cscr_A\sup_M f } \int_MQ_0\dvg \Big)-n{\overline u}.
\end{align*}
Thus, we have
\begin{equation}\label{barulowerbound}
 {\overline u}\geqslant-\frac{n/2}{n!\vol(\mathbb S^n)}\int_Mu\cdot \Po u\dvg +\frac1n\log\Big(\frac 1{\Cscr_A\sup_M f } \int_MQ_0\dvg \Big).
\end{equation}
Using \eqref{barulowerbound} and Lemma \ref{energydecay}, we get
\begin{equation}\label{estimateofintegralofuP_0u}
\begin{split}
 \Escr [u_0] \geqslant& \frac n2\int_Mu\cdot \Po u\dvg +n\int_MQ_0(u-\overline{u})\dvg +n\overline{u}\int_MQ_0\dvg \\
\geqslant&\frac n2\Big(1-\frac{\int_MQ_0\dvg }{(n-1)!\vol(\mathbb S^n)}\Big)\int_Mu\cdot \Po u\dvg+n\int_MQ_0(u-\overline{u})\dvg \\
& +\int_MQ_0\dvg \log\Big(\frac 1{\Cscr_A\sup_M f } \int_MQ_0\dvg \Big).
\end{split}
\end{equation}
Since $\int_MQ_0\dvg <(n-1)!\vol(\mathbb S^n)$, by Young's inequality and \eqref{eqPositivityOfP_0} we have the following
\begin{equation}\label{upperboundofintegralofQ_0(u-baru)}
\begin{split}
\Big|n\int_MQ_0(u-\overline{u})\dvg \Big| \leqslant & \frac n4\Big(1-\frac{\int_MQ_0\dvg }{(n-1)!\vol(\mathbb S^n)}\Big)\int_Mu\cdot \Po u\dvg\\
& +\frac {n}{\lambda_1} \frac{ \int_MQ_0^2\dvg }{ 1- (\int_MQ_0\dvg \big)/\big( (n-1)!\vol(\mathbb S^n)\big)} .
\end{split}
\end{equation}
 By substituting \eqref{upperboundofintegralofQ_0(u-baru)} into \eqref{estimateofintegralofuP_0u}, we conclude that there exists a uniform constant $C_1>0$ such that
\begin{equation}\label{upperboundofintegralofup0u}
\int_Mu\cdot \Po u\dvg \leqslant C_1.
\end{equation}
Now substituting \eqref{upperboundofintegralofup0u} into \eqref{barulowerbound} yields that
\begin{equation}\label{lowerbounderofbaru}
 {\overline u}\geqslant C_2,
\end{equation}
for some uniform constant $C_2$. To obtain an upper bound for $\overline{u}$, we notice that \eqref{upperboundofintegralofQ_0(u-baru)1} still holds in this case. Hence by substituting \eqref{upperboundofintegralofQ_0(u-baru)1} into the first inequality in \eqref{estimateofintegralofuP_0u}, by the positivity of $\Po $, and by the fact that $\int_MQ_0\dvg >0$, we get
\begin{equation}\label{upperboundofbaru}
{\overline u}\leqslant \frac{(n/\lambda_1) \int_MQ_0^2\dvg +\Escr [u_0]}{n\int_MQ_0\dvg }.
\end{equation}
By plugging \eqref{upperboundofintegralofup0u}, \eqref{lowerbounderofbaru}, and \eqref{upperboundofbaru} into \eqref{eqPositivityOfP_0}, we conclude that there exists a uniform constant $C_3>0$ such that
\begin{equation}\label{upperboundofl2normofu}
 \int_Mu^2\dvg \leqslant C_3.
\end{equation}
The assertion follows by combining \eqref{upperboundofintegralofup0u} and \eqref{upperboundofl2normofu}.

\subsection{The case \texorpdfstring{$M=\mathbb S^n$}{Lg}}

Now we consider the case where $(M,g_0)$ is the standard sphere $\mathbb S^n$ equipped with the standard metric $g_{\mathbb S^n}$. In particular, one has that $Q_0 \equiv (n-1)!$. As we have already seen before, the strict inequality $\int_M Q_0 \dvg < (n-1)! \vol(\mathbb S^n)$ is crucial in Subsections \ref{subsec-BoundInH^(n/2)TheCaseIntegralQ<0}--\ref{subsec-BoundInH^(n/2)TheCaseIntegral0<Q<S^n}. However, this is no longer the case in this new setting. Therefore, to obtain the uniform boundedness of $u(t)$ in $H^{n/2}(\mathbb S^n)$, we need the following concentration-compactness lemma whose proof is provided in Appendix \ref{apd-nonconcentration}.

\begin{lemma}\label{concentrationcompactness}
Suppose that $[0,T)$ is the maximal interval of existence of the flow \eqref{eqFlow}. Then we have the following alternatives:
\begin{enumerate}[label=\rm (\roman*)]
 \item either there exists a uniform constant $C>0$ independent of $T$ such that
\[
\|u(t)\|_{H^{n/2}(\mathbb S^n)} \leqslant C
\] 
for all $t\in[0,T)$
 \item or there exists a sequence $t_k \to T$ as $k \to +\infty$ and a point $x_\infty\in \mathbb S^n$ such that 
\begin{equation}\label{concentration}
\lim_{k \to +\infty}\int_{B_r(x_\infty)}fe^{nu(t_k)}\dvSn =(n-1)!\vol(\mathbb S^n),
\end{equation}
for all $r>0$. Here $B_r(x_\infty)$ is the geodesic ball in $\mathbb S^n$ with radius $r$ and centered at $x_\infty$. In addition, there holds
\begin{equation}\label{concentration1}
\lim_{k \to +\infty}\int_{B_r(y)}fe^{nu(t_k)}\dvSn =0,
\end{equation}
for all $y\in \mathbb S^n\backslash\{x_\infty\}$ and all $0\leqslant r< \dist (y,x_\infty)$.
\end{enumerate}
\end{lemma}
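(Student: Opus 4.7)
My approach is a concentration--compactness argument in the spirit of P.-L. Lions, Struwe, and Brendle. Suppose alternative (i) fails, so there is a sequence $t_k \to T$ with $\|u(t_k)\|_{H^{n/2}(\mathbb S^n, g_{\mathbb S^n})} \to \infty$. Write $u(t_k) = \bar u_k + v_k$ with $\int v_k \dvSn = 0$. Since $Q_0 \equiv (n-1)!$ on $\mathbb S^n$ and $\Po$ kills constants, the energy decay of Lemma \ref{energydecay} gives
\[
\frac{n}{2} \int_{\mathbb S^n} v_k \Po v_k \dvSn + n! \omega_n \, \bar u_k \leqslant \Escr[u_0].
\]
On the other hand, the preserved constraint $\int_{\mathbb S^n} f e^{nu(t_k)} \dvSn = (n-1)!\omega_n$ together with $\sup_{\mathbb S^n} f > 0$ yields the volume lower bound $\int_{\mathbb S^n} e^{nu(t_k)} \dvSn \geqslant (n-1)!\omega_n/\sup_{\mathbb S^n} f$. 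Combining this with the Trudinger-type inequality \eqref{trudingerinequality} at $\alpha=n$ produces
\[
\bar u_k \geqslant -C - \frac{1}{2(n-1)!\omega_n}\int_{\mathbb S^n} v_k \Po v_k \dvSn.
\]
Setting $E_k := \int_{\mathbb S^n} v_k \Po v_k \dvSn$, if $E_k$ were uniformly bounded the two displays would bound $\bar u_k$ on both sides, and the Poincar\'e-type inequality \eqref{eqPositivityOfP_0} would give a uniform $H^{n/2}$-bound on $u(t_k)$. Hence we may assume $E_k \to \infty$ along the subsequence.

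\medskip

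The heart of the proof is a dichotomy for the normalized sequence $w_k := v_k/\sqrt{E_k}$, which satisfies $\int_{\mathbb S^n} w_k \Po w_k \dvSn = 1$ and $\int w_k \dvSn = 0$. By weak compactness, up to subsequence $w_k \rightharpoonup w_\infty$ in $H^{n/2}(\mathbb S^n)$, and the non-negative localized energy densities $d\pi_k$ built from the quadratic form of $\Po$ applied to $w_k$ (extracted by a partition of unity and a commutator argument for the pseudodifferential operator $\Po$) converge weakly-$\ast$ to a probability measure $\pi$ on $\mathbb S^n$. An improved Adams inequality, refining the sharp bound \eqref{eqAdamsTypeInequality}, asserts that if $\pi$ is \emph{not} a single Dirac mass, then for each $x \in \mathbb S^n$ there exist $r_x>0$ and $\eta_x>0$ with $\int_{B_{r_x}(x)} e^{n(1+\eta_x) v_k} \dvSn \leqslant C$ uniformly in $k$. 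Extracting a finite subcover, $e^{nv_k}$ is bounded in $L^{1+\eta}(\mathbb S^n)$ for some $\eta>0$, which combined with the lower bound on $\bar u_k$ from the first paragraph again produces a uniform $H^{n/2}$-bound on $u(t_k)$, a contradiction. Therefore $\pi = \delta_{x_\infty}$ for some $x_\infty \in \mathbb S^n$.

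\medskip

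It remains to deduce \eqref{concentration} and \eqref{concentration1}. Fix $y \neq x_\infty$ and $0 \leqslant r < \mathrm{dist}(y,x_\infty)$, and pick a cut-off $\chi \in C^\infty(\mathbb S^n)$ supported in a slightly larger ball $B_{r'}(y)$ still disjoint from a neighborhood of $x_\infty$. Since $\pi$ assigns no mass to $\mathrm{supp}\,\chi$, one has $\int_{\mathbb S^n} (\chi v_k)\Po(\chi v_k) \dvSn / E_k \to 0$, so the localized Adams inequality delivers some $\eta>0$ and a uniform bound on $\int_{B_{r'}(y)} e^{n(1+\eta) v_k} \dvSn$. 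Coupled with the upper bound $\bar u_k \leqslant \Escr[u_0]/(n!\omega_n)$ extracted from the energy inequality (using $\int v_k \Po v_k \geqslant 0$) and the boundedness of $f$, this gives
\[
\int_{B_r(y)} f e^{nu(t_k)} \dvSn \longrightarrow 0,
\]
which is \eqref{concentration1}; the preserved mass $\int_{\mathbb S^n} f e^{nu(t_k)} \dvSn = (n-1)!\omega_n$ then forces \eqref{concentration} by a standard covering argument. The principal technical obstacle is the improved Adams inequality for the non-local higher-order operator $\Po$ in the non-atomic case: it amounts to showing that the sharp constant $n!\omega_n/2$ in \eqref{eqAdamsTypeInequality} can be \emph{strictly} improved unless the energy measure of the normalized sequence concentrates at a single point, which is the analogue for $\Po$ of the classical Moser--Trudinger concentration phenomenon and precisely reflects the critical threshold $(n-1)!\omega_n$ appearing in Theorem \ref{main}.
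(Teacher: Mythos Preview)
Your approach is genuinely different from the paper's, and the overall strategy (Lions-type concentration--compactness for the normalized sequence $w_k=v_k/\sqrt{E_k}$) is reasonable, but the key ``improved Adams inequality'' you invoke is \emph{false as stated} in this regime. You claim that if the defect measure $\pi$ is not a single Dirac mass, then $\int_{B_{r_x}(x)}e^{n(1+\eta_x)v_k}\,\dvSn\leqslant C$ uniformly in $k$. This cannot hold: Lions' improvement applies to the quadratic exponential $e^{c_n w_k^2}=e^{c_n v_k^2/E_k}$, giving $\int e^{pc_n v_k^2/E_k}\leqslant C$ for some $p>1$; passing to the linear exponential via Young's inequality yields only $\int e^{n(1+\eta)v_k}\leqslant C\exp\bigl(\tfrac{n(1+\eta)^2}{2p(n-1)!\omega_n}E_k\bigr)$, which diverges since $E_k\to\infty$. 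What actually \emph{does} follow is a Trudinger-type bound with a strictly better coefficient, $\int e^{nv_k}\leqslant C\exp\bigl(\tfrac{n}{2p(n-1)!\omega_n}E_k\bigr)$; combining this with the volume lower bound and the energy inequality $\tfrac{n}{2}E_k+n!\omega_n\bar u_k\leqslant C$ forces $\tfrac{n}{2}(1-1/p)E_k\leqslant C$, contradicting $E_k\to\infty$. The same misstatement recurs in your derivation of \eqref{concentration1}: with $\pi=\delta_{x_\infty}$ the localized energy of $v_k$ away from $x_\infty$ is only $o(E_k)$, not bounded, so you do not get a uniform bound on $\int_{B_{r'}(y)}e^{n(1+\eta)v_k}$. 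To obtain $\int_{B_r(y)}fe^{nu(t_k)}\to 0$ you must instead exploit the \emph{sharp} upper bound $n\bar u_k\leqslant C-\tfrac{n}{2(n-1)!\omega_n}E_k$ from the energy (not merely $\bar u_k\leqslant C$), so that the decay of $e^{n\bar u_k}$ beats the localized Trudinger growth $e^{o(E_k)}$.

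The paper proceeds quite differently and avoids these difficulties entirely by exploiting the conformal group of $\mathbb S^n$. For each $t$ it chooses a M\"obius transformation $\phi_{y(t),r(t)}$ so that the pulled-back conformal factor $w(t)=u\circ\phi+\tfrac1n\log\det d\phi$ satisfies the balancing condition $\int_{\mathbb S^n}x_ie^{nw(t)}\,\dvSn=0$. Conformal invariance of $\Escr$ and of the constraint, together with the Wei--Xu improved Beckner inequality (valid under the balancing condition with a coefficient $a<1$ in front of $\int w\PSn w$), yields a \emph{uniform} $H^{n/2}$-bound on $w(t)$ for all $t$. The dichotomy is then simply whether the dilation parameter $r(t)$ stays bounded (giving (i)) or not (giving (ii), with $x_\infty=\lim y(t_k)$); in the latter case the concentration statements follow directly from the $H^{n/2}$-bound on $w$ and $\phi(t_k)\to x_\infty$ a.e. Thus the paper replaces your unproven concentration--compactness dichotomy by a concrete improved Onofri-type inequality on $\mathbb S^n$, at the cost of being specific to the sphere.
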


Note that Lemma \ref{concentrationcompactness} is a higher dimensional analogue of \cite[Lemma 4.2]{bfr}. With help of this lemma and under the hypotheses of Theorem \ref{Criticalcase}, we can conclude that $u(t)$ is uniformly bounded in $H^{n/2} (\mathbb S^n)$.

\begin{lemma}\label{Hn/2bd-critical}
Suppose that the flow \eqref{eqFlow} is defined on $[0,T)$. Then, under the hypotheses of Theorem \ref{Criticalcase}, there exists a uniform constant $C>0$ independent of $T$ such that
\begin{equation*}
 \|u(t)\|_{H^{n/2}(\mathbb S^n)}\leqslant C,
\end{equation*}
for all $t\in[0,T)$.
\end{lemma}

\begin{proof}
Since $u_0$ is $G$-invariant and the solution $u(t)$ is unique, it is not hard to see that $u(t)$ is also $G$-invariant. Moreover, the uniqueness of $u(t)$ also helps us to assume that 
\begin{equation}
 \label{strictinequalityofenergy}
 \Escr[u(t)]<\Escr[u_0]
\end{equation}
for $t>0$. We now prove the lemma by way of contradiction. Suppose that $u(t)$ is not uniformly bounded in $H^{n/2}(\mathbb S^n)$ for $t\in[0,T)$. It follows from Lemma \ref{concentrationcompactness} that there exists some point $x_\infty\in \mathbb S^n$ satisfying \eqref{concentration} for all $r>0$. We have the following two possible cases:

\noindent\textbf{Case 1}. The case $\Sigma=\emptyset$. In this scenario, we can find some $\sigma\in G$ such that $\sigma(x_\infty)\neq x_\infty$. From the fact that $u(t)$ is $G$-invariant we find that
\begin{equation*}
 \lim_{k\to +\infty}\int_{B_r(\sigma(x_\infty))}fe^{nu(t_k)}\dvSn =\lim_{k\to +\infty}\int_{B_r(x_\infty)}fe^{nu(t_k)}\dvSn =(n-1)!\vol(\mathbb S^n),
\end{equation*}
which contradicts \eqref{concentration1}.

\noindent\textbf{Case 2}. The case $\Sigma\neq\emptyset$. Observe that if $x_\infty\not\in\Sigma$, then we arrive at a contradiction in the same way as above. Therefore, we are left with $x_\infty\in\Sigma$. In this scenario, we can estimate
\begin{equation}\label{integralonball}
\begin{split}
 \int_{B_r(x_\infty)}fe^{nu(t_k)}\dvSn \leqslant &\big( \sup_{x\in B_r(x_\infty)}f \big) \int_{B_r(x_\infty)}e^{nu(t_k)}\dvSn \\
 \leqslant &\max\Big(\sup_{x\in B_r(x_\infty)}f,0\Big)\int_{\mathbb S^n}e^{nu(t_k)} \dvSn
\end{split}
\end{equation}
for all $r>0$. From Beckner's inequality \eqref{BecknerInequality} and Lemma \ref{energydecay}, it follows that
\begin{equation}\label{upperboundofvolume}
 \frac{1}{\vol(\mathbb S^n)}\int_{\mathbb S^n}e^{nu(t_k)}\dvSn \leqslant \exp\Big(\frac{\Escr[u(t_1)]}{(n-1)!\vol(\mathbb S^n)}\Big).
\end{equation}
Plugging \eqref{upperboundofvolume} into \eqref{integralonball} and letting $k\to +\infty$ yield
\[
(n-1)!\leqslant\max\Big(\sup_{x\in B_r(x_\infty)}f,0\Big)\exp\Big(\frac{\Escr[u(t_1)]}{(n-1)!\vol(\mathbb S^n)}\Big) .
\]
Letting $r \searrow 0$ in the above inequality gives
\[
(n-1)!\leqslant\max\big(f(x_\infty),0\big)\exp\Big(\frac{\Escr[u(t_1)]}{(n-1)!\vol(\mathbb S^n)}\Big) .
\]
Consequently, we have $f(x_\infty)>0$ and therefore we get
\[
(n-1)!\leqslant f(x_\infty)\exp\Big(\frac{\Escr[u(t_1)]}{(n-1)!\vol(\mathbb S^n)}\Big) .
\]
Thanks to \eqref{strictinequalityofenergy}, we deduce from the preceding estimate that
\[
f(x_\infty)>(n-1)!\exp\Big(-\frac{\Escr[u_0]}{(n-1)!\vol(\mathbb S^n)}\Big),
\]
which contradicts the hypothesis of Theorem \ref{Criticalcase}. The proof is now complete.
\end{proof}

As can be seen from the proof of Lemma \ref{Hn/2bd-critical}, the condition \eqref{Initialdata1} for $\sup_\Sigma f$ plays some role. It is natural to ask what happens if we relax this condition. The following result provides an answer for this question.

\begin{proposition}\label{Hn/2bd-critical-description}
Suppose that the flow \eqref{eqFlow} is defined on $[0,T)$. Then there holds
\[
\inf_{0 \leqslant t < T} \Escr [u(t)] > -\infty.
\] 
\end{proposition}

\begin{proof}
We prove by way of contradiction. Suppose that 
\[
\inf_{0 \leqslant t < T} \Escr [u(t)]=-\infty.
\]
Because $|\int_{\mathbb S^n} u \dvSn|^2 \lesssim \int_{\mathbb S^n} u^2 \dvSn$, it is easy to see that the case (i) in Lemma \ref{concentrationcompactness} cannot happen. Hence, there must exist a sequence of $(t_k)$ with $t_k \to T$ as $k \to +\infty$ and a point $x_\infty\in \mathbb S^n$ such that \eqref{concentration} holds for all $r>0$. On the other hand, because the energy $\Escr [u(t_k)]$ is monotone decreasing, we then have
\[
\lim_{k \to +\infty} \Escr [u(t_k) ] = -\infty.
\]
Since for the case $\Sigma=\emptyset$ we simply argue as before to get a contradiction, we are left with the case $\Sigma\neq\emptyset$. Furthermore, we are also left with $x_\infty\in\Sigma$. Now we use Beckner's inequality \eqref{BecknerInequality} in the following way
\[
 \frac{1}{\vol(\mathbb S^n)}\int_{\mathbb S^n}e^{nu(t_k)}\dvSn \leqslant \exp\Big(\frac{\Escr[u(t_k)]}{(n-1)!\vol(\mathbb S^n)}\Big).
\]
Then we combine this with \eqref{integralonball} and letting $k \to +\infty$ in the resulting inequality to get
\[
(n-1)!\leqslant\max\Big(\sup_{x\in B_r(x_\infty)}f,0\Big)
 \lim_{k\to +\infty} \exp\Big(\frac{\Escr[u(t_k)]}{(n-1)!\vol(\mathbb S^n)}\Big) = 0.
\]
This is a contradiction and Proposition \ref{Hn/2bd-critical-description} is proved. 
\end{proof}

As we shall see later, once the energy functional $\Escr$ has a lower bound, the time-depending $H^{n/2}$-bound will hold, and then follow Brendle's argument in \cite{br} to get the global existence of the flow. This means that the condition \eqref{Initialdata1} is not necessary for the global existence. The said scheme will be carried out in the supercritical case in the next subsection.

\subsection{The supercritical case}

Now we turn out our attention to the supercritical case. In this scenario, it is not clear for us to get the uniform $H^{n/2}$-bound by direct estimates as we did before. As suggested by Proposition \ref{Hn/2bd-critical-description}, our argument here is to carefully exploit the connection between energy bound and $H^{n/2} (M)$-norm of the flow and we can merely obtain the time-depending $H^{n/2}$ bound in this subsection. 

Recall that we have assumed that $f>0$, however, it is worth noting that the positivity of $f$ is not necessary in Lemmas \ref{lemSizeOfIntegralOfLambda} and \ref{lowerbdofmeasureofAt} below, where we only need to assume that $\int_M fd\mu_{g(0)}>0$; see Remark \ref{rmk-positivityf} below. 

Now, as always, we let $T$ be the maximal time of existence of the flow \eqref{eqFlow} and let $T_0 \in [0, T)$ be arbitrary. Given any initial data $u_0\in C^\infty(M)$
we also let 
$$\tilde{\lambda}(t)=\lambda(t)-\frac{\int_MQ_0\dvg }{\int_Mfd\mu_{g(0)}}.$$
where $\lambda$ is given in \eqref{lambda}.
\begin{lemma}\label{lemSizeOfIntegralOfLambda}
Given any $\Uscr > 0$, if
\[
\int_M e^{n u_0} \dvg \leqslant \Uscr, \quad \Escr[u_0] <\Uscr, \quad 
\inf_{t \in [0,T_0]} \Escr[u(t)] > -\Uscr,
\]
then it follows that
\[
\int_0^{T_0} \big|\widetilde\lambda(t) \big| dt 
\leqslant n \Uscr (1+T_0) \Big(\int_M fd\mu_{g(0)}\Big)^{-1},
\]
or equivalently
\[
\int_0^{T_0}|\lambda(t)| dt \leqslant \Big( n \Uscr + \Big|\int_M Q_0 \dvg \Big| \Big)(1+T_0) \Big(\int_M fd\mu_{g(0)}\Big)^{-1}.
\]
\end{lemma}

\begin{proof}
First, we notice the following estimate 
\begin{align*}
 \frac{d}{dt}\int_M e^{nu} \dvg&= n\int_M u_te^{nu} \dvg
 \leqslant n\Big(\int_M u_t^2e^{nu} \dvg\Big)^{1/2} \Big (\int_M e^{nu} \dvg\Big)^{1/2}.
\end{align*}
Solving this differential inequality and using Lemma \ref{energydecay} we obtain
\begin{align}\label{volumeupbd}
\begin{aligned}
 \int_Me^{nu} \dvg&\leqslant \Big[\Big(\int_M e^{nu_0} \dvg\Big)^{1/2}+ \frac n2\int_0^{T_0}\Big(\int_M u_t^2e^{nu} \dvg\Big)^{1/2} dt\Big]^2 \\
 &\leqslant 2\Uscr+ \frac{n^2}2 T_0\int_0^{T_0}\int_M u_t^2e^{nu} \dvg dt \\
 &= 2\Uscr- \frac n2 T_0\int_0^{T_0}\frac{d}{dt}\Escr[u(t)]dt \\
 &= 2\Uscr + \frac n2 T_0 \big( \Escr[u_0] - \Escr[u(T_0)]\big) \\
 &\leqslant n \Uscr (1+T_0)
\end{aligned}
\end{align}
for any $t \in[0, T_0]$. On the other hand, by the flow equation \eqref{eqFlow}, \eqref{FixIntoff} and the conformal invariant of $\int_MQ_{g(t)}d\mu_{g(t)}$, we deduce that
\[\begin{split}
\int_M u_te^{nu} \dvg &= \lambda(t) \int_M fd\mu_{g(t)} - \int_M Q_{g(t)} d\mu_{g(t)} = \widetilde\lambda(t)\int_M fd\mu_{g(0)}.
\end{split}\]
This together with Lemma \ref{energydecay}, and \eqref{volumeupbd} gives
\begin{align*} 
\begin{aligned}
\Big(\widetilde\lambda(t)\int_M fd\mu_{g(0)}\Big)^2&= \Big|\int_M u_te^{nu} \dvg\Big|^2 \\
 &\leqslant \int_M u_t^2e^{nu} \dvg\int_M e^{nu} \dvg\\
 &\leqslant - n \Uscr (1+T_0)\frac{d}{dt}\Escr[u(t)].
\end{aligned}
\end{align*}
Keep in mind that $\int_M fd\mu_{g(0)}> 0$. The estimate above then yields that
\[\begin{split}
\int_0^{T_0} \big|\widetilde\lambda(t)\big|dt 
& \leqslant T_0^{1/2}\Big(\int_0^{T_0} \widetilde\lambda^2 (t)dt\Big)^{1/2} \leqslant n \Uscr (1+T_0) \Big(\int_M fd\mu_{g(0)}\Big)^{-1}.
\end{split}\]
Now by using triangle inequality, we easily obtain the last inequality. This completes the proof.
\end{proof}

The goal of our next step is to show that there exists a suitable initial data $u_0\in C^\infty(M)$ such that the energy has a lower bound along the flow \eqref{eqFlow} whenever it exists.
\begin{proposition}\label{propEnergyBound-Supercritical}
Suppose 
\[
(n-1)!\vol(\mathbb S^n) < \int_M Q_0\dvg \neq k(n-1)!\vol(\mathbb S^n),~~k\in\{2, 3, \dots\},
\]
and assume that $f>0$ everywhere. Then there exists some $u_0 \in C^\infty (M)$ such that the solution $u(t)$ to the flow \eqref{eqFlow} with the initial data $u_0$ enjoys the following
\[
\inf_{t \in [0,T)} \Escr[u(t)] > -\infty.
\]
\end{proposition}

\begin{proof}
We prove by contradiction. Given an initial data $u\in C^\infty(M)$ and let $\varphi(t,u)$ be the solution of \eqref{eqFlow} with $\varphi(0,u)=u$, namely
\[
\left\{
\begin{aligned}
\partial_t\varphi&= -(\Po\varphi+Q_0)e^{-n\varphi}+\lambda_\varphi(t)f , \quad t > 0,\\
\varphi(0,u)&= u,
\end{aligned}
\right.
\]
where 
$$\lambda_\varphi(t)=\frac{\int_Mf(\Po\varphi+Q_0) \dvg}{\int_Mf^2e^{n\varphi} \dvg}.$$
Let $[0,T_u)$ be the maximal existence interval of $\varphi (\cdot, u)$. We assume, by contradiction, that
\begin{equation}
 \label{energynobd}
 \inf_{t\in[0,T_u)}\Escr[\varphi(t,u)]=-\infty
\end{equation}
for any smooth initial data $u\in C^\infty(M)$. Let $E:=C^\infty(M)$ be equipped with its natural $C^\infty$-topology. We also define the sub-level set
\[
E_{-\gamma}:=\Big\{u\in E:\Escr[u]\leqslant-\gamma\Big\},
\]
where $\gamma$ is a positive constant. It follows from the decay property of $\Escr$ that $E_{-\gamma}$ is invariant under the flow $\varphi(t)$, that is, if $u\in E_{-\gamma}$, then $\varphi(t,u)\in E_{-\gamma}$ for all $t\in[0,T_u)$. Moreover, under the assumption on $\int_M Q_0\dvg $ we can make use of \cite[Proposition 4.4]{n2007} to show that there exists a sufficiently large constant $\gamma$ such that $E_{-\gamma}$ is not contractible. Let us fix such the constant $\gamma$. 

Next, we show that the flow $\varphi(t,u)$ defines a deformation retract from $E$ to $E_{-\gamma}$. Indeed, for each $u\in E$, by \eqref{energynobd} we can define
\[
t_u=\min\Big\{t\in[0,T_u):\Escr[\varphi(t,u)]\leqslant-\gamma\Big\}.
\]
It follows from the continuity of $\varphi$ that
$$\Escr[\varphi(t_u,u)]=-\gamma.$$
We then extend $\varphi$ on $[0,+\infty)$ by considering
\[
\widehat\varphi(t,u)=\left\{
\begin{aligned}
 &\varphi(t,u) & & \text{ if } t\in[0,t_u),\\ 
 &\varphi(t_u,u) & & \text{ if } t\geqslant t_u.
\end{aligned}
\right.
\]
In view of the decay property of the energy along the flow, we can obtain that $\widehat\varphi$ is continuous on $[0,+\infty)\times E$. Now, we define the homotopy map $H:[0,1]\times E\rightarrow E$ by
\[
H(t,u)=\left\{
\begin{aligned}
& \widehat\varphi \big(\frac{t}{1-t},u \big)& & \text{ if } t\in[0,1),\\ 
& \widehat\varphi(t_u,u) & & \text{ if } t=1.
\end{aligned}
\right.
\]
Let $u\in E$ be arbitrary. Then it is easy to see that 
\[
H(0,u)=\widehat\varphi(0,u)= \varphi(0,u)=u
\]
and 
\[
H(1,u) = \widehat\varphi(t_u,u) = \varphi(t_u,u) \in E_{-\gamma}.
\] 
Moreover, if we further let $u\in E_{-\gamma}$, then by the decay property of the energy along the flow $\varphi (\cdot, u)$ we deduce that $\varphi (t, u) \in E_{-\gamma}$ for all $t$. Thus, $t_u = 0$ and therefore by definition
\[
\widehat\varphi(t,u) = \varphi(0,u) = u 
\]
for any $t \geqslant 0$, giving the conclusion that $H(t,u)= u$ for any $t \in [0,1]$. This shows that $E_{-\gamma}$ is a strong deformation retract of $E$, and we obtain a contradiction, since $E$ is contractible as a topological vector space. 
\end{proof}

 To go further, we set
\[
A_t=\Big\{x\in M: u(x,t)\geqslant\alpha_0\Big\},
\]
where 
\begin{equation}\label{alpha0}
\alpha_0=\frac1n\log\Big(\frac{\int_M f d\mu_{g(0)} }{2 \vol(M) \sup_Mf}\Big).
\end{equation} 
Clearly, $\alpha_0$ is well-defined. We are going to estimate the size of $A_t$.
 
\begin{lemma}\label{lowerbdofmeasureofAt}
Given any $\gamma_0 > 0$, there exists a constant $C_1 > 0$ depending only on $\gamma_0$, $\sup_M |f|$, $n$, $\vol(M)$, $\int_M Q_0 \dvg$, and $\int_M f d\mu_{g(0)}$ such that if
\[
\int_M e^{n u_0} \dvg \leqslant \gamma_0, \quad \|u_0\|_{H^{n/2}(M)}\leqslant\gamma_0, \quad 
\inf_{t\in[0,T_0]}\Escr[u(t)]\geqslant-\gamma_0,
\]
then
\[
\vol (A_t) \geqslant\exp\big(-C_1 e^{C_1T_0}\big)
\]
for all $t\in[0,T_0]$.
\end{lemma}

\begin{proof}

By the assumption, we have
\[
\Escr[u ] = \frac n2\int_Mu\cdot \Po u \dvg + n\int_M Q_0u \dvg \geqslant-\gamma_0
\]
for all $t\in[0,T_0]$. This together with the conformal invariant of $\int_M Q_{g(t)} d\mu_{g(t)}$, the sign of $\int_M f d\mu_{g(0)}$, and the positivity of the operator $\Po$ implies that
\begin{align}\label{derivativeofintegralofuenu-negative}
\begin{aligned}
 \frac{d}{dt}\int_Mue^{nu} \dvg =&\int_Mu_te^{nu} \dvg+n\int_Mue^{nu}u_t \dvg \\
 =&\lambda(t)\int_Mfe^{nu} \dvg-\int_MQ_0 \dvg+n\lambda(t)\int_Mfue^{nu} \dvg \\
 &-n\int_M \big( u\cdot\Po u+Q_0u \big) \dvg \\
 =& \widetilde\lambda (t) \int_M f d\mu_{g(0)}-\frac n2\int_Mu\cdot\Po u \dvg-\Escr[u ] \\
 &+n\lambda(t)\int_Mfue^{nu} \dvg \\
 \leqslant& \widetilde\lambda (t) \int_M f d\mu_{g(0)}+n\lambda(t)\int_Mfue^{nu} \dvg+\gamma_0.
\end{aligned}
\end{align}
We split the term involving $\int_Mfue^{nu} \dvg$ as follows
\begin{align}\label{splitintopm-negative}
\begin{aligned}
\lambda(t)\int_Mfue^{nu} \dvg&= \lambda(t)\int_Mfu^+e^{nu} \dvg-\lambda(t)\int_Mfu^-e^{nu} \dvg =: \frac{I-II}n.
\end{aligned}
\end{align}

\noindent\textbf{Estimate of I}. To estimate this term, we first recall the elementary inequality $se^s\geqslant-1$, which holds for any real $s$. From this we know that the following pointwise estimates
\[
0 \leqslant u^-e^{nu} = u^-e^{-nu^-} e^{nu^+} \leqslant 1/n
\] 
hold. This together with the mean value theorem for integrals implies that 
\begin{align}\label{estimateofpositivepart-negative}
\begin{aligned}
 I&= n\lambda(t)f(p(t))\int_Mu^+e^{nu} \dvg \\
 &= n\lambda(t)f(p(t))\int_Mue^{nu} \dvg+n\lambda(t)f(p(t))\int_Mu^-e^{nu} \dvg \\
 &\leqslant n\lambda(t)f(p(t))\int_Mue^{nu} \dvg+|\lambda(t)|\sup_M |f| \vol(M).
\end{aligned}
\end{align}
where $p(t)$, depending on the time $t$, is some point in $M$.

\noindent\textbf{Estimate of II}. As in \eqref{estimateofpositivepart-negative}, we have the estimate
\begin{equation}\label{estimateofnegativepart-negative}
 II\leqslant n|\lambda(t)|\int_M\big|f\big|\big|u^-e^{nu}\big| \dvg\leqslant|\lambda(t)|\sup_M |f| \vol(M).
\end{equation}
Substituting \eqref{estimateofpositivepart-negative} and \eqref{estimateofnegativepart-negative} into \eqref{splitintopm-negative} yields
\begin{equation*}
n\lambda(t)\int_Mfue^{nu} \dvg\leqslant n\lambda(t)f(p(t))\int_Mue^{nu} \dvg+2|\lambda(t)|\sup_M\big|f\big| \vol(M).
\end{equation*}
Putting \eqref{derivativeofintegralofuenu-negative}, \eqref{splitintopm-negative}, \eqref{estimateofpositivepart-negative}, and \eqref{estimateofnegativepart-negative} together we arrive at the differential inequality
\begin{equation*}
\begin{split}
\frac{d}{dt}\int_Mue^{nu} \dvg\leqslant & n\lambda(t)f(p(t))\int_Mue^{nu} \dvg\\
&+\widetilde\lambda (t) \int_M f d\mu_{g(0)} + 2\sup_M |f| \vol(M) |\lambda (t)| + \gamma_0.
\end{split}
\end{equation*}
We set
\[
\sigma(t)=\int_Mue^{nu} \dvg.
\]
Then solving the differential inequality above gives
\[\begin{split}
\sigma(t)&\leqslant \sigma(0)\exp\Big(n\int_0^t\lambda(s)f(p(s))~ds\Big)\\
&+\int_0^t 
\left[
\begin{aligned}
&\widetilde\lambda (t) \int_M f d\mu_{g(0)} +\\
& 2\sup_M |f| \vol(M) |\lambda(s)| + \gamma_0 
\end{aligned}
\right] 
\exp\Big(n\int_s^t\lambda(\tau)f(p(\tau))~d\tau\Big)~ds
\end{split}\]
for all $t\in[0,T_0]$. Observe that
\[
\sigma(0)=\int_Mu_0e^{nu_0} \dvg \leqslant \int_M e^{(n+1)u_0} \dvg.
\]
Thanks to \eqref{TrudingerInequalityWithHnorm}, we conclude that $\sigma(0)$ is bounded from above by some number depending only on $\gamma_0$. In addition, we also have
\[
n\int_s^t\lambda(s)f(p(s)) ds \leqslant (n\sup_M |f|) \int_0^{T_0}|\lambda(t)| dt 
\]
for all $0 \leqslant s \leqslant t \leqslant T_0$. Therefore, the above estimate for $\sigma (t)$ can be simplified as 
\begin{equation}\label{upbdofsigma-negative}
 \sigma(t)\leqslant C_0 \Big(1+ \Big| \int_M f d\mu_{g(0)} \Big| \int_0^{T_0} \big| \widetilde\lambda (t) \big| dt +\int_0^{T_0}|\lambda(t)| dt\Big)\exp\Big(n\sup_M |f|\int_0^{T_0}|\lambda(t)| dt\Big)
\end{equation}
for some constant $C_0> 0$ depending only on $\gamma_0$, $\sup_M |f|$, $n$, and $\vol(M)$. Combining Lemma \ref{lemSizeOfIntegralOfLambda} and \eqref{upbdofsigma-negative} gives
\begin{equation}\label{upbdofsigma1-negative}
 \sigma(t)\leqslant C_0\exp(C_0T_0)
\end{equation}
for all $t\in[0,T_0]$ and for some new constant $C_0$ depending only on $\gamma_0$, $\sup_M |f|$, $n$, $\vol(M)$, $\int_M Q_0 \dvg$, and $\int_M f d\mu_{g(0)}$. Now, it follows from the inequality $se^s\geqslant-1$ that 
\[
\int_Aue^{nu} \dvg\leqslant \sigma(t) + \vol (M)
\]
for all $t\in[0,T_0]$ and for all $A\subset M$. Combining this and \eqref{upbdofsigma1-negative} gives
\begin{equation}\label{upbdofsigma2-negative}
\int_Aue^{nu} \dvg\leqslant C_0\exp(C_0T_0)
\end{equation}
for some new constant $C_0$ still depending only on $\gamma_0$, $\sup_M |f|$, $n$, $\vol(M)$, $\int_M Q_0 \dvg$, and $\int_M f d\mu_{g(0)}$. Now, we define
\[
\varphi : (0, +\infty) \to [-1/e, +\infty), \quad s \mapsto s\log s.
\] 
Then $\varphi$ is convex on $(0,+\infty)$ and it satisfies 
\[
s=\frac{\varphi(\mu s)}{\varphi(\mu)}-\frac{\varphi(s)}{\log\mu}
\]
for $\mu>0$ and $s>0$. Since $\varphi(s)\geqslant-1$ for $s>0$, we obtain
\begin{equation}\label{upbdofs}
 s\leqslant\frac{\varphi(\mu s)}{\varphi(\mu)}+\frac{1}{\log\mu}.
\end{equation}
Now we apply Jensen's inequality together with \eqref{upbdofsigma2-negative} to get
\begin{equation}\label{upbdofvarphiofintofenuonAt-negative}
 \varphi \Big(\frac{1}{\vol (A_t)}\int_{A_t}e^{nu} \dvg \Big) \leqslant \frac{1}{\vol (A_t)}\int_{A_t}\varphi\big(e^{nu}\big) \dvg\leqslant\frac{C_0\exp(C_0T_0)}{\vol (A_t)}.
\end{equation}
If $\vol (A_t) \geqslant1$, then the lemma is proved by taking any constant $C_0 > 0$. So, we are left with the case $\vol (A_t)<1$. Using \eqref{upbdofs} with $\mu=1/\vol (A_t)$ and $s=\int_{A_t}e^{nu} \dvg$ and by \eqref{upbdofvarphiofintofenuonAt-negative} we have 
\begin{equation} \label{upbdofintofenuonAt-negative}
\begin{split}
\int_{A_t}e^{nu} \dvg 
& \leqslant -\frac{\vol (A_t)}{\log \vol (A_t) }\varphi\Big(\frac{1}{\vol (A_t)}\int_{A_t}e^{nu} \dvg\Big) - \frac{1}{\log \vol (A_t) }\\
& \leqslant -\frac{C_0\exp(C_0T_0)}{\log \vol (A_t) }.
\end{split}
\end{equation}
Keep in mind that $\int_M f d\mu_{g(0)} > 0$. Clearly,
\[\begin{split}
\frac 1{\sup_M f} \int_M f d\mu_{g(0)} 
& \leqslant \int_{A_t}e^{nu} \dvg+\int_{M\backslash A_t}e^{nu} \dvg .
\end{split}\]
However, it follows from the definition of $\alpha_0$ in \eqref{alpha0} that
\[
\int_{M\backslash A_t}e^{nu} \dvg\leqslant e^{n\alpha_0} \vol(M)=\frac{1}{2\sup_Mf}\int_M f d\mu_{g(0)} .
\]
Therefore, combining the previous two estimates gives
\[
\int_{A_t}e^{nu} \dvg\geqslant\frac{1}{2\sup_Mf}\int_M f d\mu_{g(0)},
\]
This and \eqref{upbdofintofenuonAt-negative} yields
\[
\log\frac{1}{\vol (A_t)}\leqslant C_0\exp(C_0T_0)
\]
or equivalently,
\[
\vol (A_t) \geqslant\exp\big(-C_0e^{C_0T_0}\big).
\]
The proof is thus complete.
\end{proof}

To end this subsection, we try to get the time-depending bound for $u(t)$ in $H^{n/2}$-norm.

\begin{proposition}\label{Hn/2bd-supercritical}
Assume that $(n-1)!\vol(\mathbb S^n) <\int_MQ_0 \dvg \neq k(n-1)!\vol(\mathbb S^n),~~k\in\{2, 3, \dots\}$ and that $f>0$ everywhere. Then there exist an initial data $u_0\in C^\infty(M)$ and a positive constant $C_0$ depending only on $f$, $Q_0$, $n$, and $(M,g_0)$ such that
\begin{equation}\label{Hn2bdofu}
 \sup_{t\in[0,T)}\|u(t)\|_{H^{n/2}(M)}\leqslant\exp(C_0e^{C_0T}).
\end{equation}
\end{proposition}
\begin{proof}
To prove the bound, we need to control $ \int_Mu\cdot\Po u$ and $\|u \|_{L^2(M)}$. For any $t\in[0,T_0] \subset [0,T)$ we have
\begin{equation}\label{splitofbaru}
 \Big|\int_Mu \dvg\Big| \leqslant \Big|\int_{A_t}u \dvg\Big|+\Big|\int_{M\backslash A_t}u \dvg\Big| =: I+II.
\end{equation}
To estimate $I$, we simply make use of the elementary inequality $s\leqslant e^s$ and \eqref{volumeupbd} to get
\[
\int_{A_t}u \dvg\leqslant\frac1n\int_{A_t}e^{nu} \dvg\leqslant\frac1n\int_{M}e^{nu} \dvg\leqslant n \gamma_0(1+T_0).
\]
On the other hand, by the definition of $A_t$ we have
\[
\int_{A_t}u \dvg\geqslant-|\alpha_0| \vol( M ).
\]
Therefore, by letting $C_1 = n \gamma_0 + |\alpha_0| \vol (M)$, we get
\[
I\leqslant C_1 (1+T_0).
\]
For the estimate of $II$, we use H\"older's inequality to get 
\[
II\leqslant \vol(M\backslash A_t)^{1/2} \|u(t)\|_{ L^2 (M) }.
\]
Plugging the estimates of $I$ and $II$ into \eqref{splitofbaru} yields
\[
\Big|\int_Mu \dvg\Big|\leqslant \vol(M\backslash A_t)^{1/2} \|u \|_{ L^2 (M) }+C_1(1+T_0).
\]
This together with the Young's inequality implies that for any $\epsilon>0$
\begin{equation}\label{bdofbarusquare}
\Big(\int_Mu \dvg\Big)^2\leqslant(1+\epsilon)\vol(M\backslash A_t)~\|u \|_{ L^2 (M) }^2+(1+\epsilon^{-1})C_1(1+T_0)^2.
\end{equation}
Now, it follows from Poincar\'{e}'s inequality \eqref{eqPositivityOfP_0} that
\begin{equation}\label{L2bdofu}
\|u \|_{ L^2 (M) }^2\leqslant\frac{1}{\lambda_1}\int_Mu \cdot\Po u \dvg+\frac{1}{\vol (M)}\Big(\int_Mu \dvg \Big)^2.
\end{equation}
Combining \eqref{bdofbarusquare} and \eqref{L2bdofu} gives
\[
\Big(1-\frac{(1+\epsilon) \vol(M\backslash A_t) }{ \vol( M)}\Big)\|u \|_{L^2(M)}^2\leqslant\frac{1}{\lambda_1}\int_Mu \cdot\Po u \dvg+(1+\epsilon^{-1})C_1(1+T_0)^2,
\]
that is,
\begin{equation*}
\begin{split}
 \big(\vol(A_t)- \epsilon \vol(M\backslash A_t) \big)\|u \|_{L^2(M)}^2 \leqslant &\frac{ \vol( M)}{\lambda_1} \int_Mu \cdot\Po u \dvg \\
 & +(1+\epsilon^{-1}) \vol( M) C_1(1+T_0)^2.
\end{split}
\end{equation*}
Notice that from Lemma \ref{lowerbdofmeasureofAt} we have $\vol(A_t) \geqslant\exp(-C_0e^{C_0T_0})$. Hence, by choosing 
\[
\epsilon=\exp(-C_0e^{C_0T_0})/(2 \vol(M) )
\]
and observing the fact that $\vol(M\backslash A_t) \leqslant \vol(M)$ we get
\[
\vol(A_t)- \epsilon \vol(M\backslash A_t) \geqslant \frac 12 \exp(-C_0e^{C_0T_0}),
\]
which implies that
\begin{equation}\label{L2bdofu2}
\|u \|_{L^2(M)}^2\leqslant C_2\Big(\int_Mu \cdot\Po u \dvg+1\Big)\exp(C_0e^{C_0T_0})
\end{equation}
for some constant $C_2 > 0$ depending only on $C_0$, $C_1$, $T_0$, $\vol (M)$, and $\lambda_1$. In view of the energy bound $-\gamma_0 \leqslant \Escr[u (t)] \leqslant \Escr[u_0]$ and H\"older's inequality, we can bound
\begin{equation}\label{bdofintofuPou}
 \int_Mu\cdot\Po u \dvg\leqslant C_3 \|u \|_{L^2(M)}+C_3.
\end{equation}
for some constant $C_3>0$ depending only on $\gamma_0$ and $\int_M Q_0^2 \dvg$. Substituting \eqref{bdofintofuPou} into \eqref{L2bdofu2} gives
\[
\|u \|_{L^2(M)}^2\leqslant C_2 (C_3+1)\big(\|u \|_{L^2(M)}+1\big)\exp(C_0e^{C_0T_0}).
\]
However, this is enough to conclude that
\begin{equation}\label{L2bdofu3}
 \|u \|_{L^2(M)}\leqslant\exp(C_0e^{C_0T_0}).
\end{equation}
for some constant $C_0>0$ depending only on $\gamma_0$, $f$, $(M,g_0)$, and $n$. Going back to \eqref{bdofintofuPou}, the bound in \eqref{L2bdofu3} immediately gives us a bound for $ \int_Mu\cdot\Po u$; hence concluding \eqref{Hn2bdofu} as claimed. The proof is complete.
\end{proof} 

\begin{remark}\label{rmk-positivityf}
 In the statement of Proposition \ref{Hn/2bd-supercritical}, the extra condition $f>0$ everywhere is technical because we do not know if the initial data $u_0$ found by Proposition \ref{propEnergyBound-Supercritical} enjoys the requirement $\int_M f d\mu_{g(0)} \ne 0$, which is important if one wants to know the sign of the limit $\lambda_\infty$. This forces us to assume $f$ having a sign, which is assumed to be positive due to the assumption on $\int_M Q_0\dvg$. In other words, there always holds
 \[
 \int_M f e^{nu_0} \dvg > 0.
 \] 
\end{remark}
 

\section{Boundedness of \texorpdfstring{$u(t)$}{Lg} in \texorpdfstring{$H^n (M)$}{Lg} for $0 \leqslant t < T$} 
\label{sec-BoundInH^n}

Now we improve our almost-uniform boundedness of $u(t)$ in $H^{n/2}(M)$ established in Section \ref{sec-BoundInH^(n/2)} to the same in higher-order Sobolev space. In this section, we improve the boundedness of $u(t)$ up to $H^n(M)$. 

The first step in this procedure is to show that $\lambda(t)$ is pointwise bounded on the maximal interval of existence of the flow. This can be regarded as an improvement of Lemma \ref{lemSizeOfIntegralOfLambda}.

\begin{lemma} \label{boundoflambda}
For the subcritical case or $M=\mathbb S^n$, then there exists a universal constant $\Cscr_\ell>0$ such that
\[
|\lambda (t)|\leqslant \Cscr_\ell.
\]
While, for the supercritical case, there exists a constant $\Cscr_\ell(T)>0$ depending on the maximal time of existence $T$, such that
\[
|\lambda (t)|\leqslant \Cscr_\ell (T)
\]
for all $t \in [0,T)$.
\end{lemma}

\begin{proof}
From the definition of $\lambda (t)$ in \eqref{lambda}, to bound $\lambda (t)$ it suffices to bound $\int_MfQd\mu_{g(t)}$ from above and $\int_Mf^2d\mu_{g(t)}$ from below positively away from zero. By the expression of $Q$-curvature in \eqref{qcurvartureequation} we conclude that
\begin{equation}\label{boundofintegraloffq}
\begin{split}
\Big|\int_MfQ_{g(t)}d\mu_{g(t)}\Big| =&\Big|\int_Mf(\Po u+Q_0) \dvg \Big| = \Big|\int_M \big[u\cdot \Po f+fQ_0 \big] \dvg \Big| \\
\leqslant&\sup_M\big(|\Po f|\big)\| u (t)\| _{H^{n/2}(M )}\sqrt{\vol(M)}+\sup_M|fQ_0|\vol(M).
\end{split}
\end{equation}
Thus \eqref{boundofintegraloffq} and the boundedness of $u(t)$ in $H^{n/2}(M )$ established in Section \ref{sec-BoundInH^(n/2)} provide us an upper bound for $\int_MfQ_{g(t)}d\mu_{g(t)}$. To bound $\int_Mf^2d\mu_{g(t)}$ from below, first we denote $\sigma=\int_Mf^2 \dvg >0$. Then, by Jensen's inequality we can estimate
\begin{equation}\label{lowerboundofintegraloff2}
\begin{split}
\int_Mf^2d\mu_{g(t)} =&\int_Me^{nu}f^2 \dvg =\sigma \Big( \frac 1\sigma \int_Me^{nu}f^2 \dvg \Big) \\
 \geqslant&\sigma\exp\Big( \frac n\sigma \int_Muf^2 \dvg \Big) \\
 \geqslant&\sigma\exp\Big(- \frac n{2\sigma}\int_M \big[ u^2+f^4 \big] \dvg \Big) \\
\geqslant&\sigma\exp\Big(-\frac n{2\sigma} \Big(\| u\| ^2_{H^{n/2}(M )}+\sup_M(f^4)\vol(M)\Big)\Big).
\end{split}
\end{equation}
Again the boundedness of $u(t)$ in $H^{n/2}(M )$ tells us that $\int_Mf^2d\mu_{g(t)}$ is bounded from below. Now, the proof follows from \eqref{boundofintegraloffq} and \eqref{lowerboundofintegraloff2}.
\end{proof}

Next, we follow the argument in \cite{br} to bound $\|u \|_{H^n(M)}$. To this end, first by standard elliptic estimates we know that
\[
\|u \|_{H^n(M)} \lesssim \|\Po u \|_{L^2(M)} + \|u \|_{L^2(M)} .
\]
Using the continuous embedding $H^{n/2}(M) \hookrightarrow L^2(M)$ together with the boundedness of $u(t)$ in $H^{n/2}(M)$ we deduce that
\[
\|u \|_{H^n(M)} \lesssim \|\Po u \|_{L^2(M)} +1 .
\]
Hence, to bound $\|u \|_{H^n(M)}$, it suffices to bound $\int_M (\Po u)^2 \dvg$ from above. Let $T_0 \in (0, T)$ be fixed, positive real number and assume that $u(t)$ is defined on $[0,T_0]$. For brevity, we let
\[
v = -e^{nu/2} \big( Q_{g(t)} - \lambda (t) f\big).
\]
Then it is not hard to verify that
\[
\left\{
\begin{aligned}
\frac{\partial}{\partial t} u & = e^{-nu/2} v,\\
\Po u & =-e^{nu/2} v - Q_0 + e^{nu} \lambda (t) f, \\
\frac{\partial}{\partial t} \Po u & = \Po \Big( \frac{\partial}{\partial t} u \Big) = \Po \big( e^{-nu/2} v \big).
\end{aligned}
\right.
\]
Consequently, we can compute the time derivative of $\int_M (\Po u)^2 \dvg$ as follows
\begin{equation}\label{eqHighOrderDecomposition1}
\begin{split}
\frac d{dt} \Big( \int_M (\Po u)^2 \dvg\Big) = & 2 \int_M \Po u \cdot \Po \big( e^{-nu/2} v \big) \dvg \\
 = & -2\int_M e^{nu/2} v \cdot \Po \big( e^{-nu/2} v \big) \dvg\\
& - 2 \int_M Q_0 \Po \big( e^{-nu/2} v \big) \dvg \\
& + 2\lambda (t) \int_M e^{nu} f \Po \big( e^{-nu/2} v \big) \dvg .
\end{split}
\end{equation}
To be able to examine this time derivative, we further estimate \eqref{eqHighOrderDecomposition1} as follows
\begin{equation*}
\begin{split}
\frac d{dt} \Big( \int_M (\Po u)^2 \dvg\Big) = &-2\int_M (-\Delta_0 )^{n/4} \big(e^{nu/2} v \big) (-\Delta_0 )^{n/4} \big( e^{-nu/2} v \big) \dvg\\
& - 2 \int_M \big( (-\Delta_0 )^{n/4} Q_0\big) (-\Delta_0 )^{n/4} \big( e^{-nu/2} v \big) \dvg \\
& + 2\lambda (t) \int_M (-\Delta_0 )^{n/4} \big( e^{nu} f \big) (-\Delta_0 )^{n/4} \big( e^{-nu/2} v \big) \dvg \\
& + \text{lower order terms}.
\end{split}
\end{equation*}
Note that the right-hand side of \eqref{eqHighOrderDecomposition1} involves derivatives of $u$ and $v$ of order at most $n/2$ and the total number of derivatives is at most $n$. In view of Lemma \ref{boundoflambda}, we obtain
\begin{equation}\label{eqDerivativesOfP_0u}
\begin{split}
\frac d{dt} \Big( \int_M (\Po u)^2 \dvg\Big) \leqslant & -2\int_M \big( (-\Delta_0 )^{n/4} v \big)^2 \dvg \\
&+\Cscr \sum_{k_1,...,k_m} \int_M | \nabla_0^{k_1} v| \, | \nabla_0^{k_2} v| \, | \nabla_0^{k_3} u| \cdots |\nabla^{k_m} u| \dvg\\
&+\Cscr \sum_{l_1,...,l_m} \int_M | \nabla_0^{l_1} v| \, | \nabla_0^{l_2}u| \cdots |\nabla^{l_m} u| e^{\alpha (l_1,...,l_m) u} \dvg.
\end{split}
\end{equation}
Note that in \eqref{eqDerivativesOfP_0u}, the first sum is taken over all $m$-tuples $(k_1, ..., k_m)$ with $m \geqslant 3$ satisfying $0 \leqslant k_i \leqslant n/2$ for $1 \leqslant i \leqslant 2$ and $1 \leqslant k_i \leqslant n/2$ for $3 \leqslant i \leqslant m$ with $\sum_{i=1}^m k_i \leqslant n$ and the second sum is taken over all $m$-tuples $(l_1, ..., l_m)$ with $m \geqslant 1$ satisfying $0 \leqslant l_1 \leqslant n/2$ and $1 \leqslant l_i \leqslant n/2$ for $2 \leqslant i \leqslant m$ with $\sum_{i=1}^m l_i \leqslant n$.

Next, we estimate the right hand side of \eqref{eqDerivativesOfP_0u} term by term. Following \cite[Section 4]{br}, we obtain the following two fundamental estimates
\begin{equation}\label{propEstimateOfTheFirstSum}
\begin{split}
- \frac 12 \int_M \big( (-\Delta_0 )^{n/4} v \big)^2 \dvg &+ \Cscr \sum_{k_1,...,k_m} \int_M | \nabla_0^{k_1} v| | \nabla_0^{k_2} v| | \nabla_0^{k_3} u| \cdots |\nabla^{k_m} u| \dvg\\
& \leqslant C \|v\|_{ L^2 (M) }^2 \big( \|u\|_{H^n(M )}^2 +1 \big)
\end{split}
\end{equation}
and
\begin{equation}\label{propEstimateOfTheSecondSum}
\begin{split}
- \frac 12 \int_M \big( (-\Delta_0 )^{n/4} v \big)^2 \dvg &+ \Cscr \sum_{l_1,...,l_m} \int_M | \nabla_0^{l_1} v| | \nabla_0^{l_2} w| \cdots |\nabla^{l_m} u| e^{\alpha u} \dvg\\
& \leqslant C \big( \|v\|_{ L^2 (M) }^2 +1 \big) \big( \|u\|_{H^n(M )}^2 +1 \big).
\end{split}
\end{equation}
Then combining \eqref{eqDerivativesOfP_0u}, \eqref{propEstimateOfTheFirstSum}, and \eqref{propEstimateOfTheSecondSum} gives
\[
\frac d{dt} \Big( \int_M (\Po u)^2 \dvg\Big) \leqslant C \big( \|v\|_{ L^2 (M) }^2 +1 \big) \big( \|u\|_{H^n(M )}^2 +1 \big)
\]
for some constant $C$ independent of $t$. Observe that
\[
\|v\|_{ L^2 (M) }^2 = 4 \int_M e^{nu} \big( Q_{g(t)} - \lambda (t) f \big)^2 \dvg
\]
and there holds $\|u\|_{H^n(M)}^2 \lesssim \int_M (\Po u)^2 \dvg + 1$. Hence,
\[
\frac d{dt} \Big( \int_M (\Po u)^2 \dvg + 1 \Big) \leqslant C \Big( 1 + \int_M \big( Q_{g(t)} - \lambda (t) f \big)^2 d\mu_{g(t)} \Big) \Big( 1 + \int_M (\Po u)^2 \dvg \Big),
\]
which implies
\[
\frac d{dt} \log \Big( \int_M (\Po u)^2 \dvg + 1 \Big) \leqslant C \Big( 1 + \int_M \big( Q_{g(t)} - \lambda (t) f \big)^2 d\mu_{g(t)} \Big) .
\]
Upon integrating both sides of the preceding inequality over $[0,T_0]$ and using Lemma \ref{energydecay}, we conclude that
\begin{equation}\label{eqP_0InL^2}
\int_M (\Po u)^2 \dvg \leqslant C(T_0)
\end{equation}
for all $t \in [0, T_0]$. Thus, combining \eqref{eqPositivityOfP_0} and \eqref{eqP_0InL^2} gives the uniform boundedness of $u(t)$ in $H^n(M )$ as claimed. Thus, we have just proved the following result.

\begin{lemma}\label{lemH^nBoundedInFixedTime}
For any solution $u(t)$ to the flow \eqref{eqFlow}, there exists a constant $\Cscr (T)>0$ such that 
\[
\sup_{0 \leqslant t < T} \|u(t)\|_{H^n(M )} \leqslant \Cscr (T).
\]
\end{lemma}

 Note that if the solution $u(t)$ to the flow is uniformly bounded in $H^n(M )$ for every fixed time interval $[0, T_0]$, then by Sobolev's inequality it is easy to show that along the flow $u(t)$ is also pointwise bounded in $[0, T_0]$. Therefore we have the following simple corollary.

\begin{corollary}\label{corL^inftyBound}
For any solution $u(t)$ to \eqref{eqFlow}, there exists a constant $\Cscr (T)>0$ such that
\[
\sup_{ 0 \leqslant t< T} |u(t)| \leqslant \Cscr (T). 
\]
\end{corollary}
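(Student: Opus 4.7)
The plan is to deduce the pointwise bound directly from the $H^n(M,g_0)$-bound supplied by Lemma \ref{lemH^nBoundedInFixedTime} via a Sobolev embedding argument. Since the underlying manifold $M$ has dimension $n$, the critical Sobolev exponent for continuous embedding into $L^\infty$ is $n/2$; as $n > n/2$ for any even $n \geqslant 4$, one has the continuous embedding
\[
H^{n}(M,g_0) \hookrightarrow C^0(M),
\]
that is, there exists a constant $\Cscr_S = \Cscr_S(M,g_0)$ such that
\[
\sup_{x \in M} |\varphi(x)| \leqslant \Cscr_S \, \|\varphi\|_{H^n(M,g_0)}
\]
for every $\varphi \in H^n(M,g_0)$.

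Applying this inequality to $\varphi = u(\cdot, t)$ for each $t \in [0,T]$ and taking the supremum in $t$, one obtains
\[
\sup_{0 \leqslant t \leqslant T} \sup_{x \in M}|u(x,t)| \leqslant \Cscr_S \sup_{0 \leqslant t \leqslant T} \|u(t)\|_{H^n(M,g_0)} < + \infty,
\]
where the finiteness of the right-hand side is exactly the conclusion of Lemma \ref{lemH^nBoundedInFixedTime}. Setting $\Cscr(T)$ to be (say, twice) this quantity yields the desired bound. There is essentially no obstacle here; the corollary is a direct functional-analytic consequence of the preceding lemma, and the only mild point to check is the consistency of the Sobolev embedding constant with the fixed background metric $g_0$, which is immediate since $g_0$ is independent of $t$ and $M$ is compact.
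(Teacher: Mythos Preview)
Your proof is correct and follows exactly the approach indicated in the paper: the sentence immediately preceding the corollary states that the pointwise bound follows from Lemma~\ref{lemH^nBoundedInFixedTime} ``by the Sobolev inequality,'' which is precisely the embedding $H^n(M,g_0)\hookrightarrow C^0(M)$ you invoke.
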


In the next section, we turn our $H^n$-boundedness into even higher-order Sobolev spaces. Having such a boundedness, we obtain a global existence of the flow \eqref{eqFlow}.


\section{Boundedness of \texorpdfstring{$u(t)$}{Lg} in \texorpdfstring{$H^{2k}(M)$}{Lg} for $0 \leqslant t < T$ and the long time existence of the flow}
\label{sec-BoundInH^2k}

The aim of this section is to strengthen the $H^n$-boundedness of $u(t)$ obtained in Section \ref{sec-BoundInH^n} above. What we are going to prove is that $u(t)$ is bounded in $H^{2k} (M )$ for any $k>2$, which is crucial to claim that the flow is defined to all time. Again, it is worth noting that the argument below does not depend on the size of $\int_M Q_0 \dvg$. To realize this, we follow the argument in \cite{br}.

In view of \eqref{eqNormH^p}, to estimate $u(t)$ in $H^{2k}$-norm, we need to estimate $\int_M |(-\Delta_0 )^k u |^2 \dvg$. Using the flow equation $\partial_t u(t) =\lambda (t) f - Q_{g(t)} $, for some constant $\Cscr>0$, we obtain
\begin{equation}\label{eqHigherOrder}
\begin{split}
\frac{d}{dt} \left( \int_M |(-\Delta_0 )^k u |^2 \dvg\right) = & 2 \int_M (-\Delta_0 )^k u (-\Delta_0 )^k \big( \partial_t u \big) \dvg \\
= & - 2\int_M \big( (-\Delta_0 )^k u \big) (-\Delta_0 )^k \big( e^{-nu} \Po u \big) \dvg \\
& - 2\int_M \big( (-\Delta_0 )^k u \big) (-\Delta_0 )^k \big( e^{-nu} Q_0 \big) \dvg \\
& + 2\lambda (t) \int_M \big( (-\Delta_0 )^k u \big) (-\Delta_0 )^k f \dvg \\
 \leqslant & -2\int_M e^{-nu} \big| (-\Delta_0 )^{k+n/4} u \big|^2 \dvg \\
&+\Cscr \sum_{k_1,...,k_m} \int_M | \nabla^{k_1} u| \cdots |\nabla^{k_m} u| \dvg
\end{split}
\end{equation}
for all $t \geqslant 0$. Here the sum is taken for all tuples $(k_1,...,k_m)$ with $m \geqslant 3$ satisfying $1 \leqslant k_i \leqslant 2k+n/2$ and $\sum_{i=1}^m k_i \leqslant 4k+n$. We now choose real numbers $p_i \in [2,+\infty)$ such that $k_i \leqslant 2k +n/{p_i}$ and $\sum_{i=1}^m 1/{p_i} = 1$. Then we set
\[\theta_i = \max \left\{ \frac{k_i - n/p_i - n/2 }{2k-n/2}, 0 \right\}\]
for each $i=1,..,m$. Since $m \geqslant 3$, we immediately have $\sum_{i=1}^m \theta_i < 2$. Now we estimate \eqref{eqHigherOrder}
First, in view of Corollary \ref{corL^inftyBound}, we can further estimate the left hand side of \eqref{eqHigherOrder} to obtain
\begin{equation}\label{eqHigherOrderNew}
\begin{split}
\frac{d}{dt} \left( \int_M |(-\Delta_0 )^k u |^2 \dvg\right) \leqslant & - \Cscr \int_M |(-\Delta_0 )^{k+n/4} u |^2 \dvg \\
&+\Cscr \sum_{k_1,...,k_m} \int_M | \nabla^{k_1} u| \cdots |\nabla^{k_m} u| \dvg
\end{split}
\end{equation}
for some new constant $\Cscr$. By repeatedly using \eqref{eqHigherOrderNew} for suitable $k$ and H\"{o}lder's inequality, we arrive at
\begin{equation}\label{eqHigherOrderH2k}
\begin{split}
\frac{d}{dt} \|u\|_{H^{2k} (M )}^2 \leqslant & - \Cscr \|u\|_{H^{2k+\frac n2} (M )}^2 \\
&+\Cscr \sum_{k_1,...,k_m} \| \nabla^{k_1} u\|_{L^{p_1}(M )} \cdots \|\nabla^{k_m} u \|_{L^{p_m}(M )}
\end{split}
\end{equation}
for some new constant $\Cscr$. Thanks to \eqref{eqGagliardoNirenbergInequality}, we obtain
\[
\| \nabla^{k_i} u\|_{L^{p_i} (M )} \leqslant \Cscr \|u\|_{H^{k_i - \frac n{p_i} + \frac n2}(M )} .
\]
Again, we apply \eqref{eqGagliardoNirenbergInequality} to obtain
\[
\|u\|_{H^{k_i - \frac n{p_i} +\frac n2}(M )} \leqslant \Cscr \|u\|_{H^n (M )}^{1-\theta_i} \|u\|_{H^{2k+\frac n2}(M )}^{\theta_i} .
\]
By combining the last two estimates above and the boundedness of $u(t)$ in $H^n(M )$ established in Lemma \ref{lemH^nBoundedInFixedTime}, we get
\[
\| \nabla^{k_i} u\|_{L^{p_i} (M )} \leqslant \Cscr \|u\|_{H^{2k+ n/2}(M )}^{\theta_i}.
\]
This together with \eqref{eqHigherOrderH2k} implies that
\begin{equation}\label{eqHigherOrderH2kNew}
\begin{split}
\frac{d}{dt} \|u\|_{H^{2k} (M )}^2 \leqslant & - \Cscr \|u\|_{H^{2k+\frac n2} (M )}^2 +\Cscr \sum_{k_1,...,k_m} \|u\|_{H^{2k+\frac n2} (M )}^{\theta_1 + \cdots \theta_m}.
\end{split}
\end{equation}
Since $\sum_{i=1}^m \theta_i < 2$, from \eqref{eqHigherOrderH2kNew} we have by Young's inequality that
\[
\frac{d}{dt} \|u\|_{H^{2k} (M )}^2 \leqslant - \Cscr \|u\|_{H^{2k+\frac n2} (M )}^2 +\Cscr \leqslant - \Cscr \|u\|_{H^{2k} (M )}^2 +\Cscr,
\]
where in the last estimate, we have used the embedding $H^{2k+1} (M ) \hookrightarrow H^{2k} (M )$. From this, it is routine to get
\[ 
\|u\|_{H^{2k} (M )} \leqslant \Cscr_H(k,T)
\]
for some constant $\Cscr_H(k,T)$ as claimed. Thus, we have just finished proving the following result.

\begin{lemma}\label{lemH^2kBoundedInTime}
For any solution $u(t)$ to the flow \eqref{eqFlow}, there exists a constant $\Cscr_H(k, T)$ depending on $k$ and $T$ such that
\[\sup_{0 \leqslant t < T} \|u(t)\|_{H^{2k} (M )} \leqslant \Cscr_H(k,T) \]
for any $k>1$.
\end{lemma}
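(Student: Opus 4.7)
The plan is to induct on $k \geqslant n/2$, taking as base case the $H^n$-bound of Lemma \ref{lemH^nBoundedInFixedTime} together with the pointwise bound of Corollary \ref{corL^inftyBound}. First I would differentiate $\int_M |(-\Delta_0)^k u|^2 \dvg$ along the flow, substituting $\partial_t u = \lambda(t) f - e^{-nu}(\Po u + Q_0)$ coming from \eqref{qcurvartureequation}. The contributions from $\lambda(t) f$ and $e^{-nu} Q_0$ are manifestly tame: $f$ and $Q_0$ are smooth and $\lambda$ is uniformly bounded by Lemma \ref{boundoflambda}, so the only nontrivial term is the one generated by $(-\Delta_0)^k(e^{-nu}\Po u)$.

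Next I would peel off the coercive contribution from this leading term. Because $\Po$ has principal part $(-\Delta_0)^{n/2}$ and because $e^{-nu} \geqslant \Cscr(T)^{-1}$ on $[0,T]$ by Corollary \ref{corL^inftyBound}, the Leibniz rule together with integration by parts produces a definite negative piece of size $-\Cscr \|u\|_{H^{2k+n/2}(M,g_0)}^2$, plus a multilinear remainder of schematic form
\[
\Cscr \sum_{k_1,\ldots,k_m} \int_M |\nabla^{k_1} u| \cdots |\nabla^{k_m} u| \dvg,
\]
where $m \geqslant 3$, $1 \leqslant k_i \leqslant 2k + n/2$, and $\sum_{i=1}^m k_i \leqslant 4k + n$. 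Each such summand I would estimate by H\"older with exponents $p_i \in [2, \infty)$ chosen so that $\sum 1/p_i = 1$ and $k_i \leqslant 2k + n/p_i$, followed by the Gagliardo--Nirenberg inequality \eqref{eqGagliardoNirenbergInequality} twice, interpolating between the inductive $H^n$-bound and the top scale $H^{2k + n/2}$. This yields $\|\nabla^{k_i} u\|_{L^{p_i}} \leqslant \Cscr \|u\|_{H^{2k + n/2}}^{\theta_i}$ with $\theta_i = \max\{(k_i - n/p_i - n/2)/(2k - n/2),\, 0\}$.

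The decisive combinatorial fact is $\sum_{i=1}^m \theta_i < 2$, which holds because $m \geqslant 3$ together with $\sum k_i \leqslant 4k + n$, and this strict gap is exactly what permits Young's inequality to absorb every multilinear term into the coercive piece at the cost of an additive constant depending on the inductive $H^n$-bound. Combining everything and using the embedding $H^{2k + n/2} \hookrightarrow H^{2k}$ produces the closed differential inequality
\[
\frac{d}{dt} \|u\|_{H^{2k}(M,g_0)}^2 \leqslant -\Cscr \|u\|_{H^{2k}(M,g_0)}^2 + \Cscr,
\]
from which a standard Gronwall argument delivers the desired uniform bound on $[0, T]$ and closes the induction.

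The main obstacle I expect is not any single estimate but the bookkeeping: one must verify that \emph{every} multilinear term spawned by differentiating $e^{-nu}\Po u$ exactly $k$ times satisfies $\sum_i \theta_i < 2$ under an admissible choice of $p_i$'s. This is precisely where the hypothesis $m \geqslant 3$ is essential; the $m \leqslant 2$ contributions either sit on the coercive side or are absorbed into the tame $Q_0$ and $\lambda f$ pieces discussed in the first paragraph. A subsidiary care-point is that the lower bound on $e^{-nu}$ is only $T$-dependent via Corollary \ref{corL^inftyBound}, which forces $\Cscr_H(k, T)$ to depend on $T$ and is the reason this argument alone does not yield a time-uniform estimate.
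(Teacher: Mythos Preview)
Your proposal is correct and follows essentially the same approach as the paper: differentiate $\int_M |(-\Delta_0)^k u|^2 \dvg$, extract the coercive term $-\Cscr\|u\|_{H^{2k+n/2}}^2$ using the pointwise lower bound on $e^{-nu}$ from Corollary \ref{corL^inftyBound}, estimate the multilinear remainder via H\"older and Gagliardo--Nirenberg interpolation between $H^n$ and $H^{2k+n/2}$ with the exponents $\theta_i$ you wrote, use $\sum_i \theta_i < 2$ to absorb, and close with the differential inequality $\frac{d}{dt}\|u\|_{H^{2k}}^2 \leqslant -\Cscr\|u\|_{H^{2k}}^2 + \Cscr$. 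One cosmetic remark: the paper does not actually run an induction on $k$; for every $k$ it interpolates against the fixed $H^n$-bound of Lemma \ref{lemH^nBoundedInFixedTime}, which is exactly what you end up doing as well, so your ``induction'' framing is harmless but unnecessary.
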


An immediate consequence of Lemma \ref{lemH^2kBoundedInTime} above is that the flow \eqref{eqFlow}, with appropriate initial data, cannot blow up in finite time.

\begin{proposition}\label{corFlowExistsForAllTime}
Given an initial data $u_0$ satisfying \eqref{Initialdata} or \eqref{Initialdata1} when $M=\mathbb S^n$ and $\Sigma\neq\emptyset$, the flow \eqref{eqFlow} has a smooth solution which is defined to all time.
\end{proposition}






\section{Improved $H^{n/2}$-bound for $u(t)$ in the supercritical case}
\label{sec-UniformBoundInH^2k}

Up to the previous section, we have already seen that $u(t)$ is uniform bounded in $H^{n/2}(M)$ in any fixed time interval and the flow exists to all time. Furthermore, if either $\int_M Q_0 \dvg < (n-1)!\vol (\mathbb S^n)$ or $M = \mathbb S^n$, the $H^{n/2}$-bound of $u(t)$ does not depend on the length of the time interval; see Section \ref{sec-BoundInH^(n/2)}. This section is devoted to showing a similar result in the supercritical case. 

To this purpose, let us mention the following concentration-compactness result due to A. Fardoun, R. Regbaoui in \cite{fr18}. Although the authors in \cite{fr18} only consider 4-manifolds, their proof can be easily adapted to the general even dimensional case . 

\begin{proposition}[A. Fardoun and R. Regbaoui]\label{concentration-compacness}
Let $h\in C^0(M)$ and $(u_k,h_k)_k$ be a sequence in $H^{n/2}(M)\times L^1(M)$ satisfying the equation
\[
\Po u_k+h_k=f_ke^{nu_k}
\]
with
\[
\int_Mf_ke^{nu_k} \dvg=\int_M h_k \dvg
\]
and
\[
h_k \to h
\]
in $L^1(M)$ as $k\rightarrow+\infty$, where $f_k\in C^0(M)$ and $C^{-1}\leqslant f_k\leqslant C$ for some positive constant $C$. Then either
\begin{enumerate}[label=\rm (\roman*)]
 \item the sequence $(e^{|u_k|})_k$ is bounded in $L^p(M)$ for all $p\in[1,+\infty)$ or

 \item for a subsequence, still denoted by $(u_k)_k$, there exist a finite number of points $x_1, \dots, x_m\in M$ and integers $l_1, \dots, l_m\in\mathbb{N}^*$ such that
\[
\sum_{j=1}^ml_j=\frac{(n-1)!\vol(\mathbb S^n)}{\int_M h \dvg}
\]
and as $k\rightarrow+\infty$
\[
e^{nu_k} \to \frac{(n-1)!\vol(\mathbb S^n)}{\int_M h \dvg}\sum_{j=1}^ml_j\delta_{x_j}
\]
in the sense of measure, where $\delta_{x}$ is the Dirac mass at the point $x\in M$.
\end{enumerate}
\end{proposition}

Using the above result, we are able to control $u(t)$ in $H^{n/2}$-norm, which can be thought of as an improvement of Proposition \ref{Hn/2bd-supercritical}.

\begin{lemma}
Let $(n-1)!\vol(\mathbb S^n)<\int_M Q_0 \dvg\neq k(n-1)!\vol(\mathbb S^n),~~k\in\{2, 3, \dots\}$. Then, there exists a uniform constant $\Cscr_{n/2}>0$ such that
\[
\sup_{t \geqslant 0}\|u(t)\|_{H^{n/2}(M)}\leqslant \Cscr_{n/2}.
\]
\end{lemma}

\begin{proof}
 In view of Corollary \ref{byproduct}, we have
\[
\int_0^{+\infty}\int_Mu_t(t)^2e^{nu(t)} \dvg dt <+\infty.
\]
This implies that there exists a sequence $t_k\in[k,k+1]$ for all $k\in\mathbb{N}^*$ such that
\[
\lim_{k\rightarrow+\infty}\int_M|u_t(t_k)|^2e^{nu(t_k)} \dvg=0.
\]
Since $f>0$ everywhere, it is easy to see that
\[
\int_M e^{nu(t_k)} \dvg \leqslant \frac 1{\inf_M f} \int_M f d\mu_{g(0)}.
\]
Besides, as in the proof of Lemma \ref{lemSizeOfIntegralOfLambda}, there holds
\[
\begin{aligned}
\Big(\lambda(t_k) - \frac{ \int_M Q_0 \dvg }{ \int_M fd\mu_{g(0)}} \Big)^2 \Big(\int_M fd\mu_{g(0)}\Big)^2 &\leqslant \int_M |u_t(t_k)|^2 e^{nu(t_k)} \dvg\int_M e^{nu(t_k)} \dvg.
\end{aligned}
\]
Putting the above three estimates together, we deduce that
\[
\lambda(t_k) \to \frac{ \int_M Q_0 \dvg }{ \int_M fd\mu_{g(0)}}
\]
as $k \to +\infty$. Now, if we set
\[
u_k=u(t_k),~~h_k=u_t(t_k)e^{nu_k}+Q_0,\quad\mbox{and}\quad f_k=\lambda(t_k)f,
\]
then we have
\begin{equation}\label{eqforuk}
\Po u_k+h_k=f_ke^{nu_k} 
\end{equation}
with $f_k\in C^0(M)$ satisfying
\[
\frac 1 2 \frac{ \int_M Q_0 \dvg }{ \int_M fd\mu_{g(0)}} \inf_M f
\leqslant f_k\leqslant 2 \frac{ \int_M Q_0 \dvg }{ \int_M fd\mu_{g(0)}} \sup_Mf
\]
and
\[
\int_M f_ke^{nu_k} \dvg=\int_M h_k \dvg.
\]
In addition, we obtain 
\[
\|h_k-Q_0\|_{L^1(M)} \leqslant \Big(\int_Me^{nu(t_k)} \dvg\Big)^{1/2} \Big(\int_M|u_t(t_k)|^2e^{nu(t_k)} \dvg\Big)^{1/2} \rightarrow0
\]
as $k\rightarrow+\infty$. Therefore, under the condition $\int_MQ_0 \dvg\notin (n-1)!\vol(\mathbb S^n) \mathbb N$ we know that the alternative (ii) in Proposition \ref{concentration-compacness} cannot occur. Hence, for any $p \geqslant 1$ but fixed, we know that
\begin{equation}\label{LpNormofeukbd}
 \int_Me^{p|u_k|} \dvg\leqslant C_p
\end{equation}
for some constant $C_p >0$ depending on $p$. Hence, it is not hard to verify that
\begin{equation}\label{LpNormoffkbd}
 \|f_k\|_{L^p}\leqslant C_p
\end{equation}
for all $p\in[1,2)$. Using \eqref{LpNormofeukbd} and \eqref{LpNormoffkbd} we apply the standard elliptic regularity theory to \eqref{eqforuk} and get that $(u_k)_k$ is bounded in $W^{n,p}(M)$ for all $p\in[1,2)$. Keep in mind that Sobolev's embedding theorem $W^{n,p}(M)\subset H^{n/2}(M)\cap C^\alpha(M)$ holds true for all $\alpha\in[0,1-1/p]$. Hence, we have proved that $(u_k)_k$ is bounded in $H^{n/2}(M)$, that is,
\begin{equation} \label{uniformHn2bdofuk}
 \|u_k\|_{H^{n/2}(M)}\leqslant C,
\end{equation}
where $C$ is a positive constant depending only on $n, \gamma, u_0, Q_0$ and $M$. Now, we define \[
v_k(t):=u(t+t_k).
\]
Clearly, $v_k$ solves
\[
\left\{
\begin{aligned}
 e^{nv_k}\partial_tv_k&= -(\Po v_k+Q_0)+\lambda(t+t_k)fe^{v_k},\\
 v_k(0)&= v_k.
\end{aligned}
\right.
\]
In view of \eqref{uniformHn2bdofuk} we know that $\|v_k(0)\|_{H^{n/2}(M)} < +\infty$. From this and
\[
\inf_{t\in[0,1]}\Escr[v_k(t)]=\inf_{t\in[t_k,t_k+1]}\Escr[u(t)] > - \infty,
\]
we can apply Proposition \ref{Hn/2bd-supercritical} with $T_0=1$ to get that
\[
\sup_{t\in[0,1]}\|v_k(t)\|_{H^{n/2}(M)}\leqslant \exp(C_0e^{C_0}),
\]
which is equivalent to
\[
\sup_{t\in[t_k,t_k+1]}\|u(t)\|_{H^{n/2}(M)}\leqslant\exp(C_0e^{C_0}).
\]
Since $t_k\in[k,k+1]$ for all $k\in\mathbb{N}^*$, we have
\[
\sup_{t\in[0,+\infty)}\|u(t)\|_{H^{n/2}(M)}\leqslant\exp(C_0e^{C_0}).
\]
This conclude the supercritical case.
\end{proof}

Once we have the time-independent $H^{n/2}$-bound for $u(t)$, we can follow the proof of Lemma \ref{boundoflambda} to bound $\lambda (t)$ for all time, leading us to the following result.

\begin{lemma} \label{uniformboundoflambda}
There exists a universal constant $\Cscr_\ell>0$ such that
\[
|\lambda (t)|\leqslant \Cscr_\ell
\]
for all $t \geqslant 0$.
\end{lemma}


\section{Boundedness of $u(t)$ in \texorpdfstring{$C^\infty(M)$}{Lg} for all time}
\label{sec-GlobalBoundedness}

In this section, we shall derive the uniform boundedness of our flow in $C^\infty$-norm. The crucial step is to show the convergence of the quantity below 
$$F_2(t)=:\int_M(Q_{g(t)}-\lambda(t)f)^2d\mu_{g(t)}$$
as time goes to infinity. To do so, we first need the following estimates
\begin{corollary}\label{byproduct}
We have the following claims:
\begin{enumerate}[label=(\rm \roman*)]
 \item There is an universal constant $C>0$ such that
\begin{equation}\label{IntF_2bd}
\int_0^{+\infty} \int_M(\lambda(t)f - Q_{g(t)} )^2 d\mu_{g(t)} dt\leqslant C.
\end{equation}

 \item For any real number $\alpha$, there is a uniform constant $C(\alpha)>0$ such that
\begin{equation}
 \label{Intealphaubd}
 \int_Me^{\alpha u(t)}\dvg \leqslant C(\alpha).
\end{equation}
for all $t>0$.
\end{enumerate}
\end{corollary}

\begin{proof}
For Part (i), from the proofs in the Section \ref{sec-BoundInH^(n/2)} and the global existence result Proposition \ref{corFlowExistsForAllTime}, one can easily deduce that
\[
\inf_{t\geqslant0}\Escr[u(t)]>-\infty.
\]
This together with Lemma \ref{energydecay} implies \eqref{IntF_2bd}. For Part (ii), notice that we have showed, in the Sections \ref{sec-BoundInH^(n/2)} and \ref{sec-UniformBoundInH^2k} , the time-independent bound for our flow in $H^{n/2}$-norm. This together with \eqref{TrudingerInequalityWithHnorm} and Proposition \ref{corFlowExistsForAllTime} gives \eqref{Intealphaubd}. The proof is complete. 
\end{proof}

Recall that we have obtained the time-independent bound for $\lambda(t)$; see Lemmas \ref{boundoflambda} and \ref{uniformboundoflambda}. This uniform bound is crucial to the convergence of $F_2(t)$.

\begin{lemma}\label{EnergyDecayInL^2}
We have
$
F_2(t) \rightarrow 0 
$
as $t \to +\infty$.
\end{lemma}
\begin{proof}
Let $0 < \varepsilon \ll 1$ be arbitrary and choose $t_0 \geqslant 0$ in such a way that $F_2 (t_0) \leqslant \varepsilon$. We shall show that $F_2 (t) \leqslant 3\varepsilon$ for all $t \geqslant t_0$. By way of contradiction, we may assume that there is a finite number $t_1$ defined by
\[
t_1 := \inf\{t \geqslant t_0 : F_2 (t) \geqslant 3\varepsilon\}.
\]
Let us now consider the flow in $[0, t_1]$ and thoughout this proof, by $C$ we mean a positive constant independent of $t_0$ and $t_1$ which may change from line to line. Then by using \eqref{qcurvartureequation}, Lemma \ref{boundoflambda}, the fact that $F_2(t)\leqslant 3\epsilon$ for $t_0\leqslant t\leqslant t_1$ and the estimate
\[
|Q_{g(t)}|^2 \leqslant 2 |Q_{g(t)} - \lambda (t) f|^2 +2 |\lambda (t)|^2 f^2
\]
we deduce that
\begin{equation}\label{F2bound}
\int_M e^{-nu} \big(Q_0 + \Po u \big)^2 \dvg = \int_M |Q_{g(t)}|^2 d\mu_{g(t)} \leqslant C
\end{equation}
for all $t_0 \leqslant t \leqslant t_1$. Notice that by \eqref{Intealphaubd} one has $\int_M e^{3nu} \dvg \leqslant C$ for all $t \geqslant 0$. Then H\"older's inequality implies that
\[
\int_M |Q_0 + \Po u|^{3/2} \dvg \leqslant \Big( \int_M e^{-nu} \big(Q_0 + \Po u \big)^2 \dvg \Big)^{3/4} \Big( \int_M e^{3nu} \dvg \Big)^{1/4}.
\]
From all three estimates above, we deduce that
\begin{equation}\label{P3/2bound}
\int_M | \Po u|^{3/2} \dvg \leqslant C
\end{equation}
for all $t_0 \leqslant t \leqslant t_1$. By using Sobolev's inequality, we obtain $
|u (t)| \leqslant C$ for all $t_0 \leqslant t \leqslant t_1$. It follows from Lemma \ref{flowequationsofuandQ} that $Q_{g(t)} - \lambda (t) f $ satisfies the evolution equation
\[\begin{split}
\dfrac{\partial}{\partial t} \big( Q_{g(t)} - \lambda (t) f ) =& - \P_{g(t)}(Q_{g(t)}-\lambda (t) f)+nQ_{g(t)}(Q_{g(t)}-\lambda (t) f) + \lambda' (t)f.
\end{split}\]
From this we can estimate the time derivative of $F_2 (t)$ as follows
\[\begin{split}
\frac d{dt} F_2 (t) =& 2 \int_M \big(Q_{g(t)}-\lambda (t) f \big) \dfrac{\partial}{\partial t} \big( Q_{g(t)} - \lambda (t) f ) d\mu_{g(t)} + n \int_M \big(\lambda (t) f - Q_{g(t)} \big)^3 d\mu_{g(t)} \\
= & - 2 \int_M \big(Q_{g(t)}-\lambda (t) f \big) \P_{g(t)}(Q_{g(t)}-\lambda (t) f) d\mu_{g(t)} \\
&+ 2n \int_M \big(Q_{g(t)}-\lambda (t) f \big)Q_{g(t)}(Q_{g(t)}-\lambda (t) f) d\mu_{g(t)} \\
& + 2 \lambda' (t) \int_M f \big(Q_{g(t)}-\lambda (t) f \big) d\mu_{g(t)} + n \int_M \big(\lambda (t) f - Q_{g(t)} \big)^3 d\mu_{g(t)} \\
= & - 2 \int_M \big(Q_{g(t)}-\lambda (t) f \big) \P_{g(t)}(Q_{g(t)}-\lambda (t) f) d\mu_{g(t)} \\
&+ 2n \lambda(t) \int_M f \big(Q_{g(t)}-\lambda (t) f \big)^2 d\mu_{g(t)} + n \int_M \big(Q_{g(t)} - \lambda (t) f \big)^3 d\mu_{g(t)} .
\end{split}\]
Here we have used \eqref{derivativeofintegraloff} to drop off the term involving $\lambda' (t)$. Next we apply the Gagliardo--Nirenberg interpolation inequality \eqref{eqGagliardoNirenbergInequality} to get
\[
 \int_M \big| \lambda (t) f - Q_{g(t)} \big|^3 d\mu_{g(t)} \leqslant C \Big( \int_M \big(\lambda (t) f - Q_{g(t)} \big)^2 d\mu_{g(t)} \Big) \|\lambda (t) f - Q_{g(t)} \|_{H^{n/2}(M, g(t))},
\]
where the norm can be taken with respect to $g(t)$ due to the uniform boundedness of $u$ on $[t_0, t_1]$. From this, we easily get
\[\begin{split}
 \int_M \big| \lambda (t) f - Q_{g(t)} \big|^3 d\mu_{g(t)} \leqslant & C \Big( \int_M \big(\lambda (t) f - Q_{g(t)} \big)^2 d\mu_{g(t)} \Big) \\
& \times \Big( \int_M \big(Q_{g(t)} - \lambda (t) f\big) \P_{g(t)} \big(Q_{g(t)} - \lambda (t) f \big)d\mu_{g(t)} \Big)^{1/2} \\
&+C \Big( \int_M \big(\lambda (t) f - Q_{g(t)} \big)^2 d\mu_{g(t)} \Big)^{3/2}
\end{split}\]
for some constant $C$ independent of $t$. By using Young's inequality, we further get
\[\begin{split}
 \int_M \big| \lambda (t) f - Q_{g(t)} \big|^3 d\mu_{g(t)} \leqslant & C \Big( \int_M \big(\lambda (t) f - Q_{g(t)} \big)^2 d\mu_{g(t)} \Big)^2 \\
& + \frac 1n \int_M \big(Q_{g(t)} - \lambda (t) f \big) \P_{g(t)} \big(Q_{g(t)} - \lambda (t) f \big) d\mu_{g(t)}\\
&+C \Big( \int_M \big(\lambda (t) f - Q_{g(t)} \big)^2 d\mu_{g(t)} \Big)^{3/2}
\end{split}\]
for some constant $C$ independent of $t$. Thus, we have just shown that
\[
\frac d{dt} F_2 (t) \leqslant C F_2(t)^2 + C F_2(t)^{3/2} + C F_2(t)
\]
for all $t \in [t_0, t_1]$. Since $0< \varepsilon \ll 1$, the preceding inequality gives $F_2' (t) \leqslant C F_2(t)$ for all $t \in [t_0, t_1]$. Integrating both sides of the preceding inequality over $[t_0, t_1]$ gives
\[
2\varepsilon \leqslant F_2(t_1) - F_2(t_0) \leqslant C \int_{t_0}^{t_1} F_2(t) dt.
\]
Now by Corollary \ref{byproduct}(i), we can select $t_0 \gg 1$ in such a way that $C \int_{t_0}^{+\infty} F_2(t) dt<\varepsilon$. Hence we obtain $2\varepsilon < \varepsilon$, which is impossible. Thus we have just obtained $L^2$-convergence of the flow.
\end{proof}

An immediate consequent of Lemma \ref{EnergyDecayInL^2} is that $F_2 (t)$ is bounded and this is exactly the same as that of \eqref{F2bound}, which is limited to a finite time interval, namely we have
\[
\int_M Q_{g(t)}^2 d\mu_{g(t)} \leqslant C
\]
for all $t \geqslant 0$ and for some constant $C>0$. Now combining Lemma \ref{EnergyDecayInL^2} and Corollary \ref{byproduct}(ii) gives a uniform bound for $\int_M |Q_0 + \Po u|^{3/2} \dvg$. Consequently, we obtain a bound similar to \eqref{P3/2bound}, however for all $t \geqslant 0$, namely we have
\[
\int_M | \Po u|^{3/2} \dvg \leqslant C
\]
for all $t \geqslant 0$. Applying Green’s formula gives
\[
u(x)= \overline u + \int_M \Po u(z) G(x,z) d\mu_{g_0(z)},
\]
where $G(x,z)$ is Green's function associated to the operator $\Po$. Green's function $G$ is in $C^\infty( M \times M \setminus \text{diagonal})$ and in the asymptotic expansion of the kernel of $G$, the leading term coincides with that of the Green’s function for $(-\Delta)^{n/2}$ in $\R^n$. To be more precise, up to a constant multiple, we have the following asymptotic for $G$
\[
G(x,z) \sim \log |x-z|+ R(x,z),
\]
where $R \in C^{\infty}(M \times M)$. From this we easily verify that $u \in W^{n, 3/2}(M)$. By Sobolev's inequality, we obtain the following.

\begin{lemma}\label{lemC^0BoundedInAllTime}
For any solution $u(t)$ to the flow \eqref{eqFlow}, there holds
\begin{equation}\label{eqBoundForUAllTime}
\sup_{t \geqslant 0} |u(t)| < +\infty.
\end{equation}
\end{lemma}

Thus, we have just shown that the solution $u(t)$ is uniformly pointwise bounded along the flow. A consequence of \eqref{eqBoundForUAllTime}, we deduce that
\[
\int_M \big( Q_0 + \Po u \big)^2 \dvg \leqslant C
\]
for all $t \geqslant 0$. This gives us a similar bound for $\int_M (\Po u)^2 \dvg$ as in \eqref{eqP_0InL^2}, however, for all time. Hence, we have just improved Lemma \ref{lemH^nBoundedInFixedTime} as follows.

\begin{lemma}
For any solution $u(t)$ to the flow \eqref{eqFlow}, there holds
\[\sup_{t \geqslant 0} \|u(t)\|_{H^n(M )} \leqslant \Cscr_n,\]
where $\Cscr_n>0$ is a uniform constant.
\end{lemma}

By repeating arguments used in Section \ref{sec-BoundInH^2k}, we conclude the following.

\begin{lemma}\label{lemH^2kBoundedInAllTime}
For any solution $u(t)$ to the flow \eqref{eqFlow}, there exists a constant $\Cscr_{2k}>0$ depending on $k$ such that
\[
\sup_{t \geqslant 0} \|u(t)\|_{H^{2k} (M )} \leqslant \Cscr_{2k} 
\]
for any $k>1$.
\end{lemma}



\section{Convergence of the flow}
\label{sec-Convergence}

This section is devoted to various convergences of the flow.

\subsection{Sequential convergence of the flow}

A direct consequence of the $H^{2k}$-boundedness of $u(t)$ obtained in Lemma \ref{lemH^2kBoundedInAllTime} above is the following sequential convergence of the flow \eqref{eqFlow} with an appropriate initial data $u_0$. Of course, we always assume that the hypotheses of Theorems \ref{Noncriticalcase} and \ref{Criticalcase} are fulfilled.

\begin{corollary}\label{sequentialconvergence}
There exist a function $u_\infty\in C^\infty(M )$, a real number $\lambda_\infty$, and a time sequence $(t_j)_j$ with $t_j\rightarrow+\infty$ as $j\rightarrow+\infty$ such that
 \[
\Po u_\infty+Q_0=\lambda_\infty fe^{nu_\infty}
\]
and that the following claims hold:
\begin{enumerate}[label=\rm (\roman*)]
 \item $ \| u(t_j)-u_\infty \| _{C^\infty(M )}\rightarrow0$,
 \item $|\lambda(t_j)-\lambda_\infty|\rightarrow0$, and 
 \item $ \| Q_{g(t_j)}-\lambda_\infty f \| _{C^\infty(M )}\rightarrow0$
\end{enumerate}
as $j\rightarrow+\infty$. Furthermore, if either $0 \ne \int_MQ_0\dvg <(n-1)!\vol(\mathbb S^n)$ or $M=\mathbb S^n$, then there holds $\lambda_\infty=1$. 
\end{corollary}

\begin{proof}
To start our proof, we recall that $u(t)$ is bounded in $H^{2k}(M)$ for arbitrary but fixed $k$. Therefore, by Sobolev's embedding theory, we deduce that there is some smooth function $u_\infty$ and a time sequence $(t_j)_j$ with $t_j\rightarrow+\infty$ as $j\rightarrow+\infty$ such that
\[
 \| u(t_j)-u_\infty \| _{C^\infty(M )}\rightarrow0
\]
as $j\rightarrow+\infty$. This establishes Part (i). The strong convergence $u(t_j) \to u_\infty$ as $j \to +\infty$ also implies that
\[
 e^{n u(t_j)} \to e^{nu_\infty} 
\]
as $j\rightarrow+\infty$. Put
\[
g_\infty = e^{2 u_\infty} g_0, 
\quad Q_\infty= e^{-n u_\infty} \big( \Po u_\infty + Q_0 \big), 
\quad \lambda_\infty = \frac{\int_M f Q_\infty d\mu_{g_\infty}}{\int_M f^2 d\mu_{g_\infty}}.
\]
Part (ii) follows easily because of the strong convergence $u(t_j) \to u_\infty$ as $j \to +\infty$. Now we establish Part (iii). To this purpose, we first establish the following weaker result
\[
\|Q_{g(t_j)}- \lambda_\infty f \|_{L^2(M )} \to 0
\]
as $j \to +\infty$. To see this, we observe from \eqref{eqBoundForUAllTime}, Lemma \ref{EnergyDecayInL^2}, and Lemma \ref{lemC^0BoundedInAllTime} that $\| Q_{g(t_j)}-\lambda(t_j)f \| _{L^2(M)} \to 0$ as $j \to +\infty$ because by writing
\[
\big( Q_{g(t_j)}-\lambda(t_j)f \big)^2=\big( Q_{g(t_j)}-\lambda(t_j)f \big) e^{-nu(t_j)/2}\big( Q_{g(t_j)}-\lambda(t_j)f \big) e^{nu(t_j)/2}
\] 
we can estimate
\[
\begin{split}
\| Q_{g(t_j)}-\lambda(t_j)f \|_{L^2(M)}^2 \leqslant &\| Q_{g(t_j)}-\lambda(t_j)f \|_{L^2(M, g(t_j))}  \\
& \times \Big( \int_M \big( Q_{g(t_j)}-\lambda(t_j)f \big)^2 e^{-nu(t_j)} \dvg\Big)^{1/2} \\
\leqslant & C \| Q_{g(t_j)}-\lambda(t_j)f \|_{L^2(M, g(t_j))} \| Q_{g(t_j)}-\lambda(t_j)f \|_{L^2(M)},
\end{split}
\]
which gives
\[
\| Q_{g(t_j)}-\lambda(t_j)f \|_{L^2(M)} \leqslant C \| Q_{g(t_j)}-\lambda(t_j)f \|_{L^2(M, g(t_j))} \to 0 
\]
as $j \to +\infty$. From this we apply triangle inequality and assertion (i) to get
\[
\| Q_{g(t_j)}- \lambda_\infty f \| _{L^2(M )}\leqslant \| Q_{g(t_j)}-\lambda(t_j)f \| _{L^2(M)}+ |\lambda(t_j)- \lambda_\infty| \| f \|_{L^2(M)} \to 0
\]
as $j \to +\infty$. Now we apply the Gagliardo--Nirenberg inequality in the following way
\begin{align*}
\begin{aligned}
\|Q_{g(t_j)}- \lambda_\infty f \|_{H^k(M )} & \leqslant C \|Q_{g(t_j)}- \lambda_\infty f \|_{H^{2k}(M )}^{1/2} \|Q_{g(t_j)}- \lambda_\infty f \|_{L^2(M )}^{1/2}
\end{aligned}
\end{align*}
for each $k$ fixed. If we rewrite
\[
Q_{g(t_j)}- \lambda_\infty f = e^{-n u(t_j)} (\Po u + Q_0) - \lambda_\infty f ,
\]
then for each $k$ fixed it follows from the boundedness of $u(t)$ in $H^{n+2k} (M)$ that $Q_{g(t_j)}- \lambda_\infty f$ is uniformly bounded in $H^{2k} (M)$. Hence applying Lemma \ref{EnergyDecayInL^2} and Sobolev's embedding theory gives us the conclusion. Finally, to realize the equation satisfied by $u_\infty$, we note that
\[\begin{split}
Q_{g(t_j)} - Q_\infty = &( \Po u(t_j) - \Po u_\infty) e^{-n u(t_j)} \\
& + \Po u_\infty \big( e^{-n u(t_j)} - e^{-n u_\infty} \big) + \big( e^{-n u(t_j)} - e^{-n u_\infty} \big)Q_0,
\end{split}\]
which implies that 
\[
\| Q_{g(t_j)} - Q_\infty \|_{H^k(M)} \to 0
\]
as $j \to +\infty$. Hence
\[
Q_\infty = \lambda_\infty f
\]
everywhere on $M$. This implies that
$$\Po u_\infty+Q_0=\lambda_\infty fe^{nu_\infty}.$$
Finally, we evaluate $\lambda_\infty$. Since $u_0\in Y$ because either $\int_MQ_0\dvg <(n-1)!\vol(\mathbb S^n)$ or $M=\mathbb S^n$, we have
$$\int_Mfe^{u_\infty}\dvg =\int_MQ_0\dvg .$$
Hence, from this it is easy to conclude that $\lambda_\infty=1$ provided $\int_M Q_0 \dvg \ne 0$.
\end{proof}

\subsection{Uniform convergence of the flow}

The main result of this subsection is the following.

\begin{lemma}\label{corExistenceOfSolutionU_infty}
If $\int_M Q_0 \dvg \ne 0$ and the prescribing $Q$-curvature equation
\[
\Po u + Q_0 = fe^{nu}
\]
has a unique solution, the convergence in Corollary \ref{sequentialconvergence} is uniform in time, namely we have
\begin{enumerate}[label=\rm (\roman*)]
 \item $ \| u(t)-u_\infty \| _{C^\infty(M )}\rightarrow0$,
 \item $|\lambda(t)-\lambda_\infty|\rightarrow0$, and
 \item $ \| Q_{g(t)}-\lambda_\infty f \| _{C^\infty(M )}\rightarrow0$
\end{enumerate}
as $t\rightarrow+\infty$. 
\end{lemma}

\begin{proof}
Let $u_\infty$ be found in Corollary \ref{sequentialconvergence}. By way of contradiction, let us assume that there exist some $\varepsilon >0$ and a time sequence $(t_j)_j\subset[0,+\infty)$ with $t_j\rightarrow+\infty$ as $j\rightarrow+\infty$ such that
\[
\| u(t_j)-u_\infty \| _{C^\infty(M )} \geqslant\epsilon
\]
for $j$ large. Now, we consider the associated sequence of functions $u(t_j)$ and sequence of numbers $\lambda(t_j)$. By following the proof of Corollary \ref{sequentialconvergence} and up to a subsequence, still denoted by $(t_j)_j$ we obtain 
\begin{itemize}
 \item [(i)] $ \| u(t_j)- \widetilde u_\infty \| _{C^\infty(M )}\rightarrow0$,
 \item [(ii)] $|\lambda(t_j)- \widetilde \lambda_\infty|\rightarrow0$, and 
 \item [(iii)] $ \| Q_{g(t_j)}- \widetilde \lambda_\infty f \| _{C^\infty(M )}\rightarrow0$
\end{itemize}
as $j\rightarrow+\infty$. Furthermore, the function $\widetilde u_\infty$ solves
\[
\Po \widetilde u_\infty+Q_0 = \widetilde \lambda_\infty fe^{n \widetilde u_\infty}.
\]
It is important to note that in the case $\int_M Q_0 \dvg \ne 0$ our choice of $u_0$ leads us to the positivity of both $\lambda_\infty$ and $\widetilde \lambda_\infty$. Consequently, the two functions $u_\infty + (1/n) \log \lambda_\infty$ and $\widetilde u_\infty + (1/n) \log \widetilde \lambda_\infty$ solve the prescribing $Q$-curvature equation. By the uniqueness property, we deduce that
\[\begin{split}
\int_M f e^{nu_0} \dvg & = \int_M f e^{n u_\infty} \dvg \\
&= e^{\log \widetilde \lambda_\infty - \log \lambda_\infty} \int_M f e^{n \widetilde u_\infty} \dvg = \frac{\widetilde \lambda_\infty}{ \lambda_\infty} \int_M f e^{nu_0} \dvg .
\end{split}\]
Hence $ \widetilde \lambda_\infty = \lambda_\infty$, giving us $\widetilde u_\infty= u_\infty$. This is a contradiction. The proof is complete.
\end{proof}


\subsection{Exponential convergence of the flow}

In the final part of this section, we want to study the uniform convergence of the flow \eqref{metricflow} and its rate under the assumptions $\int_MQ_0 \dvg<0$ and $f\leqslant0$. However, we try to prove a more general result which provides a sufficient condition on the exponentially fast convergence. We should point out that the result below is inspired by Struwe \cite{Struwe}.

\begin{theorem}
Suppose that $u_\infty$ is a strictly relative minimizer in the sense that for some constant $c_0>0$ there holds
\begin{equation}\label{eqStrictRelativeMinimizer}
\int_M \big( h \cdot \Po h - n\lambda_\infty f e^{n u_\infty} h^2 \big) \dvg \geqslant 2c_0 \|h\|_{H^{n/2}(M)}^2
\end{equation}
for all $h \in T_{u_\infty}$, where we let
\[
T_{u} = \Big\{ h \in H^{n/2}(M) : \int_M fe^{nu} h \dvg = 0 \Big\}.
\]
Then $u(t) \to u_\infty$ and $\lambda(t) \to \lambda_\infty$ exponentially fast as $t\to +\infty$ in the following sense: with a constant $C$ depending only on the initial data $u_0$, there holds
\begin{equation*}
\|Q_{g(t)} - \lambda (t) f\|_{L^2(M,g(t))} \leqslant C e^{-2c_0t}
\end{equation*}
for all time and hence
\begin{equation*}
|\lambda (t) - \lambda_\infty| + \|u(t) - u_\infty \|_{C^\infty(M)} \leqslant Ce^{- c_0 t/2}
\end{equation*}
for all $t$. 
\end{theorem}

\begin{proof}
Set 
\[
w(t)=|\lambda (t) - \lambda_\infty| + \|u(t) - u_\infty \|_{H^n(M )}.
\]
Let $0<\epsilon\ll1$ be an arbitrary positive number. From Corollary \ref{sequentialconvergence}, it follows that there exists some $t_0>0$ such that $w(t_0) \leqslant \epsilon$. Then we claim that $w(t)\leqslant 3\epsilon$ for all $t\geqslant t_0$.
We may assume by contradiction that there exists a finite number $t_1$ such that
$$t_1:=\inf\{t\geqslant t_0:w(t)\geqslant3\epsilon\}.$$
This implies that 
\begin{equation}\label{wtsmall}
w(t)\leqslant3\epsilon \quad \mbox{for}~~t_0\leqslant t\leqslant t_1.
\end{equation}
Now, we recall that
\[\begin{split}
\frac 12 \frac d{dt} F_2 (t) = & - \int_M \big(Q_{g(t)}-\lambda (t) f \big) \P_{g(t)}(Q_{g(t)}-\lambda (t) f) d\mu_{g(t)} \\
&+ n \lambda(t) \int_M f \big(Q_{g(t)}-\lambda (t) f \big)^2 d\mu_{g(t)} + \frac n2 \int_M \big(Q_{g(t)} - \lambda (t) f \big)^3 d\mu_{g(t)}.
\end{split}\]
By H\"older's inequality, the embedding $H^{n/2}(M ) \hookrightarrow L^4(M )$, and the equivalence between $g(t)$ and $g_0$, we have
\[\begin{split}
\int_M \big| Q_{g(t)} - & \lambda (t) f \big|^3 d\mu_{g(t)}\\
 \leqslant & \overbrace { \Big(\int_M \big| Q_{g(t)} - \lambda (t) f \big|^2 d\mu_{g(t)} \Big)^{1/2} }^{=o(1)} \Big( \int_M \big| Q_{g(t)} - \lambda (t) f \big|^4 d\mu_{g(t)} \Big)^{1/2} \\
\leqslant & o(1) \int_M \big(Q_{g(t)}-\lambda (t) f \big) \P_{g(t)} \big (Q_{g(t)}-\lambda (t) f \big ) d\mu_{g(t)} \\
& + o(1) \int_M \big(Q_{g(t)}-\lambda (t) f \big)^2 d\mu_{g(t)}
\end{split}\]
with error $o(1) \to 0$ as $t \to +\infty$. Therefore, the time derivative of $F_2(t)$ can be estimated further as follows
\[\begin{split}
\frac 12 \frac d{dt} F_2 (t) \leqslant &- (1+o(1)) \int_M \big(Q_{g(t)}-\lambda (t) f \big) \P_{g(t)}(Q_{g(t)}-\lambda (t) f) d\mu_{g(t)} \\
&+ n \lambda(t) \int_M f \big(Q_{g(t)}-\lambda (t) f \big)^2 d\mu_{g(t)} + o(1)F_2 (t)\\
=&-(1+o(1))d^2\Lscr_u(u_t,u_t)+o(1)F_2 (t),
\end{split}\]
where 
\[
d^2\Lscr_u(u_t,u_t)=\int_M \big( u_t\Po u_t-n\lambda(t) fu_t^2e^{nu} \big)\dvg.
\]
Notice that we can find $\delta(t)\in\mathbb{R}$ such that $u_t+\delta(t)f\in T_{u_\infty}$. In fact, one only needs to solve the following equation with $g_\infty=e^{2u_\infty}g_0$
$$0=\int_M(u_t+\delta(t)f)fd\mu_{g_\infty}=\int_Mu_tf(e^{n(u_\infty-u)}-1)d\mu_{g(t)}+\delta(t)\int_Mf^2d\mu_{g_\infty}.$$
Here we have used the fact that $u_t\in T_{u(t)}$ due to \eqref{derivativeofintegraloff}. It follows from \eqref{wtsmall} and \eqref{lowerboundofintegraloff2} that
$$|\delta|\leqslant C \| u(t)-u_\infty \| _{L^\infty (M)}\sqrt{F_2 (t)}\leqslant C\epsilon \sqrt{F_2 (t)},$$
for $t_0\leqslant t\leqslant t_1$. In consequence, we set $h_0=u_t+\delta(t)f$ for brevity. Then we have
\begin{align*}
 d^2\Lscr_u(u_t,u_t)&=\int_Mu_t\Po u_t-n\lambda(t) fu_t^2e^{nu} \dvg\\
 &=d^2\Lscr_{u_\infty}(h_0,h_0)+I+II \\
 &\geqslant 2c_0 \| h_0 \| ^2_{H^{n/2}(M )}+I+II
\end{align*}
with error terms
\begin{align*}
 I&=\int_M(u_t\Po u_t-h_0\Po h_0) \dvg \\
 &=-\delta\int_M\Po f(2u_t+\delta f) \dvg\\
 &=-2\delta\int_Mu_t\Po f \dvg-\delta^2\int_Mf\Po f \dvg=O(\epsilon F_2 (t))
\end{align*}
and
\begin{align*}
 II&=n\int_M(\lambda_\infty fh_0^2e^{nu_\infty}-\lambda(t)fu_t^2e^{nu}) \dvg\\
 &=n\lambda_\infty\int_Mf(h_0^2e^{nu_\infty}-u_t^2e^{nu}) \dvg-n(\lambda(t)-\lambda_\infty)\int_Mfu_t^2e^{nu} \dvg\\
 &=n\lambda_\infty\int_Mf((h_0^2-u^2_t)+u_t^2(1-e^{n(u-u_\infty)})d\mu_{g_\infty}+O(\epsilon F_2 (t))\\
 &= O(\epsilon F_2 (t))
\end{align*}
for $t_0\leqslant t\leqslant t_1$. Moreover, similar computations and Lemma \ref{lemH^2kBoundedInAllTime} yield
\[
\| h_0 \| ^2_{L^2(M )}= \| u_t \| ^2_{L^2(M )}+O(\epsilon F_2 (t))
\]
for $t_0\leqslant t\leqslant t_1$. Therefore, for sufficiently small $\epsilon>0$ and $t_0\leqslant t\leqslant t_1$ we have
\begin{equation}\label{eqExponentialDecay}
\frac 12 \frac d{dt} F_2 (t) \leqslant -c_0 F_2 (t)
\end{equation}
for some $c_1>0$. Hence we find that
\begin{equation}\label{eqExponentialDecayResult}
F_2(t) \leqslant F_2(t_0) \exp(-2c_0(t-t_0))
\end{equation}
for $t_0\leqslant t\leqslant t_1$. Now, from Lemma \ref{flowequationsofuandQ} we have
\[
\lambda' (t)=\Big (\int_Mf^2d\mu_{g(t)} \Big)^{-1} \Big[ \int_M ( \P_{g(t)}f ) u_td\mu_{g(t)} +n \int_M \lambda (t) f^2 \big(Q_{g(t)} - \lambda (t)f\big)d\mu_{g(t)}\Big].
\]
The term $\int_M ( \P_{g(t)}f ) u_td\mu_{g(t)}$ can be estimated as follows
\[
\Big| \int_M ( \P_{g(t)}f ) u_td\mu_{g(t)} \Big| \leqslant C \|f\|_{C^n} \|u_t\|_{L^2(M,g(t))}.
\]
For the term $ \int_M \lambda (t) f^2 \big(Q_{g(t)} - \lambda (t)f\big)d\mu_{g(t)}$, we can use H\"older's inequality and Lemma \ref{boundoflambda} to get
\[
\Big| \int_M \lambda (t) f^2 \big(Q_{g(t)} - \lambda (t)f\big)d\mu_{g(t)} \Big| \leqslant C\| \lambda (t)f - Q_{g(t)}\|_{L^2(M,g(t))}.
\]
Hence, from \eqref{eqExponentialDecay}, we have
\[
| \lambda' (t) | \leqslant C \sqrt{ F_2(t_0) } \exp(-c_0 (t-t_0)),
\]
and for $t_0 < t \leqslant t_1$ we obtaim
\begin{equation}\label{eqEstimateLambda-Lambda}
|\lambda(t) - \lambda(t_0)| \leqslant \int_{t_0}^{t} |\lambda' (s) |ds \leqslant C \sqrt{ F_2(t_0) }.
\end{equation}
 To estimate $\|u(t)-u_\infty\|_{L^2(M )}$, we use Lemma \ref{lemH^2kBoundedInAllTime} to get 
\[
\|u(t) - u(t_0)\|_{L^2(M )} \leqslant C\int_{t_0}^{t} \|u_t\|_{L^2(M,g(t))} ds \leqslant C \sqrt{ F_2(t_0) }.
\]
This together with the Gagliardo--Nirenberg inequality and Lemma \ref{lemH^2kBoundedInAllTime} implies that
\begin{equation}\label{eqEstimateU-UinL^2}
\begin{split}
 \| u(t)-u(t_0) \| _{H^n(M )} & \leqslant C \| u(t)-u(t_0) \| _{H^{2n}(M )}^{1/2} \| u(t)-u(t_0) \| _{L^2(M )}^{1/2}\\
 & \leqslant CF_2(t_0)^{1/4}.
 \end{split}
\end{equation}
Combining \eqref{eqEstimateLambda-Lambda} and \eqref{eqEstimateU-UinL^2} gives
\begin{equation*}
 |\lambda(t)-\lambda(t_0)|+ \| u(t)-u(t_0) \| _{H^n(M )}\leqslant C F_2(t_0)^{1/4}
\end{equation*}
for $t_0\leqslant t\leqslant t_1$. From this and the fact that $w(t_0)\leqslant\epsilon$, it follows that
\[
w(t)\leqslant C F_2(t_0)^{1/4}+\epsilon
\] 
for $t_0\leqslant t\leqslant t_1$. Now, choose $t_0$ sufficiently large such that $C F_2(t_0)^{1/4}\leqslant\epsilon$. Then, by the definition of $t_1$, we obtain that $3\epsilon\leqslant w(t_1)\leqslant2\epsilon,$
which is a contradiction. Hence, we have $w(t)\leqslant 3\epsilon$ for all $t>t_0$. However, this will imply that \eqref{eqExponentialDecayResult} holds true for all $t>t_0$. Then, one has
$$|\lambda(t)-\lambda(t_0)|+ \| u(t)-u(t_0) \| _{L^2(M )}\leqslant C\sqrt{F(t_0)}$$
for all $t>t_0$. Now, by setting $t=t_j$ in the above equality and then sending $t_j$ to $+\infty$, we see that
\[
|\lambda_\infty-\lambda(t_0)|+ \| u_\infty-u(t_0) \| _{L^2(M )}\leqslant C\sqrt{F(t_0)}
\]
for all sufficiently large $t_0>0$. By the Gagliardo--Nirenberg inequality and Lemma \ref{lemH^2kBoundedInAllTime}, we obtain
\[
\| u_\infty-u(t_0) \| _{H^k(M )}\leqslant C \| u_\infty-u(t_0) \| _{H^{2k}(M )}^{1/2} \| u_\infty-u(t_0) \| _{L^2(M )}^{1/2}\leqslant CF_2(t_0)^{1/4}
\]
for all $k\geqslant1$. Hence, we have
\[
|\lambda_\infty-\lambda(t_0)|+ \| u_\infty-u(t_0) \| _{C^\infty(M )}\leqslant CF_2(t_0)^{1/4}.
\]
Renaming $t_0$ as $t$ and choosing $t_0>0$ such that \eqref{eqExponentialDecayResult} holds for all $t>t_0$, we then complete the proof.
\end{proof}

In general, the condition \eqref{eqStrictRelativeMinimizer} is not easy to test; however, by a fairy simple argument we know that the inequality \eqref{eqStrictRelativeMinimizer} holds true if $f$ is non-vanishing with $f\leqslant 0$ everywhere; see \cite[Lemma 2.2]{Ga17} for a similar argument. Hence, we have the following corollary

\begin{corollary}
 If $\int_MQ_0 \dvg<0$ and $f\leqslant0$ everywhere, then we have
\begin{enumerate}[label=\rm (\roman*)]
 \item $ \| u(t)-u_\infty \| _{C^\infty(M )}\rightarrow0$, 
 \item $|\lambda(t)-\lambda_\infty |\rightarrow0$, and
 \item $ \| Q_{g(t)} - \lambda_\infty f \| _{C^\infty(M )}\rightarrow0$ 
\end{enumerate} 
exponentially fast as $t\rightarrow+\infty$.
\end{corollary}


\section{Proof of main results}
\label{sec-ProofsOfMain}

\subsection{Proof of Theorems \ref{Noncriticalcase}, \ref{Criticalcase} and \ref{Uniformconvergence}}
Theorems \ref{Noncriticalcase} and \ref{Criticalcase} follows from Corollary \ref{sequentialconvergence} while Theorem \ref{Uniformconvergence} follows from Lemma \ref{corExistenceOfSolutionU_infty}. We thus complete the proof of the three theorems. 

\subsection{Proof of Corollary \ref{maincorollary}}
For Part (i), following the statement of Theorem \ref{Noncriticalcase}, the initial data $u_0$ for the flow can be arbitrary. Therefore, one may pick some $u_0$ and fix it. This answers why in this case, the term involving $u_0$ on the right hand side of the following inequality
\[
\sup_M f^+ \leqslant C_0 \exp \big( -\tau \|u_0\|_{H^{n/2}(M)}^2 \big)
\] 
can be absorbed into the constant $C_0$, leading to a simpler inequality for the existence. 

Clearly, Part (iii) is straightforward. 

For Part (v), it follows from Corollary \ref{sequentialconvergence} that the limiting function $u_\infty$ solves
\[
\Po u_\infty+Q_0=\lambda_\infty fe^{nu_\infty}.
\]
Recall that our assumption on the initial data $u_0$ gives $\int_M f e^{n u_0} \dvg > 0$. Therefore, integrating both sides of the equation yields 
\[
\int_M Q_0 \dvg = \lambda_\infty \int_M f e^{n u_\infty} \dvg = \lambda_\infty \int_M f e^{n u_0} \dvg.
\]
Hence $\lambda_\infty > 0$. From this we know that the function
\[
u_\infty + (1/n) \log \lambda_\infty
\]
solves \eqref{prescribeqcurvatureequation}. 

For Part (ii), by choosing a new background metric if necessary, we may assume that $Q_0\equiv0$. Then $u_\infty$ satisfies
$$\Po u_\infty=\lambda_\infty fe^{nu_\infty}.$$
First we rule out the possibility of $\lambda_\infty=0$. Indeed, if this is not the case, then we get from the preceding equation that $\Po u_\infty=0$. From this we conclude that $u_\infty\equiv C$. Thus
\[
0\neq e^{nC} \int_Mf\dvg =\int_Mfe^{nu_\infty}\dvg =\int_MQ_0\dvg =0,
\]
which is a contradiction. Hence, there holds $\lambda_\infty\neq0$. Now, if $\lambda_\infty>0$, then the new function
\[
v_\infty=u_\infty+ \frac 1n\log\lambda_\infty
\]
satisfies $\Po v_\infty=fe^{nv_\infty}$; while if $\lambda_\infty<0$, then the function
\[
v_\infty=u_\infty+\frac 1n\log(-\lambda_\infty)
\] 
satisfies $\Po v_\infty=-fe^{nv_\infty}$. Thus, for either case, $e^{2v_\infty}g_0$ is the metric we need. We now consider the case $\int_M f \dvg < 0$. In this scenario, we shall show that $\lambda_\infty>0$. Indeed, the idea is to choose initial data $u_0$ carefully. Mimicking the idea of \cite{gx2008}, we first let $g_0$ be such that $Q_0 \equiv 0$. Then as shown in Appendix \ref{apd-GeXu} the minimizing problem
\[
\inf_{w \in \widetilde{\mathscr H}} \Escr[u]
\]
over the set $\widetilde{\mathscr H}$, defined in Step 2 of Appendix \ref{apd-GeXu}, admits a solution $w \in \widetilde{\mathscr H}$. Still by Appendix \ref{apd-GeXu}, it is important to note that $w$ is also a minimizer of 
\[
\inf_{w \in \mathscr H} \Escr[u]
\] 
over the set $\mathscr H$ defined in Step 1 of Appendix \ref{apd-GeXu}. Therefore, we get $\int_M f e^{nw} \dvg = 0$ and $\int_M e^{nw} \dvg = \vol(M)$. Let us consider our flow \eqref{eqFlow} with the initial data $w$. Then the flow converges to some non-constant function $u_\infty$ which solves
\[
\Po u_\infty =\lambda_\infty fe^{nu_\infty}.
\]
for some $\lambda_\infty \ne 0$. Suppose that $\lambda_\infty < 0$, then it follows that
\[
\int_M f u_\infty e^{nu_\infty} \dvg= \frac 1{\lambda_\infty} \int_M u_\infty \cdot \Po u_\infty \dvg < 0.
\]
Hence, following the idea in \cite{gx2008}, because $\int_M f e^{n u_\infty} \dvg \geqslant 0$ and $\int_M e^{nu_\infty} \dvg= \vol (M)$ there is some $t \in (0,1)$ closed to $1$ such that
\[
\int_M f e^{n t u_\infty} \dvg > 0
\]
and that
\[
\frac {\vol(M)}2\leqslant \int_M e^{n tu_\infty} \dvg \leqslant \frac {3\vol(M)}2.
\]
Putting these facts together, we deduce that $t u_\infty \in \widetilde{\mathscr H}$. However, it is not hard to see that
\[
0 < \Escr[tu_\infty] = t^2 \Escr[u_\infty] < \Escr[u_\infty] \leqslant \Escr[w],
\]
which contradicts the fact that $w$ is an minimizer of $\Escr$ over $\widetilde{\mathscr H}$. Thus $\lambda_\infty > 0$.

Finally, we consider Part (iv). The idea is to show that there exists suitable initial data $u_0$ such that we can apply Theorem \ref{Criticalcase}. If the set $\Sigma=\emptyset$, then the conclusion follows immediately from Corollary \ref{sequentialconvergence}. Otherwise, the set $\Sigma\neq\emptyset$ and $f$ satisfies the inequality \eqref{SupfonSigma}. We consider the following two cases: 

\noindent\textbf{Case (a1)}. Suppose $\int_{\mathbb S^n}f\circ\phi_{y_0,r_0}\dvSn \leqslant0$. In this case, the inequality \eqref{SupfonSigma} tells us that 
\[
\sup_{x \in \Sigma} f(x) \leqslant 0,
\]
so the condition (b) in Theorem \ref{Criticalcase} obviously holds for any G-invariant initial data $u_0$. Hence the conclusion follows.

\noindent\textbf{Case (a2)}. Suppose $\int_{\mathbb S^n}f\circ\phi_{y_0,r_0}\dvSn >0$. In this scenario, we set 
\[
u_0=\frac{1}{n}\log(\det{d\phi_{y_0,r_0}^{-1}})+C,
\] 
where the constant $C$ is chosen in such a way that
\[
e^{nC}\int_{\mathbb S^n}f\circ\phi_{y_0,r_0}\dvSn =(n-1)!\vol(\mathbb S^n).
\]
With this choice of $C$, we conclude that $u_0\in Y$. Since $y_0\in\Sigma$, it is not hard to see that $u_0$ is $G$-invariant. Moreover, the well-known fact
\[
\Escr \Big[ \frac 1 n \log \big(\det{d\phi_{y_0,r_0}^{-1}} \big) \Big]=0
\]
implies that
\[
\Escr[u_0]=\Escr \Big[ \frac 1 n \log \big(\det{d\phi_{y_0,r_0}^{-1}} \big) \Big] + n!\vol(\mathbb S^n)C = n!\vol(\mathbb S^n)C.
\] 
From this, it follows the inequality \eqref{SupfonSigma} that
\[\begin{split}
\sup_{x\in\Sigma}f(x)\leqslant & \frac 1{\vol(\mathbb S^n)}\int_{\mathbb S^n}f\circ\phi_{y_0,r_0}\dvSn\\
= & (n-1)!\exp(-nC)\\
=&(n-1)!\exp\Big(-\frac{\Escr[u_0]}{(n-1)!\vol(\mathbb S^n)}\Big).
\end{split}\]
Hence the condition (b) in Theorem \ref{Criticalcase} holds and the conclusion follows from Corollary \ref{sequentialconvergence}. \qed 


\begin{remark}
Before closing this work, we would like to summarize our major existence results for the prescribing $Q$-curvature problem as follows:

\begin{table}[h]
\begin{tabularx}{\textwidth}{ c|>{\centering}X |>{\centering}X|>{\centering}X|>{\centering\arraybackslash}X}
\hline 
$\int_M Q_0 $ & $ <0$ & $ =0$ & $\in (0, (n-1)!\vol(\mathbb S^n))$ & $ > (n-1)!\vol(\mathbb S^n)$ \\
\hline
n. cond. & $\inf f< 0$ & $f$ changes sign & $\sup f>0 $ & $\sup f > 0$ \\ \hline
s. cond. & $\sup f < C(f^-) $ & $\int_M f \dvg < 0$ & $\sup f >0 $ & $f > 0$\\ \hline
\end{tabularx}
\end {table}
In the above table, `n. cond.' and `s. cond.' stand for necessary and sufficient conditions respectively. In the null case, as far as we know, there is no existence/non-existence result for the case $\int_M f \dvg \geqslant 0$. As already commented in \cite{bfr} and \cite{gx2008}, there is an example of 4-manifolds with non-negative total $Q$-curvature. In the supercritical case, the positivity of $f$ seems to be technical and we do not know whether or not if this is necessary.

\end{remark}


\section*{Acknowledgments}

Q.A. Ng\^o would like to thank Professor Michael Struwe for sharing his preprint \cite{Struwe} at an early stage and for many fruitful discussions regarding curvature flows. Thanks also go to Professor Andreas Juhl for useful discussion on the GJMS operators. Special thanks also go to the Vietnam Institute for Advanced Study in Mathematics (VIASM) for hosting and support where this work was initiated. The research of Q.A. Ng\^o is supported in part by NAFOSTED under grant No. 101.02-2016.02. He also acknowledges the support of Tosio Kato Fellowship from 2019 to 2020. H. Zhang would like to acknowledge the support from the NUS Research Grant R-146-000-169-112 between Oct 2015 and Jan 2016 when part of the work was done. He would also like to thank the hospitality of Professor Xi'nan Ma during his visit to University of Science and Technology of China where part of the work was done. Both authors would like to thank Professor Xingwang Xu for his constant encouragement.


\appendix
\section{Proof of Lemmas \ref{nonconcentration} and \ref{concentrationcompactness}}
\label{apd-nonconcentration}

\subsection{Proof of Lemma \ref{nonconcentration}}

We prove that for a measurable subset $K$ of $M$ with $\vol (K)>0$, there are two constants $\alpha>1$ and $C_K>1$ such that
\begin{equation}\label{apd-eq0}
\int_Me^{nu (t)}\dvg \leqslant C_K \exp\big( \alpha\| u_0\| ^2_{H^{n/2}(M)} \big) \max\Big\{\Big(\int_Ke^{nu(t)}\dvg \Big)^\alpha,1\Big\}.
\end{equation}
Here $\alpha$ depends on $(M, g_0)$ and $C_K$ depends on $(M, g_0)$ and $\vol (K)$. Fix $t>0$ and denote $u=u(t)$ if no confusion occurs. 

\noindent\textbf{Step 1}. Borrowing the idea in \cite{bfr}, in this step we prove that
\begin{equation}\label{apd-eq1}
\int_M u \dvg \leqslant \big| \Escr [u_0]\big| + \frac C{\vol (K)} + \frac {4\vol(M)}{\vol (K)} \max \Big\{ \int_K u \dvg , 0 \Big\}
\end{equation}
for some (uniform) constant $C>0$. Suppose that $\int_M u \dvg>0$, otherwise \eqref{apd-eq1} is trivial. By the definition of $\Escr$ and Lemma \ref{energydecay}, there holds
\[
\int_M \Po u \cdot u \dvg \leqslant \frac 2n\Escr [u_0 ] - 2 \int_M Q_0 u \dvg.
\]
One way to bound $\int_M u \dvg$ from above is to bound $\int_M u^2 \dvg$ from above. Following this strategy, by Poincar\'e's inequality \eqref{eqPositivityOfP_0}, we first obtain
\begin{equation}\label{apd-eq2}
\int_M u^2 \dvg \leqslant \frac 2{n\lambda_1} \Escr [u_0 ] - \frac 2{\lambda_1} \int_M Q_0 u \dvg + \frac 1{\vol(M)} \Big( \int_M u \dvg \Big)^2.
\end{equation}
Depending on the sign of $\int_K u \dvg$, we consider the following two cases.

\noindent\textit{Case 1}. Suppose that $\int_K u \dvg \leqslant 0$. Then it is easy to see that
\[
\Big( \int_M u \dvg \Big)^2 \leqslant \Big( \int_{M \backslash K} u \dvg \Big)^2 \leqslant \vol(M \backslash K) \int_M u^2 \dvg.
\]
From this and \eqref{apd-eq2} we obtain the following
\begin{equation}\label{apd-eq3}
\frac {\vol(K)}{\vol(M)} \int_M u^2 \dvg \leqslant \frac 2{n\lambda_1} \Escr [u_0 ] - \frac 2{\lambda_1} \int_M Q_0 u \dvg.
\end{equation}
Applying Young's inequality gives
\[
\Big| \frac 2{\lambda_1} \int_M Q_0 u \dvg \Big| \leqslant \frac {2\vol(M)}{\lambda_1^2 \vol(K)} \int_M Q_0^2 \dvg + \frac { \vol(K)}{2 \vol(M)} \int_M u^2 \dvg.
\]
Hence, combining this and \eqref{apd-eq3} gives
\[
 \int_M u^2 \dvg \leqslant \frac {4\vol(M)}{n\lambda_1\vol(K)} \Escr [u_0 ] + \frac 4{\lambda_1^2} \frac{\vol(M)^2}{\vol(K)^2} \int_M Q_0^2 \dvg,
\]
which then yields
\begin{equation}\label{apd-eq4}
\Big( \int_M u \dvg \Big)^2 \leqslant \underbrace { \frac {4\vol(M)^2}{n\lambda_1\vol(K)} \Escr [u_0 ] }_{\leqslant \Escr [u_0 ]^2 + \big( \frac{2\vol(M)^2 }{ n \lambda_1 \vol(K)} \big)^2} + \frac 4{\lambda_1^2} \frac{\vol(M)^3}{\vol(K)^2} \int_M Q_0^2 \dvg.
\end{equation}
Thus we have just shown from \eqref{apd-eq4} that
\[
 \int_M u \dvg \leqslant \big| \Escr [u_0 ] \big| + \frac{2\vol(M)^2}{n \lambda_1 \vol(K)} + \frac 2{\lambda_1 } \frac{\vol(M)^{3/2}}{\vol(K)} \Big(\int_M Q_0^2 \dvg \Big)^{1/2}.
\]
This establishes \eqref{apd-eq1} in the first case.

\medskip
\noindent\textit{Case 2}. Suppose that $\int_K u \dvg > 0$. In this scenario, we first rewrite \eqref{apd-eq2} as follows
\[\begin{split}
\int_M u^2 \dvg \leqslant &\frac 2{n\lambda_1} \Escr [u_0 ] - \frac 2{\lambda_1} \int_M Q_0 u \dvg \\
& + \frac 1{\vol(M)} \Big(\Big( \int_K u \dvg \Big)^2 + \Big( \int_{M \backslash K} u \dvg \Big)^2\Big)\\
& + \frac 2{\vol(M)} \Big( \int_K u \dvg \Big) \Big( \int_{M \backslash K} u \dvg \Big).
\end{split}\]
By Young's inequality and the inequality $$\Big(\int_{M \backslash K} u \dvg \Big)^2 \leqslant \vol(M\backslash K) \int_M u^2 \dvg,$$ we obtain
\[\begin{split}
\frac 2{\vol(M)} \Big( \int_K u \dvg \Big) \Big( \int_{M \backslash K} u \dvg \Big) \leqslant &\frac{2 \vol(M \backslash K)}{\vol(K)\vol(M)} \Big( \int_K u \dvg \Big)^2 \\
&+ \frac{\vol(K)}{2\vol(M)} \Big( \int_M u^2 \dvg \Big).
\end{split}\]
Hence 
\[\begin{split}
\int_M u^2 \dvg \leqslant &\frac 2{n\lambda_1} \Escr [u_0 ] - \frac 2{\lambda_1} \int_M Q_0 u \dvg + \frac{\vol(K) + 2 \vol(M \backslash K)}{\vol(K)\vol(M)} \Big( \int_K u \dvg \Big)^2 \\
&+ \frac{\vol(K) + 2\vol(M\backslash K)}{2\vol(M)} \Big( \int_M u^2 \dvg \Big),
\end{split}\]
which implies
\[\begin{split}
\frac{\vol(K) }{2\vol(M)} \int_M u^2 \dvg \leqslant &\frac 2{n\lambda_1} \Escr [u_0 ] - \frac 2{\lambda_1} \int_M Q_0 u \dvg + \frac{2}{\vol(K) } \Big( \int_K u \dvg \Big)^2
\end{split}\]
Again applying Young's inequality gives
\[
\Big| \frac 2{\lambda_1} \int_M Q_0 u \dvg \Big| \leqslant \frac {4\vol(M)}{ \lambda_1^2 \vol(K)} \int_M Q_0^2 \dvg + \frac {\vol(K)}{4\vol(M)} \int_M u^2 \dvg.
\]
Therefore,
\[\begin{split}
 \int_M u^2 \dvg \leqslant &\frac 8{n\lambda_1} \frac{ \vol(M)}{\vol(K) } \Escr [u_0 ] + \frac {16\vol(M)^2}{ \lambda_1^2 \vol(K)^2} \int_M Q_0^2 \dvg + \frac{8\vol(M)}{\vol(K)^2 } \Big( \int_K u \dvg \Big)^2,
\end{split}\]
which yields
\begin{equation}\label{apd-eq5}
\begin{split}
 \Big( \int_M u \dvg \Big)^2 \leqslant & \overbrace { \frac 8{n\lambda_1} \frac{ \vol(M)^2}{\vol(K) } \Escr [u_0 ] }^{\leqslant \Escr [u_0 ]^2 + \big( \frac{4\vol(M)^2 }{ n \lambda_1 \vol(K)} \big)^2} \\
&+ \frac {16\vol(M)^3}{ \lambda_1^2 \vol(K)^2} \int_M Q_0^2 \dvg + \frac{8\vol(M)^2}{\vol(K)^2 } \Big( \int_K u \dvg \Big)^2 .
\end{split}
\end{equation}
Thus we have just shown from \eqref{apd-eq5} that
\[\begin{split}
 \int_M u \dvg \leqslant \Escr [u_0 ] &+ \frac{4\vol(M)^2}{n \lambda_1 \vol(K)} + \frac 4{\lambda_1 } \frac{\vol(M)^{3/2}}{\vol(K)} \Big(\int_M Q_0^2 \dvg \Big)^{1/2}\\
& + \frac{2\sqrt 2\vol(M) }{\vol(K) } \Big( \int_K u \dvg \Big).
\end{split}\]
This establishes \eqref{apd-eq1} in the second case.

\noindent\textbf{Step 2}. By the definition of $\Escr$ and Lemma \ref{energydecay}, there holds
\[
\int_M \Po u \cdot u \dvg \leqslant \frac 2n\Escr [u_0 ] - 2 \Big(\int_M Q_0 \dvg \Big) \overline u- 2 \int_M Q_0 (u - \overline u) \dvg .
\]
By making use of Young's inequality and Poincar\'e's inequality \eqref{eqPositivityOfP_0}, we obtain
\[\begin{split}
\int_M \Po u \cdot u \dvg \leqslant & \frac 2n\Escr [u_0 ] - 2 \Big(\int_M Q_0 \dvg \Big) \overline u \\
&+ \frac 2{\lambda_1} \Big(\int_M Q_0^2\dvg \Big) +\frac {\lambda_1}2\Big( \int_M (u - \overline u)^2 \dvg \Big)\\
\leqslant & \frac 2n\Escr [u_0 ] - 2 \Big(\int_M Q_0 \dvg \Big) \overline u\\
& +\frac 2{\lambda_1} \Big(\int_M Q_0^2\dvg \Big) + \frac 12 \int_M \Po u \cdot u \dvg .
\end{split}\]
Thus, we have just proved that
\[\begin{split}
\frac 12\int_M \Po u \cdot u \dvg \leqslant & \frac 2n\Escr [u_0 ] - 2 \Big(\int_M Q_0 \dvg \Big) \overline u +\frac 2{\lambda_1} \Big(\int_M Q_0^2\dvg \Big) .
\end{split}\]
Thanks to \eqref{initialvolume}, we can estimate
\[
\begin{split}
 \int_Me^{nu }\dvg \leqslant & \Cscr_A \exp\Big(\frac{n}{2(n-1)!\vol(\mathbb S^n)}\int_Mu \cdot \Po u \dvg + \frac n{\vol(M)} \int_Mu \dvg \Big) \\
\leqslant & \Cscr_A \exp
\left[
\begin{split}
&\frac{2}{ (n-1)!\vol(\mathbb S^n)} \Escr [u_0 ] \\
& + \frac n{\vol(M)} \Big(1 - \frac{2 }{ (n-1)!\vol(\mathbb S^n) } \int_M Q_0 \dvg \Big) \Big( \int_Mu \dvg \Big) \\
& + \frac{n}{ (n-1)!\vol(\mathbb S^n)}\frac 2{\lambda_1} \Big(\int_M Q_0^2\dvg \Big)
\end{split}
\right] .
\end{split} 
\]
Note that
\[
\Escr [u_0 ] \leqslant \frac n2 \|u_0\|_{H^{n/2}(M)}^2 + \int_M Q_0^2\dvg.
\]
Therefore, we can further estimate $\int_Me^{nu }\dvg$ as follows
\begin{equation}\label{apd-eq6}
\begin{split}
 \int_Me^{nu }\dvg \leqslant & \Cscr_A \exp
\left[
\begin{split}
&\frac{n}{ (n-1)!\vol(\mathbb S^n)} \|u_0\|_{H^{n/2}(M)}^2 \\
& +\frac{2}{ (n-1)!\vol(\mathbb S^n)}\Big( 1 + \frac n{\lambda_1} \Big) \int_M Q_0^2\dvg \\
 &+ \frac n{\vol(M)} \Big(1 - \frac{2 }{ (n-1)!\vol(\mathbb S^n) } \int_M Q_0 \dvg \Big) \Big( \int_Mu \dvg \Big) 
\end{split}
\right]\\
\leqslant & C \exp \Big( \frac{n}{ (n-1)!\vol(\mathbb S^n)} \|u_0\|_{H^{n/2}(M)}^2 + B \int_Mu \dvg \Big).
\end{split}
\end{equation}
Here $B$ and $C$ are positive constants depending only on $(M,g_0)$. Combining \eqref{apd-eq1} and \eqref {apd-eq6} gives
\begin{equation*}
\begin{split}
 \int_Me^{nu }\dvg \leqslant & C'_K \exp \Big( A_1 \|u_0\|_{H^{n/2}(M)}^2 + \frac {B_1}{\vol(K)} \max \Big\{ \int_K u \dvg , 0 \Big\} \Big).
\end{split}
\end{equation*}
Again $A_1$ and $B_1$ are positive constants depending on $(M,g_0)$ and $C'_K>0$ depends on $(M,g_0)$ and $\vol(K)$. Clearly, we may assume that $B_1 > n$ by replacing it with $\max \{n+1, B_1\}$, if necessary. Therefore, by letting $\alpha = \max\{A_1, B_1/n\}>1$, we deduce that
\begin{equation}\label{apd-eq8}
\begin{split}
 \int_Me^{nu }\dvg \leqslant & C'_K \exp \big( \alpha \|u_0\|_{H^{n/2}(M)}^2 \big) \exp \Big( \frac {\alpha}{\vol(K)} \max \Big\{ \int_K nu \dvg , 0 \Big\} \Big).
\end{split}
\end{equation}
By Jensen's inequality, we have
\[
\exp \Big( \frac {\alpha}{\vol(K)} \int_K nu \dvg \Big) \leqslant \Big( \frac 1{\vol(K)} \int_K e^{nu} \dvg\Big)^\alpha.
\]
From this we know that
\begin{equation}\label{apd-eq9}
\begin{split}
\exp \Big( \frac {\alpha}{\vol(K)} \max \Big\{ \int_K nu \dvg , 0 \Big\} \Big) = &\max \Big\{ \exp \Big( \frac {\alpha}{\vol(K)} \int_K nu \dvg \Big), 1 \Big\} \\
\leqslant & \max \Big\{ \Big( \frac 1{\vol(K)} \int_K e^{nu} \dvg\Big)^\alpha ,1 \Big\} .
\end{split}
\end{equation}
Plugging \eqref{apd-eq9} into \eqref{apd-eq8} gives
\[\begin{split}
 \int_Me^{nu }\dvg \leqslant & C'_K \exp \big( \alpha \|u_0\|_{H^{n/2}(M)}^2 \big) \\
 & \times \max \Big\{ \Big( \frac 1{\vol(K)} \Big)^\alpha ,1 \Big\} \max \Big\{ \Big( \int_K e^{nu} \dvg\Big)^\alpha ,1 \Big\} \\
= & C_K \exp \big( \alpha \|u_0\|_{H^{n/2}(M)}^2 \big) \max \Big\{ \Big( \int_K e^{nu} \dvg\Big)^\alpha ,1 \Big\},
\end{split}\]
which is the desired estimate \eqref{apd-eq0}. The proof is complete.
\qed

\subsection{Proof of Lemma \ref{concentrationcompactness}}

To prove the lemma, we follow the idea in \cite{bfr-04, bfr} with some modifications. As shown in \cite[Proposition 6]{br06} and in \cite[page 939]{ChenXu-2011}, for each smooth positive function $u(t)$, there exist some point $p(t)\in \mathbb B^{n+1}$ such that the normalized companion of $u(t)$ defined by
\[
w_{p(t)} =u(t) \circ\phi_{p(t)}+\frac1n\log(\det d\phi_{p(t)}),
\]
where $\phi_{p(t)} : \mathbb S^n \to \mathbb S^n$ is the conformal diffeomorphism given by
\[
\phi_{p} (x) = \frac 1{1+2\langle p, x\rangle + |p|^2} \big[ (1-|p|^2) x + 2(1+\langle p, x\rangle )p \big] 
\]
with $p \in \mathbb B^{n+1}$, enjoys the following
\begin{equation}\label{normalizedcondition}
\int_{\mathbb S^n} x_j e^{nw_{p(t)}}\dvSn =0
\end{equation}
for all $j=1,2,...,n+1$. Furthermore, if $u(t)$ depends smoothly on the time $t$, then so does $p(t)$. By conformal invariance there holds
\[
\Escr[w_{p(t)}]=\Escr[u(t)];
\]
see for instance \cite[page 940]{ChenXu-2011}. To emphasize that we are working on $\mathbb S^n$, let us denote $\Po$ by $ \PSn $. Then by Lemma \ref{energydecay} and the preceding identity, we have 
\begin{equation}\label{normalizeenergy}
\frac{n}{2}\int_{\mathbb S^n}w_{p(t)} \cdot \PSn w_{p(t)} \dvSn +n!\int_{\mathbb S^n}w_{p(t)} \dvSn \leqslant \Escr[u_0],
\end{equation}
which implies that
\begin{equation}
 \label{integralofwupperbound}
 \int_{\mathbb S^n}w_{p(t)}\dvSn \leqslant\frac{\Escr[u_0]}{n!}.
\end{equation}
Since $u_0\in Y$, we obtain
\begin{equation*}
 \int_{\mathbb S^n}f\circ \phi_{p(t)} e^{nw_{p(t)}}\dvSn =\int_{\mathbb S^n}fe^{nu(t)} \dvSn =(n-1)!\vol(\mathbb S^n).
\end{equation*}
In particular, this gives
\begin{equation}
 \label{wvolume}
 \int_{\mathbb S^n}e^{nw_{p(t)}}\dvSn \geqslant \frac{(n-1)!\vol(\mathbb S^n)}{\sup_{\mathbb S^n}f}.
\end{equation}
Since $w_{p(t)}$ satisfies \eqref{normalizedcondition}, we can apply an improved Beckner inequality due to Wei and Xu in \cite[Theorem 2.6]{wx1998} and use \eqref{normalizeenergy} and \eqref{wvolume} to get
\begin{equation}\label{wvolumeupperbound}
\begin{split}
 \frac{(n-1)!}{\sup_{ \mathbb S^n}f} & \leqslant\frac{1}{\vol(\mathbb S^n)}\int_{\mathbb S^n}e^{nw_{p(t)}}\dvSn \\
 & \leqslant\exp\Big[\frac{1}{(n-1)!\vol(\mathbb S^n)}\Big(\frac{an}{2}\int_{\mathbb S^n}w_{p(t)} \cdot \PSn w_{p(t)}\dvSn +n!\int_{\mathbb S^n}w_{p(t)}\dvSn \Big)\Big]\\
 & \leqslant\exp\Big[\frac{n(a-1)}{2(n-1)!\vol(\mathbb S^n)}\int_{\mathbb S^n}w_{p(t)} \cdot \PSn w_{p(t)}\dvSn +\frac{\Escr[u_0]}{(n-1)!\vol(\mathbb S^n)}\Big]
\end{split}
\end{equation}
for some $0<a<1$. From this it follows that there exists a uniform constant $C_1>0$ such that
\begin{equation}
 \label{wPwupperbound}
 \int_{\mathbb S^n}w_{p(t)} \cdot \PSn w_{p(t)} \dvSn \leqslant C_1 .
\end{equation}
Plugging \eqref{wPwupperbound} into \eqref{wvolumeupperbound} yields
\begin{equation}
 \label{integralofwlowerbound}
 \int_{\mathbb S^n}w_{p(t)} \dvSn \geqslant \frac{\vol(\mathbb S^n)}{n}\log \Big(\frac{(n-1)!}{\sup_{ \mathbb S^n}f} \Big)-\frac{aC_1}{2(n-1)!}.
\end{equation}
Combining \eqref{integralofwupperbound} and \eqref{integralofwlowerbound}, we obtain that there exists a uniform constant $C_2>0$ such that
\begin{equation}
 \label{integralofwbound}
 \Big|\int_{\mathbb S^n}w_{p(t)} \dvSn \Big|\leqslant C_2,
\end{equation}
It follows from \eqref{wPwupperbound}, \eqref{integralofwbound}, and Poincare's inequality \eqref{eqPositivityOfP_0} that
\begin{equation*}
 \int_{\mathbb S^n}w^2_{p(t)} \dvSn \leqslant C_3
\end{equation*}
for some uniform constant $C_3>0$, which together with \eqref{wPwupperbound} implies that there exists a uniform constant $C_4>0$ such that
\begin{equation}\label{wh2nbound}
\| w_{p(t)} \|_{H^{n/2}(\mathbb S^n)} \leqslant C_4.
\end{equation}
It is worth noticing that all constant $C_i$ with $1 \leqslant i \leqslant 4$ are independent of $T$. Now depending on the size of $\sup_{t \in [0,T)} |p(t)|$, we have two possibilities:

\medskip
\noindent\textbf{Case 1}. Suppose that $\sup_{t\in [0,T)} |p(t)| < 1$. We shall prove that Part (i) of the lemma occurs. Indeed, it follows from \cite[page 943]{ChenXu-2011} that
\[
\det(d\phi_p^{-1}(x) ) = \Big( \frac{1-|p|^2}{1-2\langle p, x \rangle + |p|^2} \Big)^n \quad x \in \mathbb S^n,
\]
which yields
\[
\Big( \frac{1-|p|}{1+|p|} \Big)^n \leqslant\det(d\phi_p^{-1} )\leqslant \Big( \frac{1+|p|}{1-|p|} \Big)^n.
\]
Therefore, under our hypothesis, there is a constant $C \gg 1$ independent of $T$ such that
\[
0<C^{-1} \leqslant\det(d\phi_{p(t)}) \leqslant C.
\]
By Beckner's inequality \eqref{BecknerInequalityWithHnorm} and \eqref{wh2nbound}, there holds
\[
\sup_{[0,T)} \int_{\mathbb S^n} e^{n   |w(t)|} \dvSn \leqslant C_3 .
\]
Now we pass our bound for $w_{p(t)}$ in \eqref{wh2nbound} to a similar bound for $u(t)$. To this purpose, we first note that
\[\begin{aligned}
\int_{\mathbb S^n} e^{n  |u(t)|} \dvSn 
&=\int_{\mathbb S^n} \exp \big( n  |u(t) \circ \phi_{p(t)} | +  \log (\det d\phi_{p(t)} ) \big) \dvSn \\
&\leqslant\int_{\mathbb S^n} \exp \big( n  | w_{p(t)}  | +  2\log (\det d\phi_{p(t)} ) \big) \dvSn \\
&= \int_{\mathbb S^n} e^{n  |w_{p(t)}|} |\det d\phi_{p(t)}  |^2 \dvSn.
\end{aligned}\]
From this and the bounds for $\det(d\phi_{p(t)})$, we deduce that
\[
\sup_{[0,T)} \int_{\mathbb S^n} e^{n  |u(t)|} \dvSn \leqslant C_4 .
\]
In particular, applying Jensen's inequality gives
\begin{equation}
\int_{\mathbb S^n}|u(t)|\dvSn \leqslant C_5,
\label{uintegralbound}
\end{equation}
which together with Lemma \ref{energydecay} implies that
\begin{equation}
\int_{\mathbb S^n}u \cdot \PSn u\dvSn \leqslant C.
\label{uPsnubound}
\end{equation}
Now it follows from Poincar\'e's inequality, \eqref{uintegralbound}, and \eqref{uPsnubound} that
\[
\|u(t)\|_{H^{n/2}(\mathbb S^n)} \leqslant C
\]
for some constant $C>0$ independent of $T$.

\noindent\textbf{Case 2}. Suppose that $\sup_{t\in [0,T)} |p(t)| = 1$.
In this scenario, we can find a sequence $t_k\nearrow T$ in such a way that $|p(t_k)|\nearrow 1$ and that $p(t_k) \to x_\infty$ as $k \to +\infty$ for some point $x_\infty\in \mathbb S^n$. In view of \eqref{wh2nbound}, there exists a subsequence of $t_k$, still denoted by $t_k$, and some function $w_\infty\in H^{n/2}(\mathbb S^n)$ such that 
\[
w_k = : w_{p(t_k)} \rightharpoonup w_\infty
\]
weakly in $H^{n/2}(\mathbb S^n)$ and strongly in $L^2(\mathbb S^n)$. Let $r>0$ be arbitrary but fixed and set 
\[
B_k=(\phi_{p(t_k)})^{-1}(B_r(x_\infty)).
\] 
Then by H\"older's inequality, we get
\begin{equation} \label{fphitkonBkc}
\begin{split}
\Big|\int_{\mathbb S^n\backslash B_k}f\circ\phi_{p(t_k)} e^{nw_k}\dvSn \Big|&\leqslant (\sup_{\mathbb S^n} |f| )\, \Big(\vol(\mathbb S^n\backslash B_k)\int_{\mathbb S^n}e^{2n|w_k|}\dvSn \Big)^{1/2} \\
&\leqslant C \sqrt{\vol (\mathbb S^n\backslash B_k)}.
\end{split}
\end{equation}
To obtain the last inequality in \eqref{fphitkonBkc}, we have used \eqref{wh2nbound} and Beckner's inequality \eqref{BecknerInequalityWithHnorm}. Notice that, one can easily verify that as $k \to +\infty$ there holds
\[
\phi_{p(t_k)} \to \phi_{x_\infty} \equiv x_\infty
\]
uniformly in $\mathbb S^n \setminus B_\delta(x_\infty)$ with any sufficiently small $\delta>0$. Next, we need to estimate $\vol(B_k)$ for large $k$.

\smallskip\noindent\textbf{Claim}. There holds 
\[
\lim_{k \to +\infty} \vol(B_k)=\vol(\mathbb S^n).
\]
\begin{proof}[Proof of Claim]

For each $\varepsilon >0$ sufficiently small but fixed, because $\phi_{p(t_k)} \to x_\infty$ uniformly in $\mathbb S^n \setminus B_\varepsilon(x_\infty)$, there is some $N>1$ independent of $x$ such that
\[
d \big( \phi_{p(t_k)} (x) , x_\infty \big) < r
\]
for all $k \geqslant N$ and all $x \in \mathbb S^n \setminus B_\varepsilon(x_\infty)$. In particular, there holds
\[
 \mathbb S^n \setminus B_\varepsilon(x_\infty) \subset \phi_{p(t_k)} ^{-1} \big( B_r(x_\infty) \big) = B_k
\]
for all $k \geqslant N$. Thus, we have just shown that
\[
 \mathbb S^n \setminus B_\varepsilon(x_\infty) \subset B_k \subset \mathbb S^n,
\]
for large $k$, which immediately implies
\[
\vol( \mathbb S^n \setminus B_\varepsilon(x_\infty) ) \leqslant \liminf_{k \to +\infty} \vol(B_k) \leqslant \limsup_{k \to +\infty} \vol(B_k) \leqslant \vol(\mathbb S^n).
\]
Letting $\varepsilon \searrow 0$ gives the desired result.
\end{proof}

As a consequence of the above Claim, we get
\[
\lim_{k \to +\infty}\vol (\mathbb S^n\backslash B_k)=0.
\] 
Keep in mind that
\[
\int_{B_r(x_\infty)}f\ e^{nu(t_k)}\dvSn = \int_{B_k}f\circ\phi_{p(t_k)} e^{nw_k}\dvSn.
\]
This fact together with \eqref{fphitkonBkc} and the fact $u(t)\in Y$ implies that
\[
\lim_{k \to +\infty}\int_{B_r(x_\infty)}fe^{nu(t_k)}\dvSn =(n-1)!\vol(\mathbb S^n)
\]
for all $r>0$. This establishes \eqref{concentration}. Now, let $y\in \mathbb S^n\backslash\{x_\infty\}$ and $0\leqslant r<\dist (y,x_\infty)$. Then we can choose some $0< s < \dist (y,x_\infty) - r$ and fix it. By the argument leading to \eqref{concentration}, we know that
\[
\lim_{k \to +\infty} \vol (\mathbb S^n \setminus (\phi_{p(t_k)})^{-1}(B_s(x_\infty))) = 0.
\]
However, we have $B_s(x_\infty) \cap B_r(y) = \emptyset$, which implies that 
\[
\phi_{p(t_k)}^{-1}(B_r(y)) \cap \phi_{p(t_k)}^{-1}(B_s (x_\infty))= \emptyset.
\] 
Hence, there holds
\[
\lim_{k \to +\infty}\vol \big( \phi_{p(t_k)}^{-1}(B_r(y)) \big)=0.
\]
However, as in \eqref{fphitkonBkc}, we can estimate
\[\begin{split}
\Big| \int_{B_r(y)}fe^{nu(t_k)}\dvSn \Big| = & \Big| \int_{\phi_{p(t_k)}^{-1}(B_r(y))}f\circ\phi_{p(t_k)} e^{nw_k}\dvSn \Big| \\
\leqslant & (\sup_{\mathbb S^n} |f| ) \Big( \int_{\mathbb S^n} e^{2n|w_k|}\dvSn \Big)^{1/2} \Big( \int_{\phi_{p(t_k)}^{-1}(B_r(y))} \dvSn \Big)^{1/2}.
\end{split}\]
Keep in mind that, by Beckner's inequality \eqref{BecknerInequalityWithHnorm}, the term $\int_{\mathbb S^n} e^{2n|w_k|}\dvSn$ is uniformly bounded. Therefore, we get
\[
\lim_{k \to +\infty}\int_{B_r(y)}fe^{nu(t_k)}\dvSn =0.
\]
This establishes \eqref{concentration1}. Hence Part (ii) of the lemma is proved. \qed

\section{An alternative proof of Ge--Xu's result}
\label{apd-GeXu}

In this appendix, we provide an alternative proof of \cite[Theorem 3.2]{gx2008}. Using our notation and conventions, we shall reprove the following result.

\begin{theorem}[Ge and Xu]\label{thmGeXu}
Let $(M, g_0)$ be a compact, oriented $n$-dimensional Riemannian manifold with $n$ even. Assume that the GJMS operator $\Po $ is positive with kernel consisting of constant functions. Moreover, assume that the metric $g_0$ satisfies
\[
\int_M Q_0 \dvg = 0.
\]
If $f$ is a smooth function on $M$ such that $\int_M f \dvg<0$, then there exists a conformal metric $g \in [g_0]$ such that $Q_g = f$.
\end{theorem}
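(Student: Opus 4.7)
The plan is to reduce the theorem to the flow machinery of Theorem~\ref{main} and then to pin down the sign of the resulting Lagrange-type multiplier $\lambda_\infty$ using the hypothesis $\int_M f \dvg < 0$. Because $\Po$ is positive with kernel consisting of constants and $\int_M Q_0 \dvg = 0$, the Fredholm alternative yields $w \in C^\infty(M)$ with $\Po w = -Q_0$; the metric $\tilde g_0 = e^{2w} g_0 \in [g_0]$ then satisfies $Q_{\tilde g_0} = e^{-nw}(\Po w + Q_0) = 0$ everywhere. After replacing $g_0$ by $\tilde g_0$ one may assume $Q_0 \equiv 0$ from the outset. The hypothesis $\int_M f \dvg < 0$ combined with $f \not\equiv 0$ forces $\inf f < 0$, while the non-emptiness of $Y$ forces $\sup f > 0$, so Theorem~\ref{main} case (ii) applies.

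Taking any $u_0 \in Y$ and running the flow \eqref{eqFlow} produces a smooth limit $u_\infty$ with $\Po u_\infty = \lambda_\infty f e^{n u_\infty}$; the argument of Corollary~\ref{maincorollary}(ii) rules out $\lambda_\infty = 0$, since otherwise $u_\infty$ would be constant, incompatible with $u_\infty \in Y$ when $\int_M f \dvg \neq 0$. Pairing the limiting equation with $u_\infty$ and using $Q_0 \equiv 0$ yields the identity
\[
0 < \Escr[u_\infty] = \frac{n}{2} \int_M u_\infty \cdot \Po u_\infty \dvg = \frac{n \lambda_\infty}{2} \int_M u_\infty \, f \, e^{n u_\infty} \dvg,
\]
so $\lambda_\infty$ shares its sign with $\int_M u_\infty f e^{n u_\infty} \dvg$, and the whole problem reduces to showing that this integral is strictly positive.

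To achieve this, my plan is to realize $u_\infty$ (up to a constant shift) as a constrained minimizer of $\Escr$ on $Y$. Existence of a minimizer $u_*$ follows from the direct method applied via the Adams--Trudinger inequality \eqref{eqAdamsTypeInequality}; its Euler--Lagrange equation reads $\Po u_* = \mu f e^{n u_*}$ for a Lagrange multiplier $\mu$, and choosing $u_0 := u_*$ makes the flow stationary, hence $\lambda_\infty = \mu$. Introduce the value function
\[
\Psi(c) := \inf \Big\{ \Escr[u] : u \in H^{n/2}(M,g_0),\ \overline u = 0,\ \int_M f e^{nu} \dvg = c \Big\}.
\]
Then $\Psi \geq 0$ (since $Q_0 \equiv 0$ and $\Po \geq 0$), $\Psi(c_0) = 0$ at $c_0 := \int_M f \dvg < 0$ (attained by $u \equiv 0$), and $\Psi(0) > 0$ (constants do not lie in the $c = 0$ constraint set). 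By the envelope theorem one has $\mu = \Psi'(0)$, and a local Hessian analysis of $\Psi$ at its interior minimum $c_0$ (the quadratic form controlling the linearized problem $\Po h = \nu f$ at $u \equiv 0$), combined with the strict inequality $\Psi(0) > \Psi(c_0) = 0$, forces $\Psi'(0) > 0$.

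Once $\mu > 0$ is secured, the rescaled metric $g = e^{2(u_* + n^{-1}\log \mu)} g_0 \in [g_0]$ satisfies $Q_g = f$ directly from the conformal change rule \eqref{qcurvartureequation}, completing the proof. The principal obstacle is the last step: the constraint set $\{\int f e^{nu} = c\}$ is non-convex in $u$, so $\Psi$ need not be a convex function of $c$ globally, and controlling $\Psi'$ throughout the whole interval $[c_0, 0]$ requires a careful quantitative study of the minimizing family joining the trivial solution at $c_0$ to a non-trivial minimizer at $c = 0$. This is precisely where the hypothesis $\int_M f \dvg < 0$ (i.e.\ $c_0 < 0$) enters and cannot be dispensed with; without it, $\Psi$ might have its minimum at $c = 0$ and the Lagrange multiplier could vanish or flip sign.
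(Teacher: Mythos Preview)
The reduction to $Q_0 \equiv 0$ and the observation that the multiplier shares the sign of $\int_M u_\infty f e^{nu_\infty} \dvg$ are both correct and match the paper. However, there is a genuine gap at the decisive step where you claim $\mu = \Psi'(0) > 0$. Your argument is that $\Psi(c_0) = 0 < \Psi(0)$ with $c_0 < 0$, and that a ``local Hessian analysis at $c_0$'' combined with this strict inequality forces $\Psi'(0) > 0$. This does not follow: knowing that $\Psi$ has a strict minimum at $c_0 < 0$ and that $\Psi(0) > 0$ says nothing about the sign of $\Psi'(0)$, since $\Psi$ could overshoot and be decreasing near $c = 0$. You acknowledge this yourself (``$\Psi$ need not be convex\dots controlling $\Psi'$ throughout the whole interval requires a careful quantitative study''), but no such study is supplied, so the proof is incomplete exactly where the sign of the multiplier is determined. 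Note also that the flow does no work in your argument: once you appeal to the direct method to produce a minimizer $u_*$ and set $u_0 = u_*$, the flow is stationary and Theorem~\ref{main} could have been omitted entirely.

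The paper bypasses the envelope-theorem route with a short scaling trick that exploits $Q_0 \equiv 0$ (so that $\Escr[\lambda u] = \lambda^2 \Escr[u]$). After minimizing $\Escr$ over $\mathscr H = \{u : \int_M f e^{nu} = 0,\ \int_M e^{nu} = \vol(M)\}$ to obtain $u_\infty$ with $\Po u_\infty = \beta f e^{nu_\infty}$, one enlarges the constraint set to $\widetilde{\mathscr H}$ allowing $\int_M f e^{nu} \geqslant 0$ (with a relaxed volume bound) and checks that the infimum does not drop: any new interior critical point would be constant, contradicting $\int_M f \dvg < 0$. If $\beta < 0$, then $\int_M f e^{nu_\infty} u_\infty \dvg < 0$, so $l(t) = \int_M f e^{nt u_\infty} \dvg$ satisfies $l(1) = 0$ and $l'(1) < 0$; hence for some $\lambda$ slightly below $1$ one has $l(\lambda) > 0$, placing $\lambda u_\infty$ in $\widetilde{\mathscr H}$ with energy $\lambda^2 \Escr[u_\infty] < \Escr[u_\infty]$, contradicting minimality. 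This pins down $\beta > 0$ without any differentiability or convexity of a value function, and is the missing idea in your proposal.
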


To prove Theorem \ref{thmGeXu} above, we also use variational techniques; however, the difference between our approach and that of \cite{gx2008} is the constraint. Thanks to the resolution of the problem of prescribing constant $Q$-curvature \cite{n2007}, we can assume at the beginning that $g_0$ has $Q_0 \equiv 0$. From this fact, to conclude Theorem \ref{thmGeXu}, it is equivalent to solving
\begin{equation}\label{eqPDE}
\Po u =f e^{nu}
\end{equation}
for $u$ under the condition $\int_M f \dvg< 0$.

\noindent\textbf{Step 1}. Inspired by Lemma \ref{volumepreserve}, we let
\begin{equation}\label{eqConstraint}
\mathscr H = \Big\{ u \in H^{n/2} (M) : \int_M f e^{nu} \dvg = 0, \int_M e^{nu} \dvg = \vol(M) \Big\}.
\end{equation}
As can be easily observed, the condition $\int_M u \dvg= 0$ in \cite{gx2008} is replaced by the condition $\int_M e^{nu} \dvg = \vol(M)$ in \eqref{eqConstraint}.

As always, we shall minimize the following functional energy
\begin{equation*}
\Escr [u]= \int_Mu\cdot \Po u\dvg
\end{equation*}
in $\mathscr H$. Following the method in \cite{gx2008}, it is not hard to see that the set $\mathscr H$ is non-empty, thanks to $\int_M f \dvg < 0$. Since $\mathscr H$ is non-empty and $\Escr$ is bounded from below by $0$, we know that
$$\mu = \inf_{u \in \mathscr H} \Escr(u)$$
is finite and non-negative. As in \eqref{eqNormH^n/2}, we still use
\[
\| u\| _{H^{n/2}(M)} = \Big( \int_Mu\cdot \Po u\dvg +\int_Mu^2\dvg \Big)^{1/2}
\]
as an equivalent norm on $H^{n/2} (M)$. Suppose that $(u_k)_k \subset \mathscr H$ is a minimizing sequence for $\mu$. Clearly, $\int_Mu_k\cdot \Po u_k\dvg$ is bounded. Hence, to bound the sequence $(u_k)_k$ in $H^{n/2} (M)$, it suffices to bound $\int_M u_k^2 \dvg$. However, this can be easily obtained, in the same fashion of Lemma \ref{lem-BoundInH^{n/2}-zero}, since we can easily bound $\int_M u_k \dvg$. Therefore, up to subsequences, there exists some $u_\infty \in H^{n/2} (M)$ such that
\begin{itemize}
 \item $u_k \rightharpoonup u_\infty$ weakly in $H^{n/2} (M)$ and
 \item $u_k \to u_\infty$ almost everywhere in $M$
\end{itemize}
as $k \to +\infty$. Clearly,
\[
\Escr [u_\infty] \leqslant \liminf_{k \to +\infty} \Escr [u_k].
\]
From this we conclude that $\Escr [u_\infty] = \mu$ and that $u_\infty$ is an optimizer for $\mu$ since $u_\infty \in \mathscr H$, thanks to Trudinger's inequality. By Lagrange's multiplier theorem, there exist two constants $\alpha$ and $\beta$ such that
\begin{equation}\label{eqAfterLagrange}
\int_M \varphi \cdot \Po u_\infty \dvg = \alpha \int_M e^{nu_\infty} \varphi \dvg + \beta \int_M f e^{nu_\infty} \varphi \dvg
\end{equation}
for any test function $\varphi \in H^{n/2} (M)$. Testing \eqref{eqAfterLagrange} with $\varphi \equiv 1$ gives $\alpha = 0$ and hence $\beta \ne 0$.

\medskip\noindent\textbf{Step 2}. To realize that our equation \eqref{eqPDE} has a solution, it is necessary to prove that $\beta>0$. To this purpose, we minimize $\Escr$ over the set
\begin{equation*}
\widetilde{\mathscr H} = \Big\{ u \in H^{n/2} (M) : \int_M f e^{nu} \dvg \geqslant 0, \frac {\vol(M)}2\leqslant \int_M e^{nu} \dvg \leqslant \frac {3\vol(M)}2 \Big\}.
\end{equation*}
Set
$$\widetilde\mu = \inf_{u \in \widetilde{\mathscr H} } \Escr(u).$$
Because $\mathscr H \subset \widetilde{\mathscr H} $ we clearly have $\widetilde\mu \leqslant \mu$. Using the same argument as before, there exists some function $\widetilde u_\infty \in \widetilde{\mathscr H}$ such that $\widetilde\mu = \Escr(\widetilde u_\infty)$. (Note that the positive boundedness away from zero of $\int_M e^{nu} \dvg$ will be enough to guarantee an upper bound for $\overline u$; see Lemma \ref{lem-BoundInH^{n/2}-zero}.) Suppose that $\widetilde\mu < \mu$, again by Lagrange's multiplier theorem, there holds
\begin{equation}\label{eqAfterLagrange2}
\int_M \varphi \cdot \Po \widetilde u_\infty \dvg = 0
\end{equation}
for any $\varphi \in H^{n/2} (M)$. By testing \eqref{eqAfterLagrange2} with $\varphi \equiv \widetilde u_\infty$ we deduce that $ \widetilde u_\infty$ is constant. From this we obtain a contradiction because $\widetilde u_\infty \in \widetilde{\mathscr H} \backslash \mathscr H$ and
\[
0 \leqslant \int_M f e^{n \widetilde u_\infty} \dvg = e^{n\widetilde u_\infty} \int_M f \dvg < 0.
\]
Thus, we have just shown that $\widetilde\mu =\mu$. By contradiction, suppose that $\beta <0$. Then by \eqref{eqAfterLagrange}, we have
\[
\Escr [u_\infty] = \mu = \beta \int_M f e^{nu_\infty} u_\infty \dvg ,
\]
which implies that $ \int_M f e^{nu_\infty} u_\infty \dvg<0$. Set
\[
l(t) = \int_M f e^{n t u_\infty} \dvg.
\]
Clearly, $l(1) = 0$ and $l'(1)<0$. Hence there exists some $\lambda \in (0,1)$ closed to $1$ such that $l(\lambda )>0$ and
\[
\frac {\vol(M)}2\leqslant \int_M e^{n\lambda u_\infty} \dvg \leqslant \frac{3\vol(M)}2 .
\]
In other words, we conclude that $\lambda u_\infty \in \widetilde{\mathscr H}$ which implies that
\[
\widetilde\mu \leqslant \Escr [\lambda u_\infty] = \lambda^2 \Escr [u_\infty] < \Escr [u_\infty] = \widetilde\mu.
\]
This contradiction shows $\beta>0$ as we wish.

\medskip
\noindent\textbf{Step 3}. Once we can show that $\beta>0$, it is immediate to verify that $u_\infty + (\log \beta ) /n$ solves \eqref{eqPDE}. The proof is complete.



\end{document}